\documentclass[letterpaper,11pt,oneside,reqno]{amsart}


\usepackage{amsmath,amssymb,amsthm,amsfonts}
\usepackage{hyperref}
\usepackage{graphicx,color,colortbl}
\usepackage{upgreek}
\usepackage[mathscr]{euscript}

\allowdisplaybreaks
\numberwithin{equation}{section}

\usepackage{tikz}
\usetikzlibrary{
	shapes,
	arrows,
	positioning,
	decorations.markings,
	decorations.pathmorphing
}

\usepackage{array}
\usepackage{adjustbox}
\usepackage{cleveref}
\usepackage{enumerate}

\usepackage[DIV=13]{typearea}

\usepackage[shadow]{todonotes}
\synctex=1

\renewcommand{\Re}{\mathop{\mathrm{Re}}}
\renewcommand{\Im}{\mathop{\mathrm{Im}}}


\newtheorem{proposition}{Proposition}[section]
\newtheorem{lemma}[proposition]{Lemma}
\newtheorem{corollary}[proposition]{Corollary}
\newtheorem{theorem}[proposition]{Theorem}
\theoremstyle{definition}
\newtheorem{definition}[proposition]{Definition}
\newtheorem{remark}[proposition]{Remark}

\begin{document}
\title{PushTASEP in inhomogeneous space}
\author{Leonid Petrov}

\date{}

\maketitle

\begin{abstract}
We consider the PushTASEP (pushing totally asymmetric simple exclusion process, also sometimes called long-range TASEP) with the step initial configuration evolving in an inhomogeneous space. That is, the rate of each particle's jump depends on the location of this particle. We match the distribution of the height function of this PushTASEP with Schur processes. Using this matching and determinantal structure of Schur processes, we obtain limit shape and fluctuation results which are typical for stochastic particle systems in the Kardar-Parisi-Zhang universality class. PushTASEP is a close relative of the usual TASEP. In inhomogeneous space the former is integrable, while the integrability of the latter is not known.
\end{abstract}

\section{Introduction and main results}
\label{sec:intro}

\subsection{Overview}

The totally asymmetric simple exclusion process
(TASEP) was introduced 
about 50 years ago, independently in 
biology \cite{macdonald1968bioASEP},
\cite{MacdonaldGibbsASEP1969}
and probability theory
\cite{Spitzer1970}.
The latter paper also introduced zero-range processes
and long-range TASEP.
The long-range TASEP
(which we call \emph{PushTASEP} following more recent works)
is the focus of the present paper.

Since early works, TASEP and PushTASEP 
were often studied in parallel.
Once a result (such as description of hydrodynamics and local equilibria
\cite{Liggett1985}, limiting density \cite{Rost1981}, 
or asymptotic fluctuations \cite{johansson2000shape}) for TASEP is established, 
it can often be generalized to PushTASEP using similar tools. 
See \cite{derrida1991dynamics} 
for hydrodynamics and related results for the PushTASEP
(viewed as a special case of the Toom’s model),
and, e.g.,
\cite{diekerWarren2008determinantal}, 
\cite{BorFerr08push}
for fluctuation results.
Borodin and Ferrari
\cite{BorFerr2008DF}
introduced a two-dimensional 
stochastic
particle system
whose 
two different one-dimensional (marginally Markovian)
projections are TASEP and PushTASEP.
This coupling 
works best for special examples of initial data, most notably, for 
step initial configurations.
It is worth pointing out that most known asymptotic fluctuation
results for TASEP, PushTASEP, and related systems (in the Kardar-Parisi-Zhang universality
class, cf. \cite{CorwinKPZ}, \cite{QuastelSpohnKPZ2015})
require
\emph{integrability}, that is, the presence of some
exact formulas in the pre-limit system. 

Running either TASEP or PushTASEP in inhomogeneous space
(such that the particles' jump rates depend on their locations)
is a natural generalization. Hydrodynamic approach 
works well in macroscopically inhomogeneous systems, and
allows to write down PDEs for limiting densities
\cite{Landim1996hydrodynamics}, 
\cite{seppalainen1999existence},
\cite{rolla2008last},
\cite{Seppalainen_Discont_TASEP_2010},
\cite{calder2015directed}. 
This leads to law of large numbers type results for 
the height function (in particular, of the inhomogeneous TASEP).
However, when the disorder is microscopic (such as just one slow bond),
this affects the local equilibria, and makes the analysis of both
limit shape and asymptotic fluctuations of TASEP
much harder
\cite{Basuetal2014_slowbond},
\cite{basu2017invariant}.
Overall, putting TASEP on inhomogeneous space 
breaks its integrability.

On the other hand, considering particle-dependent inhomogeneity
(when the jump rate depends on the particle's number, but not its location)
in TASEP preserves its integrability, and allows to extract the corresponding fluctuation
results, cf. \cite{BaikBBPTASEP},
\cite{BorodinEtAl2009TwoSpeed},
\cite{Duits2011GFF}.

\medskip

The main goal of this paper is to show that, in contrast with TASEP,
the PushTASEP in inhomogeneous space 
started from the step initial configuration
retains the integrability
for arbitrary inhomogeneities. 
Namely, we obtain a matching of the PushTASEP to a certain Schur process,
which follows by taking a \emph{third} marginally Markovian projection of the
two-dimensional dynamics of Borodin--Ferrari
\cite{BorFerr2008DF}, which was not observed previously.
This coupling is also present in the 
Robinson-Schensted-Knuth insertion --- a mechanism originally employed to 
obtain TASEP fluctuations in \cite{johansson2000shape}.
The coupling of inhomogeneous PushTASEP to Schur processes, together with their determinantal structure
\cite{okounkov2001infinite}, \cite{okounkov2003correlation}
leads to exact formulas for the PushTASEP.
We illustrate the integrability 
by obtaining limit shape and fluctuation results for PushTASEP with arbitrary
macroscopic inhomogeneity.

\begin{remark}
	Based 
	on the tools employed in the present work, 
	one can even say that
	our results could have been observed already in the mid-2000s.
	However, it is the much more recent development of stochastic vertex models,
	especially their couplings in \cite{BorodinBufetovWheeler2016},
	\cite{BufetovMatveev2017},
	\cite{BufetovPetrovYB2017} to Hall-Littlewood processes, that prompted the present work
	(as a $t=0$ degeneration of the Hall-Littlewood situation).
	Asymptotic behavior of the Hall-Littlewood deformation of the PushTASEP
	(in a homogeneous case) was studied in \cite{Ghosal2017KPZ}.
\end{remark}

Other examples of 
integrable stochastic particle systems in one-dimensional inhomogeneous
space have been recently studied in 
\cite{BorodinPetrov2016Exp},
\cite{SaenzKnizelPetrov2018}.
These systems may be viewed as analogues of 
$q$-TASEP or TASEP, respectively in continuous space. 
(The $q$-TASEP is a certain integrable $q$-deformation of TASEP
\cite{BorodinCorwin2011Macdonald}.) In those inhomogeneous systems, a certain 
choice of inhomogeneity leads to interesting phase transitions
corresponding to formation of ``traffic jams'', when the density goes to infinity.
In PushTASEP the density is bounded by one, and so we do not expect this type of
phase transitions to appear.
A two-dimensional stochastic particle system 
in inhomogeneous space 
unifying both the inhomogeneous PushTASEP considered in the present paper,
and a TASEP-like process similar to the one in
\cite{SaenzKnizelPetrov2018},
is studied in 
\cite{theodoros2019_determ} (the latter 
was completed simultaneously with the present paper and independently of it).

\medskip

In the rest of the introduction we give the main definitions and formulate the results.

\subsection{PushTASEP in inhomogeneous space}
\label{sub:PushTASEP_intro}

Fix a positive \emph{speed function} $\xi_\bullet=\{\xi_x\}_{x\in \mathbb{Z}_{\ge1}}$, uniformly
bounded away from zero and infinity. 
By definition, the PushTASEP is a continuous time Markov
process on particle configurations 
\begin{equation}\label{eq:PushTASEP_configurations}
	\mathsf{x}_1(t)<\mathsf{x}_2(t)<\mathsf{x}_3(t)<\ldots
\end{equation}
on $\mathbb{Z}_{\ge1}$ (at most one particle per site is allowed).
We consider only the \emph{step initial configuration}
$\mathsf{x}_i(0)=i$ for all $i\ge1$,
so at all times the particle configuration has a leftmost particle. 

The system evolves as follows. At each
site $x\in \mathbb{Z}_{\ge1}$ there is an independent exponential clock with
rate $\xi_x$ (i.e., the mean waiting time till the clock rings is
$1/\xi_x$). When the clock at site $x\in\mathbb{Z}_{\ge1}$ rings and there is
no particle at $x$, nothing happens. Otherwise, let some particle
$\mathsf{x}_i$ be at $y$. When the clock rings, $\mathsf{x}_i$ jumps to the
right by one. If the destination $x+1$ is occupied by $\mathsf{x}_{i+1}$, then
$\mathsf{x}_{i+1}$ is pushed to the right by one, which may trigger subsequent
instantaneous pushes. That is, if there is a packed cluster of particles to
the right of $\mathsf{x}_i$, i.e.,
$\mathsf{x}_{i+m}-m=\ldots=\mathsf{x}_{i+1}-1=\mathsf{x}_i$ and
$\mathsf{x}_{i+m+1}-1>\mathsf{x}_{i+m}$ for some
$m\in \left\{
	1,2,\ldots
\right\}
\cup
\left\{
	+\infty
\right\}$,
then each of the
particles $\mathsf{x}_{i+\ell}$, $\ell=1,\ldots, m$, is instantaneously pushed
to the right by one. The case $m=+\infty$ corresponds to pushing 
the whole right-infinite densely packed cluster of particles to the right by one. Clearly, 
the evolution preserves the order \eqref{eq:PushTASEP_configurations} of
particles. See \Cref{fig:inhom_Push} for an illustration.

\begin{figure}[htpb]
	\centering
	\includegraphics[width=.7\textwidth]{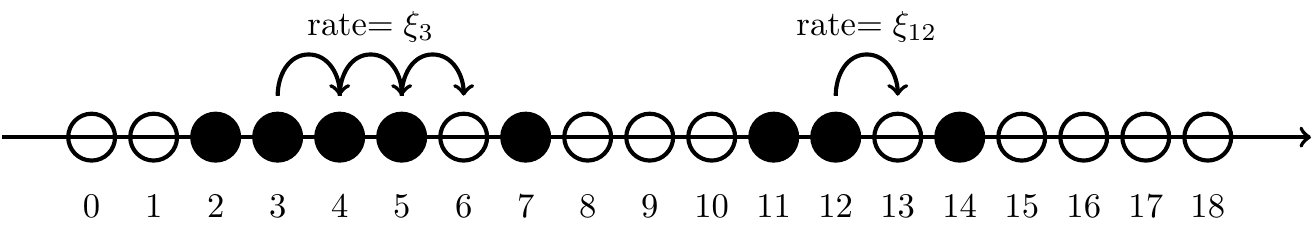}
	\caption{%
		Two possible transitions 
		in the inhomogeneous PushTASEP. 
		The first one corresponds to activating the particle at $3$
		which happens at rate $\xi_3$. 
		This particle then pushes two other particles (located at $4$ and $5$) 
		to the right by one.
		The second transition corresponds to activating the particle at $12$
		at rate $\xi_{12}$.%
	}
	\label{fig:inhom_Push}
\end{figure}

Thus described Markov process on particle configurations in
$\mathbb{Z}_{\ge1}$ is well-defined. Indeed, consider its restriction to
$\left\{ 1,\ldots, N \right\}\subset \mathbb{Z}_{\ge1}$ for any $N$. This is a
continuous time Markov process on a finite space in which at most one
exponential clock can ring at a given time moment. For different $N$, such
restrictions are compatible, and thus the process on configurations in
$\mathbb{Z}_{\ge1}$ exists by the Kolmogorov extension theorem.
Note however that the number of jumps in the process on 
$\mathbb{Z}_{\ge1}$ during each initial time interval
$[0,\varepsilon)$, $\varepsilon>0$, 
is infinite.

\begin{remark}
	The PushTASEP on the whole space $\mathbb{Z}_{\ge1}$
	might be alternatively described as follows.
	When a particle jumps, it goes to the closest empty site that is to
	its right. If there are no empty sites to the right, the particle disappears.
\end{remark}

\subsection{Determinantal structure}

The \emph{height function} of the PushTASEP is
defined as
\begin{equation}
	\label{eq:height_function_def}
	h(t,N):=\#\{\textnormal{particles
		in $\left\{ 1,\ldots,N\right\}$ at time $t$}\},
	\qquad N\in \mathbb{Z}_{\ge1}, \quad t\in \mathbb{R}_{\ge0}.
\end{equation}
The step initial condition corresponds to $h(0,N)=N$ for all $N\ge0$.

\begin{definition}
	\label{def:down_right_path}
	A \emph{down-right path} 
	in the $(t,N)$ plane is a collection
	$\mathfrak{p}=\{(t_i,N_i)\}_{i=1}^{r}$, where $r\ge1$, 
	\begin{equation}
		\label{eq:space-like-path-def}
		N_1\ge N_2\ge \ldots \ge N_r\ge0,\qquad 0\le t_1\le t_2\le
		\ldots \le t_r,
	\end{equation}
	and the points $(t_i,N_i)$ are pairwise distinct.
\end{definition}

\begin{remark}
	Down-right paths are also called \emph{space-like} (as opposed to
	\emph{time-like}, when both $t_i$ and $N_i$ increase). These names come from a
	growth model reformulation, cf. \cite{derrida1991dynamics}, \cite{Ferrari_Airy_Survey}.
	\label{rmk:space-like}
\end{remark}

Define a kernel depending on the 
speed function $\xi_{\bullet}$ by
\begin{equation}
	\label{eq:K_intro}
	K(t,N,x;t',N',x'):=
	\mathbf{1}_{t=t'}\mathbf{1}_{N=N'}\mathbf{1}_{x=x'}
	-
	\frac{1}{(2\pi \mathbf{i})^2}
	\oint\oint \frac{dz\,dw}{z-w}\frac{w^{x'+N'}}{z^{x+N+1}}
	\,e^{t z-t'w}
	\frac{\prod_{a=1}^{N}\bigl(z-\xi_a\bigr)}
	{\prod_{b=1}^{N'}\bigl(w-\xi_b\bigr)}.
\end{equation}
The integration contours are positively oriented simple closed curves around $0$, the
$w$ contour additionally encircles all $\{\xi_x\}_{x\in\mathbb{Z}_{\ge1}}$, and
the contours satisfy $|z|>|w|$ for $t\le t'$ and $|z|<|w|$ for $t>t'$.
(Throughout the text $\mathbf{1}_{A}$ stands for the
indicator of $A$, which is $1$ if condition $A$ is true, 
and is $0$ otherwise. By $\mathbf{1}$ without subscripts we will also
mean the identity operator.)

Fix a down-right path $\mathfrak{p}=\{(t_i,N_i)\}_{i=1}^{r}$,
and define the space
\begin{equation*}
	\mathcal{X}:=
	\mathcal{X}_1\sqcup \ldots \sqcup\mathcal{X}_r,
	\qquad \mathcal{X}_i =\mathbb{Z}.
\end{equation*}
For $y\in \mathcal{X}_i$ set $t(y)=t_i$, $N(y)=N_i$.

\begin{definition}\label{def:Lp_point_process}
	We define a determinantal random point process\footnote{For general
	definitions and properties of determinantal processes see, e.g.,
	\cite{Soshnikov2000}, \cite{peres2006determinantal}, or \cite{Borodin2009}.} 
	$\mathfrak{L}_{\mathfrak{p}}$
	on $\mathcal{X}$
	with the correlation kernel expressed through $K$ of \eqref{eq:K_intro}.
	Namely, for any $m\ge1$ and any pairwise distinct
	$y^{1},\ldots y^{m}\in \mathcal{X}$, let the corresponding correlation
	function of $\mathfrak{L}_{\mathfrak{p}}$ be given by 
	\begin{equation}\label{eq:K_correlation_functions_on_copies_of_Z}
		\mathbb{P}
		\left( \textnormal{$\mathfrak{L}_{\mathfrak{p}}$ contains all of $y^1,\ldots,y^m $} \right)
		=
		\mathop{\mathrm{det}}_{i,j=1}^{m}
		\Bigl[ 
			K\bigl(t(y^i),N(y^i),y^i; t(y^j),N(y^j),y^j\bigr)
		\Bigr].
	\end{equation}
	The process $\mathfrak{L}_{\mathfrak{p}}$ exists
	because it corresponds to 
	column lengths in a certain specific
	Schur process, see \Cref{sec:detSchurStructure}
	for details. The Schur process interpretation
	also implies that
	on each $\mathcal{X}_i=\mathbb{Z}$
	the random point configuration
	$\mathfrak{L}_{\mathfrak{p}}$
	almost surely has a
	leftmost point.
	Denote it by $\widehat{\ell}(t_i,N_i)$.
\end{definition}
The joint distribution of the leftmost points
$\{\widehat{\ell}(t_i,N_i)\}$
is identified with the
inhomogeneous PushTASEP. The following theorem 
is the main structural result of the present paper.
\begin{theorem}
	\label{thm:push_TASEP_multipoint_correlations_intro}
	Fix an arbitrary down-right path $\mathfrak{p}=\{(t_i,N_i)\}_{i=1}^{r}$.
	The joint distribution of the 
	PushTASEP height function along this down-right path 
	is related to the determinantal process
	$\mathfrak{L}_{\mathfrak{p}}$ defined above as
	\begin{equation*}
		\bigl\{
			h(t_i,N_i)
		\bigr\}_{i=1}^{r}
		\stackrel{d}{=}
		\bigl\{
			N_i+\widehat{\ell}(t_i,N_i)
		\bigr\}_{i=1}^{r},
	\end{equation*}
	where 
	``\,$\stackrel{d}{=}$''
	means equality in distribution.
\end{theorem}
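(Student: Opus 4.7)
The plan is to realize the PushTASEP along the down-right path $\mathfrak{p}$ as a marginal of a Schur process and then invoke the Okounkov--Reshetikhin formula to obtain the kernel $K$ of \eqref{eq:K_intro}. To start, I would construct a continuous-time Markov dynamics on triangular arrays of interlacing partitions --- equivalently, a continuous-time column-insertion RSK on a word in the alphabet $\mathbb{Z}_{\ge1}$ whose letter $x$ is inserted at rate $\xi_x$. Following the Borodin--Ferrari construction \cite{BorFerr2008DF}, this two-dimensional dynamics will admit several marginally Markov projections.

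The next step is to identify three such marginals. First, at each fixed $(t,N)$, the shape $\lambda^{(t,N)}$ should have the law of the Schur measure with specialization $(\xi_1,\ldots,\xi_N)$ in the first family of variables and a Plancherel specialization of parameter $t$ in the second, and moreover the joint law of the collection $\{\lambda^{(t_i,N_i)}\}$ along a down-right path should form a Schur process in the sense of \cite{okounkov2003correlation}. Second, the third marginally Markov projection, which is the one not previously recorded, will send $\lambda^{(t,N)}$ to the particle configuration of the inhomogeneous PushTASEP via $h(t,N) = \lambda^{(t,N)}_N$. This is the step that exploits having the inhomogeneity parameters $\xi_x$ attached to positions rather than to particles or to Gelfand--Tsetlin rows, and it is the main thing to verify at the level of local update rules.

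From here the theorem follows fairly quickly. Under the standard embedding $\lambda \mapsto \{\lambda_i - i\}_{i=1}^{N} \subset \mathbb{Z}$ of partitions of length at most $N$, the leftmost point is $\lambda_N - N$; combined with the identification $h(t,N) = \lambda^{(t,N)}_N$ from the third projection, this gives $\widehat{\ell}(t,N) = h(t,N) - N$ pointwise along $\mathfrak{p}$. Joint equality in distribution then follows from the Schur-process structure together with the Okounkov--Reshetikhin determinantal formula, after a residue computation rewriting the resulting kernel into the double-contour integral form of \eqref{eq:K_intro}; the two cases $|z|>|w|$ and $|z|<|w|$ come out naturally from the space-like ordering of $(t_i,N_i)$ versus $(t_j,N_j)$ on $\mathfrak{p}$.

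The delicate step is exhibiting this third marginally Markov projection with rates attached to positions, as opposed to the classical particle- or row-dependent projections of \cite{BorFerr2008DF}. One has to write down a local update rule on the interlacing array so that (i) the row-$N$ marginal is the inhomogeneous PushTASEP with site rates $\xi_x$, and (ii) the distribution of $\lambda^{(t,N)}$ remains a Schur measure for all $(t,N)$. Once this matching is in hand, the remaining steps reduce to by-now-standard manipulations with Schur processes.
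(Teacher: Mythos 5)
Your high-level plan (Borodin--Ferrari two-dimensional dynamics $\to$ Schur-process marginal along down-right paths $\to$ Okounkov--Reshetikhin determinantal formula) matches the paper's route, but there is a genuine error in the key identification, and as a consequence a missing step.

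The formula $h(t,N)=\lambda^{(t,N)}_N$ is wrong. The paper's ``PushTASEP coupling property'' is $h(t,N)\stackrel{d}{=}N-\lambda_1'^{(t,N)}$, where $\lambda_1'^{(t,N)}$ is the length of the \emph{first column} of $\lambda^{(t,N)}$, equivalently the number of zero entries in row $N$ of the interlacing array. The quantity $\lambda^{(t,N)}_N$ (the bottom row entry) is a different marginal: the paper's Remark 2.5 notes that $\{\lambda^{(t,N)}_N-N\}_{N\ge1}$ recovers TASEP with particle-dependent speeds, not the position-inhomogeneous PushTASEP. These two quantities are never equal: $\lambda_N=0$ exactly when $\lambda_1'<N$, so one of them vanishes precisely when the other is positive. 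Also note that $h(0,N)=N$ for the step initial condition, whereas $\lambda^{(0,N)}_N=0$, so your identification already fails at $t=0$.

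This matters because the rest of the argument hinges on it. Since the height function sees the first column, one must pass from the row embedding $\{\lambda_j-j\}_{j\ge1}$ (whose kernel is the Okounkov--Reshetikhin kernel $K_{\mathsf{F}}$ and which is densely packed at $-\infty$, with \emph{no} leftmost point) to the complementary column embedding $\{-\lambda_i'+i-1\}_{i\ge1}$, which is densely packed at $+\infty$ and whose leftmost point is $-\lambda_1'$. This complementation is precisely what replaces $K_{\mathsf{F}}$ by $K=\mathbf{1}-K_{\mathsf{F}}$ and produces the leading indicator $\mathbf{1}_{t=t'}\mathbf{1}_{N=N'}\mathbf{1}_{x=x'}$ in \eqref{eq:K_intro}; it is an inclusion--exclusion step (Kerov's observation) and not the ``residue computation'' you invoke. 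Your truncated row embedding $\{\lambda_i-i\}_{i=1}^{N}$ does have a leftmost point $\lambda_N-N$, but that finite configuration is not the one whose correlations are computed by the Okounkov--Reshetikhin formula, so it neither gives you $K$ nor the PushTASEP height function.
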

\begin{corollary}
	\label{cor:pushTASEP_Fredholm_single_point_formula_intro}
	For any $t\ge0$, $N\ge1$, and $y\ge0$ we have
	\begin{equation}
	\begin{split}
		\mathbb{P}\bigl( h(t,N)>y \bigr)
		&=
		\det
		\bigl(
			\mathbf{1}-K(t,N,\cdot;t,N,\cdot)
		\bigr)
		_{ \left\{ \ldots,y-N-2,y-N-1,y-N \right\}}
		\\&=
		1+\sum_{n=1}^{\infty}\frac{(-1)^{n}}{n!}
		\sum_{x_1=-\infty}^{y-N} \ldots \sum_{x_n=-\infty}^{y-N} 
		\;
		\mathop{\mathrm{det}}_{i,j=1}^{n}
		\left[
			K(t,N,x_i;t,N,x_j) 
		\right],
	\end{split}
	\label{eq:Fredhom_theorem_intro}
	\end{equation}
	where the second equality is the series expansion of 
	the Fredholm determinant given in the first equality. (See \Cref{appsub:Fredholm_dets}
	below for more details on Fredholm determinants.) 
	Similar Fredholm determinantal formulas are available for 
	joint distributions of the PushTASEP height function
	along down-right paths.
\end{corollary}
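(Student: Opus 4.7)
The plan is to deduce this corollary directly from Theorem~\ref{thm:push_TASEP_multipoint_correlations_intro} together with the standard gap-probability formula for determinantal point processes; no new input about PushTASEP dynamics is required.

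First, I would specialize the theorem to the trivial down-right path $\mathfrak{p}=\{(t,N)\}$ consisting of a single point, so that $\mathcal{X}=\mathcal{X}_1=\mathbb{Z}$. The theorem then yields $h(t,N)\stackrel{d}{=}N+\widehat{\ell}(t,N)$, where $\widehat{\ell}(t,N)$ is the leftmost point of the determinantal process $\mathfrak{L}_{\mathfrak{p}}$ on $\mathbb{Z}$ with correlation kernel $K(t,N,\cdot\,;t,N,\cdot)$. Translating the event in question,
\begin{equation*}
    \mathbb{P}\bigl(h(t,N)>y\bigr)
    =\mathbb{P}\bigl(\widehat{\ell}(t,N)>y-N\bigr)
    =\mathbb{P}\bigl(\mathfrak{L}_{\mathfrak{p}}\cap\{\ldots,y-N-1,y-N\}=\varnothing\bigr).
\end{equation*}
Thus the problem reduces to computing a gap probability for $\mathfrak{L}_{\mathfrak{p}}$ on the half-line $A:=\{\ldots,y-N-1,y-N\}\subset\mathbb{Z}$.

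Next, I would invoke the standard identity for determinantal processes on a discrete space: if a point process has correlation kernel $K$ and $A$ is a subset on which the operator $K|_A$ is trace-class, then the probability of seeing no points in $A$ equals the Fredholm determinant $\det(\mathbf{1}-K)_A$, with the series expansion
\begin{equation*}
    \det(\mathbf{1}-K)_A=1+\sum_{n=1}^{\infty}\frac{(-1)^{n}}{n!}\sum_{x_1,\ldots,x_n\in A}\mathop{\mathrm{det}}_{i,j=1}^{n}\bigl[K(x_i,x_j)\bigr]
\end{equation*}
obtained by inclusion-exclusion from the correlation functions in \eqref{eq:K_correlation_functions_on_copies_of_Z}. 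Applying this with $A=\{\ldots,y-N-1,y-N\}$ and the kernel $K(t,N,\cdot\,;t,N,\cdot)$ gives both equalities of \eqref{eq:Fredhom_theorem_intro}.

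The only technical point to justify is convergence of the Fredholm expansion, i.e., that $K|_A$ defines a trace-class operator on $\ell^2(A)$; for this one appeals to the Schur process origin of $\mathfrak{L}_{\mathfrak{p}}$ promised in Definition~\ref{def:Lp_point_process} (and elaborated in the referenced \Cref{sec:detSchurStructure} and \Cref{appsub:Fredholm_dets}), which guarantees the requisite decay of $K(t,N,x;t,N,x')$ as $x,x'\to-\infty$ so that the sums in \eqref{eq:Fredhom_theorem_intro} converge absolutely. The extension to multipoint (down-right path) distributions is identical: one applies the theorem along $\mathfrak{p}$, restricts each $\mathcal{X}_i$ to the corresponding half-line $(-\infty,y_i-N_i]$, and uses the same gap-probability identity for the determinantal process on $\mathcal{X}$. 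I expect the only mildly delicate step to be this trace-class / absolute convergence verification; the combinatorial translation itself is immediate from the theorem.
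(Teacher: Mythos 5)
Your proposal is correct and follows essentially the same route as the paper: both specialize to a single-point down-right path, translate $\{h(t,N)>y\}$ into the event that $\mathfrak{L}_{\mathfrak{p}}$ has no points in $\{\ldots,y-N-1,y-N\}$, and apply the standard gap-probability Fredholm determinant identity for determinantal processes. The only cosmetic difference is that the paper phrases the event via the first column length $\lambda_1'$ of the Schur process rather than the leftmost point $\widehat{\ell}$, and its Remark~\ref{rmk:Fredholms_finite} observes that the sums actually truncate (not merely converge) because $K$ vanishes far to the left, which makes your trace-class concern even milder than you anticipated.
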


\Cref{thm:push_TASEP_multipoint_correlations_intro}
is a restatement of a known result on how Schur processes appear in stochastic
interacting particle systems in $(2+1)$ dimensions. Via
Robinson-Schensted-Knuth (RSK) correspondences, such connections can be
traced to \cite{Vershik1986}, and they were heavily 
utilized in probabilistic context
starting from \cite{baik1999distribution}, \cite{johansson2000shape},
\cite{PhahoferSpohn2002}. 
Markov dynamics on particle configurations 
coming from RSK
correspondences were studied in \cite{Baryshnikov_GUE2001},
\cite{OConnell2003}, \cite{OConnell2003Trans}. 
Another type of Markov dynamics
whose fixed-time distributions are given by Schur processes was
introduced in \cite{BorFerr2008DF}, and it, too, can be utilized to obtain
\Cref{thm:push_TASEP_multipoint_correlations_intro}. 
A self-contained exposition of the proof
of this theorem following the latter approach is presented in
\Cref{sec:detSchurStructure}.

\begin{remark}[Connection to vertex models]
	Yet another alternative way of getting 
	\Cref{thm:push_TASEP_multipoint_correlations_intro}
	is to view the PushTASEP 
	as a degeneration of the stochastic six vertex
	model \cite{GwaSpohn1992}, \cite{BCG6V}.
	The latter was recently connected to 
	Hall-Littlewood processes
	\cite{borodin2016stochastic_MM}, 
	\cite{BorodinBufetovWheeler2016},
	\cite{BufetovMatveev2017}.
	Setting the Hall-Littlewood parameter 
	$t$ to zero leads to a distributional mapping between 
	our inhomogeneous PushTASEP and Schur processes.
\end{remark}

\subsection{Hydrodynamics}
\label{sub:hydrodynamics}

Let the space and time in the PushTASEP, as well as the speed function
scale as follows:
\begin{equation}
	\label{eq:space-time-speed-scaling}
	t=\tau L , 
	\qquad 
	N=\lfloor \eta L \rfloor ,
	\qquad 
	\xi_x= \boldsymbol\xi(x/L), \quad x\in \mathbb{Z}_{\ge1},
\end{equation}
where $L$ is the large parameter going to infinity, and
$\boldsymbol\xi(\cdot)$ is a fixed positive
\emph{limiting speed function}
bounded away from zero and infinity.
Under \eqref{eq:space-time-speed-scaling}, one expects that the height function 
$h(t,N)$ admits a limit shape (i.e., law of large numbers type) behavior
of the form
\begin{equation}
	\label{eq:expected_limit_shape_type_behavior}
	\frac{h(\tau L,\lfloor \eta L \rfloor )}{L}\to \mathfrak{h}(\tau,\eta),
	\quad 
	\text{in probability as $L\to+\infty$}.
\end{equation}
Let us first write down a
partial differential equation for the limiting 
density 
\begin{equation*}
	\uprho(\tau,\eta):=\frac{\partial}{\partial\eta}\mathfrak{h}(\tau,\eta)
\end{equation*}
using hydrodynamic arguments as in
\cite{Andjel1984}, \cite{Rezakhanlou1991hydrodynamics}, 
\cite{Landim1996hydrodynamics},
\cite{Seppalainen_Discont_TASEP_2010}.
We do not rigorously justify this equation, 
but rather check that the density coming from 
fluctuation analysis satisfies the hydrodynamic equation
(this check is performed in \Cref{sec:app_check_hydrodynamics}).

Because of our scaling \eqref{eq:space-time-speed-scaling},
locally around every scaled location $\lfloor \eta L \rfloor$
the behavior of the PushTASEP (when we zoom at the lattice level)
is homogeneous
with constant speed $\xi=\boldsymbol\xi(\eta)$. 
Thus, locally around $\lfloor \eta L \rfloor$
the PushTASEP configuration
should have\footnote{We do not rigorously justify this claim here.} 
a particle distribution on $\mathbb{Z}$
which is invariant under shifts of $\mathbb{Z}$
and is stationary under the speed $\xi$ homogeneous PushTASEP dynamics
on the whole line.

A classification of translation invariant stationary distributions 
for the homogeneous PushTASEP on $\mathbb{Z}$
is available \cite{guiol1997resultat},
\cite{andjel2005long}.
Namely, ergodic (=~extreme)
such measures are precisely the 
Bernoulli product measures.
For the Bernoulli product measure of density $\rho\in[0,1]$, the 
flux (=~current) of particles in the PushTASEP (i.e., the 
expected number of particles crossing a given bond
in unit time interval) is readily seen to be 
$j(\rho)=\dfrac{\xi\rho}{1-\rho}$.
Therefore, the partial differential equation 
for the limiting density should have the form:
\begin{equation}
	\label{eq:hydrodynamic_PDE}
	\frac{\partial}{\partial\tau}\uprho(\tau,\eta)
	+
	\frac{\partial}{\partial\eta}
	\left( 
		\frac{\boldsymbol\xi(\eta)\uprho(\tau,\eta)}
		{1- \uprho(\tau,\eta)} 
	\right)=0,
	\qquad 
	\uprho(0,\eta)=\mathbf{1}_{\eta\ge0}.
\end{equation}
The singularity at $\tau=0$ coming from the initial data corresponds to the
fact that the PushTASEP makes an infinite number of jumps during every time
interval $[0,t]$, $t>0$. That is, from $t=0$ to $t=\tau L$ (for every $\tau>0$
in the regime $L\to+\infty$), the density of the particles drops below $1$
everywhere.

\begin{remark}
	\label{rmk:homogeneous_case_formulas}
	One sees from, e.g., \cite[Claim 3.1 and Proposition 3.2]{BorFerr2008DF}
	that 
	in the homogeneous case $\boldsymbol\xi(\eta)\equiv 1$, 
	a solution to \eqref{eq:hydrodynamic_PDE}
	has the form
	\begin{equation}
		\label{eq:homogeneous_limit_shape_solution}
		\uprho(\tau,\eta)=
		\begin{cases}
			1-\sqrt{{\tau}/{\eta}},&\eta\ge\tau;\\
			0,&0\le\eta<\tau.
		\end{cases}
	\end{equation}
	The condition $\eta\ge\tau$ for nonzero density 
	comes from the behavior of the leftmost particle 
	in the PushTASEP which performs a simple random walk.
	Integrating this density in $\eta$ gives the limiting height function
	$\mathfrak{h}(\tau,\eta)=\left( \sqrt\eta-\sqrt\tau \right)^2$, $\eta\ge\tau$.
\end{remark}
Next, we present a solution to \eqref{eq:hydrodynamic_PDE} for general
$\boldsymbol\xi(\cdot)$.

\subsection{Limit shape}
\label{sub:limit_shape}

For any $\eta>0$, set 
\begin{equation}
	\label{eq:tau_e_of_eta_defn}
	\tau_e=\tau_e(\eta):=
	\int_0^\eta \frac{dy}{\boldsymbol\xi(y)},
\end{equation}
this is the rescaled time when the leftmost particle in the 
PushTASEP reaches $\lfloor \eta L \rfloor $.
Consider the following equation in $z$:
\begin{equation}
	\label{eq:critical_solution_t_equation}
	\tau=\int_0^\eta \frac{\boldsymbol\xi(y)}{\left( {z}- \boldsymbol\xi(y) \right)^2}\,dy.
\end{equation}

\begin{lemma}
	\label{lemma:z_critical_solution_using_t_equation}
	For any $\eta>0$ and $\tau\in(0,\tau_e(\eta))$ equation \eqref{eq:critical_solution_t_equation}
	has a unique root $z$
	on the negative real line.
\end{lemma}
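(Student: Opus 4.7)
The plan is to interpret the right-hand side of \eqref{eq:critical_solution_t_equation} as a function
\[
F(z) := \int_0^\eta \frac{\boldsymbol\xi(y)}{(z-\boldsymbol\xi(y))^2}\,dy
\]
of the real variable $z$ restricted to the negative axis $(-\infty, 0)$, and to show that $F$ is a strictly increasing continuous bijection from $(-\infty, 0)$ onto $(0, \tau_e(\eta))$. Once this is done, uniqueness and existence of a root corresponding to the given $\tau \in (0, \tau_e(\eta))$ follow at once from the intermediate value theorem.

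First I would check that $F$ is well-defined and continuous on $(-\infty, 0)$. Since $\boldsymbol\xi$ is bounded above by some $M < \infty$ and below by some $m > 0$, for every $z < 0$ we have $(z - \boldsymbol\xi(y))^2 \ge \boldsymbol\xi(y)^2 \ge m^2$, so the integrand is dominated by $M/m^2$ uniformly in $y \in [0,\eta]$; dominated convergence then yields continuity of $F$. The same bound justifies differentiation under the integral, giving
\[
F'(z) = -2\int_0^\eta \frac{\boldsymbol\xi(y)}{(z-\boldsymbol\xi(y))^3}\,dy.
\]
For $z < 0$ we have $z - \boldsymbol\xi(y) < 0$, so the cube is negative and the integrand is strictly positive; hence $F'(z) > 0$ and $F$ is strictly increasing.

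It remains to identify the two boundary limits. As $z \to -\infty$ the integrand tends to zero pointwise and is still dominated by $M/m^2$, so $F(z) \to 0$. As $z \to 0^-$, pointwise convergence gives $\boldsymbol\xi(y)/(z-\boldsymbol\xi(y))^2 \to 1/\boldsymbol\xi(y)$, again dominated by $M/m^2$, so $F(z) \to \int_0^\eta dy/\boldsymbol\xi(y) = \tau_e(\eta)$ by the definition \eqref{eq:tau_e_of_eta_defn}. Combining continuity, strict monotonicity, and these two limits, $F$ is a homeomorphism from $(-\infty, 0)$ onto $(0, \tau_e(\eta))$, which is exactly the statement of the lemma.

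The argument is essentially elementary real analysis, and I do not foresee any genuine obstacle. The only mildly delicate step is the limit as $z \to 0^-$: the integrand does \emph{not} blow up precisely because $\boldsymbol\xi$ is assumed to be bounded away from zero, and it is exactly this hypothesis that makes the upper endpoint of the range of $F$ equal the finite value $\tau_e(\eta)$, thereby placing the natural ceiling $\tau < \tau_e(\eta)$ on the admissible range of $\tau$ in the lemma.
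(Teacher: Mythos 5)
Your proof is correct and follows essentially the same route as the paper's: both rely on the strict monotonicity of $F(z)=\int_0^\eta \boldsymbol\xi(y)(z-\boldsymbol\xi(y))^{-2}\,dy$ on $(-\infty,0)$ together with the boundary values $F(-\infty)=0$ and $F(0^-)=\tau_e(\eta)$. You simply spell out the dominated-convergence justifications that the paper leaves as ``evident.''
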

We denote this solution by
$\mathfrak{z}=\mathfrak{z}(\tau,\eta)$.
\begin{proof}[Proof of \Cref{lemma:z_critical_solution_using_t_equation}]
	This is evident due to the strict increasing of the right-hand side of 
	\eqref{eq:critical_solution_t_equation} in $z\in(-\infty,0)$,
	and the fact that at $z=0$ this right-hand side is equal to $\tau_e(\eta)$ given by \eqref{eq:tau_e_of_eta_defn}.
\end{proof}

\begin{definition}
	\label{def:limit_shape_h}
	Define $\mathfrak{h}=\mathfrak{h}(\tau,\eta)$ by
	\begin{equation}
		\label{eq:h_limit_shape_definition}
		\mathfrak{h}(\tau,\eta):=
		\begin{cases}
			\displaystyle\int_0^\eta\frac{\mathfrak{z}^2(\tau,\eta)}
			{\left( \mathfrak{z}(\tau,\eta)- \boldsymbol\xi(y) \right)^2}
			\,dy,& 0\le\tau\le \tau_e(\eta);\\
			0,&\tau\ge\tau_e(\eta),
		\end{cases}
	\end{equation}
	where $\mathfrak{z}(\tau,\eta)$ comes from \Cref{lemma:z_critical_solution_using_t_equation}.
	We call $\mathfrak{h}$ the \emph{limiting height function}.
\end{definition}
\begin{remark}
	\label{rmk:continuity_in_eta}
	\textbf{1.}
	Since the right-hand side of \eqref{eq:critical_solution_t_equation}
	depends on $z$ and $\eta$ in a continuous way
	when $z\le0$, the function $\eta\mapsto\mathfrak{z}(\tau,\eta)$ 
	is continuous for each fixed $\tau$.
	Thus, the height function \eqref{eq:h_limit_shape_definition}
	is also continuous in $\eta$.
	(This continuity extends to 
	the unique 
	$\eta_e$ such that $\tau_e(\eta_e)=\tau$
	because both cases in 
	\eqref{eq:h_limit_shape_definition} give zero.)

	\medskip
	\noindent
	\textbf{2.}
	Equivalently, the function $\tau\mapsto
	\mathfrak{h}(\tau,\eta)$ 
	is the Legendre dual
	of the function
	$\displaystyle
	z\mapsto F(z,\eta):=\int_0^\eta\frac{z}{\boldsymbol\xi(y)-z}\,dy$,
	where $z<0$.
	That is, $\displaystyle\mathfrak{h}(\tau,\eta)=\max_{z<0}(\tau z-F(z,\eta))$.
\end{remark}

One can check
that the limiting density 
corresponding to $\mathfrak{h}(\tau,\eta)$ 
(defined as 
$\uprho(\tau,\eta)=\frac{\partial}{\partial\eta}\mathfrak{h}(\tau,\eta)$
when this derivative exists)
is 
expressed through $\mathfrak{z}$ as
\begin{equation}
	\label{eq:rho_via_z_critical_point}
	\uprho(\tau,\eta)=\frac{\mathfrak{z}(\tau,\eta)}{\mathfrak{z}(\tau,\eta)- \boldsymbol\xi(\eta)},
	\qquad 0\le\tau\le \tau_e(\eta).
\end{equation}
One can also verify that
$\uprho(\tau,\eta)$ formally satisfies the hydrodynamic equation
\eqref{eq:hydrodynamic_PDE}.
This is done in \Cref{sec:app_check_hydrodynamics}.
See \Cref{fig:limsh,fig:limsh2} for illustrations of limit shapes of the height function
and the density.

\begin{figure}[htpb]
	\centering
	\begin{tabular}{c|c|c|c}
		&
		$\boldsymbol\xi(x)\equiv 1$, $\tau=1$
		&
		$\boldsymbol\xi(x)=\mathbf{1}_{\eta<3}+5\cdot \mathbf{1}_{\eta\ge3}$, $\tau=1$
		&
		$\boldsymbol\xi(x)=\mathbf{1}_{\eta<3}+\frac12\cdot \mathbf{1}_{\eta\ge3}$, $\tau=1$
		\\\hline
		\raisebox{40pt}{$\uprho$}&\includegraphics[width=.25\textwidth]{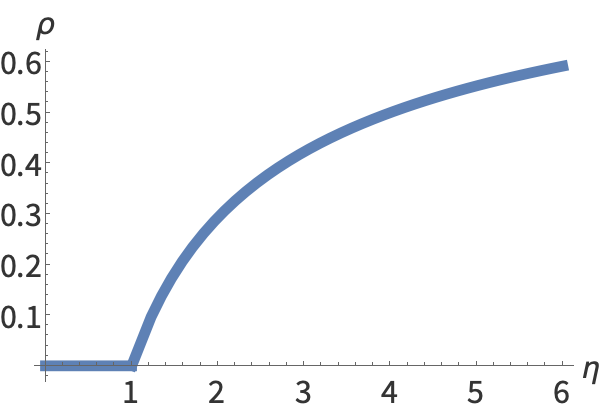}
		\rule{0pt}{83pt}
		&
		\includegraphics[width=.25\textwidth]{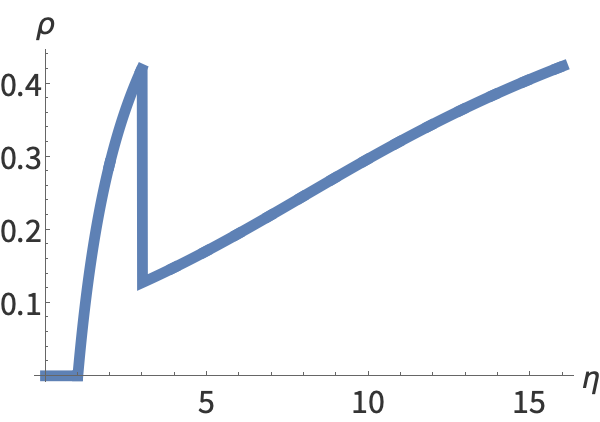}
		&
		\includegraphics[width=.25\textwidth]{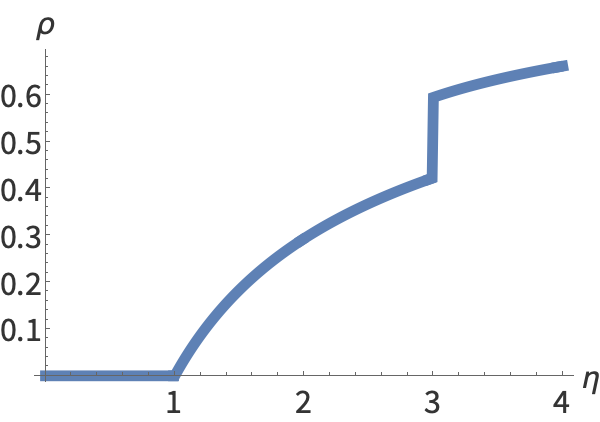}
		\\\hline
		\raisebox{40pt}{$\mathfrak{h}$}&
		\includegraphics[width=.25\textwidth]{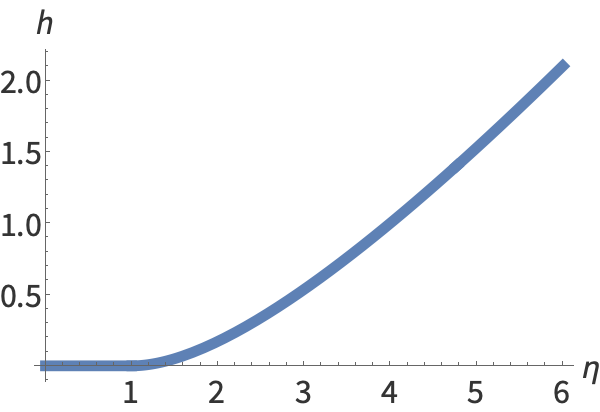}
		\rule{0pt}{83pt}
		&
		\includegraphics[width=.25\textwidth]{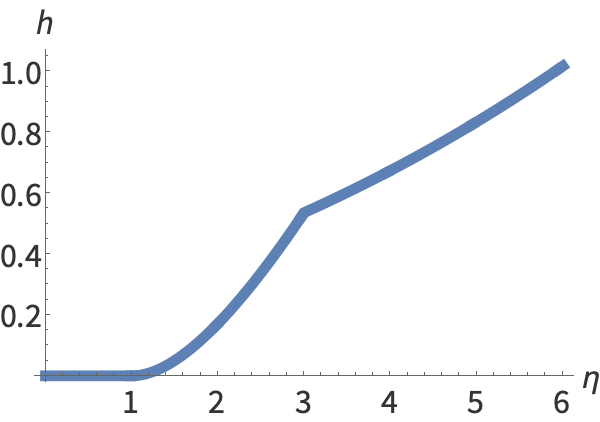}
		&
		\includegraphics[width=.25\textwidth]{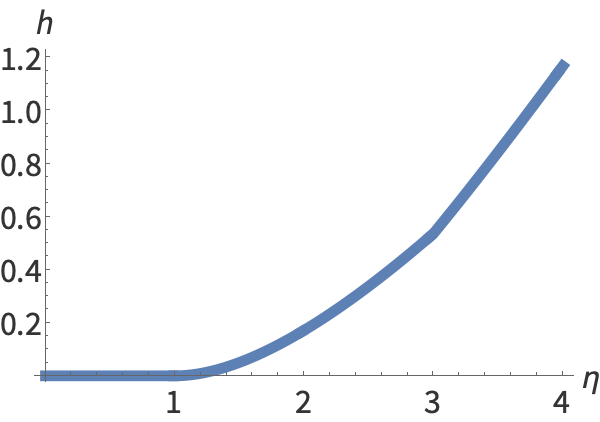}
	\end{tabular}
	\caption{Limiting density and height function for three cases of piecewise
	linear $\boldsymbol\xi(\cdot)$.}
	\label{fig:limsh}
\end{figure}

\begin{figure}[htpb]
	\centering
	\includegraphics[width=.4\textwidth]{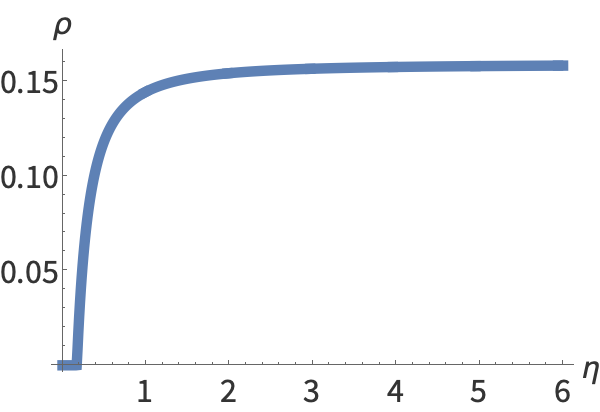}
	\qquad 
	\includegraphics[width=.4\textwidth]{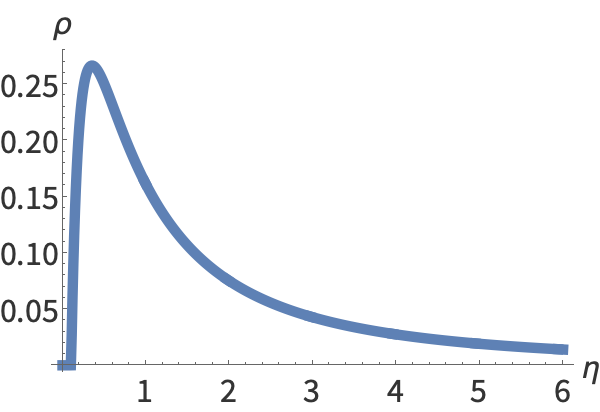}
	\caption{Two more examples of limiting density, for $\boldsymbol\xi(x)=x+\frac1{10}$ (left)
	and $\boldsymbol\xi(x)=x^2+\frac1{10}$ (right) on the interval $[0,6]$.
	These numerical computations suggest that many particles
	run off to infinity --- a positive proportion (left) or all of them (right).
	Note that in both cases the speed function $\boldsymbol\xi(\cdot)$ is unbounded.}
	\label{fig:limsh2}
\end{figure}

\begin{theorem}[Limit shape]
	\label{thm:limit_shape_result}
	Fix arbitrary $\tau,\eta>0$.
	If
	the limiting speed function $\boldsymbol\xi(\cdot)$ 
	is piecewise continuously differentiable
	on $[0,\eta]$,
	then in the regime \eqref{eq:space-time-speed-scaling}
	we have the convergence
	$L^{-1}h(\tau L,\lfloor \eta L \rfloor )\to \mathfrak{h}(\tau,\eta)$
	in probability as $L\to+\infty$. Here $h$ is the random
	height function of our PushTASEP, 
	and $\mathfrak{h}$ is defined by \eqref{eq:h_limit_shape_definition}.
\end{theorem}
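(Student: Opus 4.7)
The plan is to combine the Fredholm determinantal representation from \Cref{cor:pushTASEP_Fredholm_single_point_formula_intro} with a steepest-descent analysis of the kernel $K$. Setting $A=(-\infty,y-N]\cap\mathbb{Z}$, one has $\mathbb{P}(h(\tau L,\lfloor\eta L\rfloor)>y)=\det(\mathbf{1}-K|_A)$, so convergence in probability to $\mathfrak{h}$ reduces to showing this determinant tends to $1$ for $y=\lfloor(\mathfrak{h}-\epsilon)L\rfloor$ and to $0$ for $y=\lfloor(\mathfrak{h}+\epsilon)L\rfloor$, for every $\epsilon>0$. Under the scaling $t=\tau L$, $N=\lfloor\eta L\rfloor$, $\xi_a=\boldsymbol{\xi}(a/L)$, $(x+N)/L\to\mathcal{H}$, the integrand of \eqref{eq:K_intro} factorizes as $\exp(L S_{\mathcal H}(z)-L S_{\mathcal H}(w))$ times a polynomially bounded factor, with
\[
S_{\mathcal H}(z)=\tau z-\mathcal H\log z+\int_0^\eta\log(z-\boldsymbol\xi(y))\,dy,
\]
the Riemann-sum approximation $\sum_{a=1}^{N}\log(z-\xi_a)=L\int_0^\eta\log(z-\boldsymbol\xi(y))\,dy+O(\log L)$ being valid uniformly on compacts avoiding the $\xi$'s by piecewise $C^1$ smoothness of $\boldsymbol\xi$. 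A direct computation using \eqref{eq:critical_solution_t_equation} and \eqref{eq:h_limit_shape_definition} shows that at $\mathcal H=\mathfrak h$ the point $z=\mathfrak z(\tau,\eta)$ is a double saddle, $S'_{\mathfrak h}(\mathfrak z)=S''_{\mathfrak h}(\mathfrak z)=0$; it bifurcates into a complex conjugate pair for $\mathcal H>\mathfrak h$ (``liquid'' regime) and into two real saddles for $\mathcal H<\mathfrak h$ (``frozen'' regime).

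For the upper tail, pick $B=\mathbb Z\cap((\mathfrak h-\eta)L,(\mathfrak h-\eta+\epsilon/2)L]\subset A$. Steepest descent through the complex conjugate pair yields $K(t,N,x;t,N,x)\ge c>0$ uniformly for $x\in B$, so $\operatorname{tr}(K|_B)\ge c|B|\gtrsim L$. By monotonicity of gap probabilities, $\det(\mathbf 1-K|_A)\le\det(\mathbf 1-K|_B)$, and since the eigenvalues $\lambda_i$ of $K|_B$ lie in $[0,1]$ (by the Schur process structure), $\det(\mathbf 1-K|_B)=\prod_i(1-\lambda_i)\le e^{-\operatorname{tr}(K|_B)}\to 0$.

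For the lower tail, observe that $h(t,N)\ge 0$ deterministically forces $\widehat\ell\ge -N$ almost surely, hence $K(t,N,x;t,N,x)=0$ for $x<-N$. For $x\in\mathbb Z\cap[-N,\lfloor(\mathfrak h-\eta-\epsilon)L\rfloor]$ one is in the frozen regime: deforming the $z$- and $w$-contours to steepest-descent loops through the two real saddles of $S_{\mathcal H}$ (respecting $|z|>|w|$ and the required encirclements of $0$ and the $\xi_a$'s) produces $\operatorname{Re}(S_{\mathcal H}(z)-S_{\mathcal H}(w))\le -c(\epsilon)<0$ on the contours away from the saddles, giving $K(t,N,x;t,N,x)=O(e^{-c(\epsilon)L})$ uniformly in $x$. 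The Markov bound then yields $\mathbb P(\mathfrak L\cap A\ne\emptyset)\le\operatorname{tr}(K|_A)\le CLe^{-c(\epsilon)L}\to 0$, so $\det(\mathbf 1-K|_A)\to 1$.

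The main obstacle is executing the uniform steepest descent rigorously: exhibiting explicit $z$- and $w$-contours through the correct saddles (complex-pair in the liquid case, real-pair in the frozen case) that simultaneously satisfy $|z|>|w|$ and the pole-encirclement conditions; controlling $\operatorname{Re}(S_{\mathcal H}(z)-S_{\mathcal H}(w))$ uniformly in $x$ across compact ranges of $\alpha=x/L$, including near points of non-smoothness of $\boldsymbol\xi$; and verifying that the $O(\log L)$ Riemann-sum error remains negligible compared to the exponential gap $e^{-cL}$.
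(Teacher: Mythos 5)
Your proposal is correct in outline, but it takes a genuinely different route from the paper. The paper does not prove the limit shape directly: it proves the Tracy--Widom fluctuation result (\Cref{thm:single_point_fluctuations}) via a refined $L^{1/3}$--scale Airy-kernel approximation of the Fredholm determinant (Propositions \ref{prop:K_Airy_single_point_approximation}--\ref{prop:Fredholm_approximation_Airy}, plus the Riemann-sum estimate \eqref{eq:h_desired_approximation}), and then deduces the law of large numbers as a corollary by sending $r\to\pm\infty$ in \eqref{eq:result_one-point-TW-convergence_intro}. You instead prove the LLN directly via cruder but more elementary steepest-descent estimates: a trace lower bound on a liquid window to kill $\det(\mathbf 1-K|_A)$ on the upper tail, and an exponential diagonal bound in the frozen region plus Markov on the lower tail. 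Your lower-tail estimate is essentially the paper's \Cref{prop:K_is_small_outside_edge} at the macroscopic ($\epsilon L$ rather than $sL^{1/3}$) scale; your upper tail replaces the Airy-kernel machinery with the standard bound $\det(\mathbf 1-K|_B)\le e^{-\operatorname{tr}(K|_B)}$ using that eigenvalues of a restricted determinantal kernel lie in $[0,1]$. The trade-off is clear: the paper's route is heavier but yields the fluctuation theorem for free; yours is lighter and would suffice if only the LLN were wanted, but does not give fluctuations.

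Two small technical points to tighten. First, your window $B=\mathbb Z\cap((\mathfrak h-\eta)L,(\mathfrak h-\eta+\epsilon/2)L]$ abuts the edge $\mathcal H=\mathfrak h$ where the sine-kernel density tends to $0$, so the claimed uniform lower bound $K(x,x)\ge c>0$ on all of $B$ is not quite right; either shrink $B$ to stay at scaled distance $\ge\epsilon/4$ from the edge, or replace the uniform pointwise bound by the integral bound $\operatorname{tr}(K|_B)\sim L\int(\text{density})\gtrsim L$, which is still valid. Second, as you acknowledge, the real work is producing explicit admissible contours through the complex-conjugate (liquid) and real (frozen) saddles compatible with the pole and ordering constraints, and a uniform $\Re S$ estimate; the paper handles this exactly via \Cref{prop:K_contours_rewriting} and \Cref{lemma:Re_S_L_z_estimate,lemma:Re_S_L_w_estimate,lemma:fourth_derivative_nonzero}, and you would need the analogous (and in the liquid case slightly different) deformations to make your scheme rigorous.
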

\Cref{thm:limit_shape_result}
follows from 
the fluctuation result 
(\Cref{thm:single_point_fluctuations}) which 
is formulated next.

\subsection{Asymptotic fluctuations}
\label{sub:limit_distributions}

Using the notation of \Cref{sub:limit_shape},
define for $0<\tau<\tau_e(\eta)$:
\begin{equation}
	\label{eq:d_variance_in_TW}
	\mathfrak{d}=\mathfrak{d}(\tau,\eta):=
	\left( 
		\int_0^\eta 
		\frac{\mathfrak{z}^2(\tau,\eta)\boldsymbol\xi(y)}
		{\left( \boldsymbol\xi(y)-\mathfrak{z}(\tau,\eta) \right)^3}\,dy
	\right)^{\frac13}>0.
\end{equation}
Note that this quantity is also continuous in $\eta$ 
similarly to \Cref{rmk:continuity_in_eta}.1.
The following is the main asymptotic fluctuation result of the 
present paper:
\begin{theorem}
	\label{thm:single_point_fluctuations}
	Fix arbitrary $\eta>0$
	and 
	$\tau\in(0,\tau_e(\eta))$.
	If
	the limiting speed function $\boldsymbol\xi(\cdot)$ 
	is piecewise continuously differentiable on $[0,\eta]$,
	then in the regime \eqref{eq:space-time-speed-scaling}
	we have the convergence
	\begin{equation}
		\label{eq:result_one-point-TW-convergence_intro}
		\lim_{L\to+\infty}
		\mathbb{P}
		\left( \frac{h(\tau L,\lfloor \eta L \rfloor )-L \mathfrak{h}(\tau,\eta)}
		{
			\mathfrak{d}(\tau,\eta)
			L^{1/3}
		}
			>-r \right)
		=
		F_{GUE}(r),\qquad 
		r\in \mathbb{R},
	\end{equation}
	where $F_{GUE}$ is the GUE Tracy--Widom distribution 
	\cite{tracy_widom1994level_airy}.
\end{theorem}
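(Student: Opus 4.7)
The plan is to apply the classical steepest descent method to the Fredholm determinantal formula from \Cref{cor:pushTASEP_Fredholm_single_point_formula_intro}. I would first set $N=\lfloor \eta L\rfloor$ and $y=L\mathfrak{h}(\tau,\eta)-r\mathfrak{d}(\tau,\eta)L^{1/3}$, so that the probability in the theorem equals $\det(\mathbf{1}-K)_{\{\ldots,\,y-N\}}$. Substituting $\xi_x=\boldsymbol\xi(x/L)$ into \eqref{eq:K_intro} and absorbing the polynomial prefactors into the exponential, the integrand takes the form $\frac{1}{(z-w)\,z}\exp\bigl(L\mathcal{S}_L(z;X+1)-L\mathcal{S}_L(w;X')\bigr)$, with $X:=x+N$, $X':=x'+N$, and
\[
	\mathcal{S}_L(z;X):=\tau z-\frac{X}{L}\log z+\frac{1}{L}\sum_{a=1}^{N}\log\bigl(z-\boldsymbol\xi(a/L)\bigr).
\]
Under piecewise $C^1$ regularity of $\boldsymbol\xi(\cdot)$, the Riemann sum converges with $O(L^{-1})$ error, uniformly on compact subsets of $\mathbb{C}\setminus [0,\sup\boldsymbol\xi]$, to $\int_0^\eta \log(z-\boldsymbol\xi(y))\,dy$. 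Since $X/L\to \mathfrak{h}$, the limiting action is $\mathcal{S}(z):=\tau z-\mathfrak{h}\log z+\int_0^\eta \log(z-\boldsymbol\xi(y))\,dy$.

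Next I would identify the saddle. A short calculation shows that $\mathfrak{z}(\tau,\eta)<0$ is a \emph{double} critical point of $\mathcal{S}$: the equation $\mathcal{S}''(\mathfrak{z})=0$ is exactly the relation $\mathfrak{h}=\mathfrak{z}^{2}\int_{0}^{\eta}(\mathfrak{z}-\boldsymbol\xi(y))^{-2}\,dy$ from \Cref{def:limit_shape_h}, while $\mathcal{S}'(\mathfrak{z})=0$ combined with the previous identity is precisely \eqref{eq:critical_solution_t_equation}. Differentiating once more and simplifying via these identities gives $\mathcal{S}'''(\mathfrak{z})=-2\mathfrak{d}^{3}/\mathfrak{z}^{3}>0$ with $\mathfrak{d}$ from \eqref{eq:d_variance_in_TW}. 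Thus near $\mathfrak{z}$ we have $\mathcal{S}(z)-\mathcal{S}(\mathfrak{z})\sim\tfrac{1}{6}\mathcal{S}'''(\mathfrak{z})(z-\mathfrak{z})^{3}$, and after the rescaling $z=\mathfrak{z}(1+L^{-1/3}\mathfrak{d}^{-1}\tilde z)$, $w=\mathfrak{z}(1+L^{-1/3}\mathfrak{d}^{-1}\tilde w)$, $x=L(\mathfrak{h}-\eta)+\mathfrak{d}L^{1/3}u$, $x'=L(\mathfrak{h}-\eta)+\mathfrak{d}L^{1/3}u'$, the usual cubic saddle computation (with the linear-in-$u,u'$ terms coming from $(X-L\mathfrak{h})/L$ and $(X'-L\mathfrak{h})/L$) yields pointwise convergence $\mathfrak{d}L^{1/3}K\to K_{\mathrm{Ai}}(u,u')$ up to a gauge factor that does not affect the Fredholm determinant. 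The region $\{x\le y-N\}$ becomes $\{u\le -r\}$, so after a reflection the Fredholm determinant converges to $\det(\mathbf{1}-K_{\mathrm{Ai}})_{L^{2}[r,\infty)}=F_{GUE}(r)$.

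The technical heart of the argument is the contour deformation together with the trace-class upgrade. I would deform the $z$-contour (a small loop around $0$) so that it passes through $\mathfrak{z}$ along the steepest-descent rays at angles $\pm\pi/3$, and the $w$-contour (originally encircling $0$ and all $\{\xi_b\}$) to pass through $\mathfrak{z}$ at angles $\pm 2\pi/3$ before looping around $[0,\sup\boldsymbol\xi]$. Since $\mathfrak{z}<0$ while all $\xi_b>0$, no poles are crossed, and the constraint $|z|>|w|$ is preserved because the two contours touch only at the saddle and separate along distinct angular sectors. One must then verify that $\mathrm{Re}(\mathcal{S}_L(z)-\mathcal{S}_L(\mathfrak{z}))$ is bounded away from $0$ on the deformed contours outside a neighborhood of $\mathfrak{z}$ by a constant independent of $L$; this is the step where piecewise $C^1$ regularity of $\boldsymbol\xi(\cdot)$ is essential, so that the Riemann-sum error remains $O(L^{-1})$ uniformly along the contours (local analysis at the finitely many jumps of $\boldsymbol\xi'$ causes no trouble). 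The dominant contribution then comes from an $L^{-1/3}$-neighborhood of $\mathfrak{z}$ and produces the Airy kernel, while uniform exponential tails in $u,u'$ (obtained by the same steepest descent with the saddle drifting along $(-\infty,0)$) justify dominated convergence in the Fredholm series. The principal obstacle is precisely this uniform contour/tail construction in the inhomogeneous setting; once it is in place, \Cref{thm:limit_shape_result} follows by integrating the fluctuation statement in $r$, since $F_{GUE}$-convergence for every $r$ forces concentration of $L^{-1}h(\tau L,\lfloor\eta L\rfloor)$ at $\mathfrak{h}(\tau,\eta)$.
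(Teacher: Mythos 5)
Your proposal follows essentially the same steepest-descent / Airy-kernel / Fredholm-determinant strategy as the paper, and the algebraic identification of the double critical point is correct (your computation that $\mathcal S''(\mathfrak z)=0$ encodes \eqref{eq:h_limit_shape_definition}, that $\mathcal S'(\mathfrak z)=0$ then gives \eqref{eq:critical_solution_t_equation}, and that $\mathcal S'''(\mathfrak z)=-2\mathfrak d^3/\mathfrak z^3>0$, all check out). The one genuine technical divergence from the paper is which saddle you expand around: you work throughout with the \emph{continuum} double critical point $\mathfrak z(\tau,\eta)$ of the limiting action $\mathcal S$, absorbing the Riemann-sum error into the exponent from the start, whereas the paper defines the discrete action $S_L$ and its own exact double critical point $\mathfrak z_L(t,N)$ (Lemmas \ref{lemma:root_of_S_L_equation_t_exists}--\ref{lemma:S_L_double_critical_unique}), derives \emph{exact} monotonicity of $\Re S_L$ along the circle and the vertical line through $\mathfrak z_L$ (Lemmas \ref{lemma:Re_S_L_z_estimate}, \ref{lemma:Re_S_L_w_estimate}), and only at the final step (Equation \eqref{eq:h_desired_approximation}) uses piecewise $C^1$ regularity to show $\mathfrak h_L=L\mathfrak h+o(L^{1/3})$, $\mathfrak d_L=\mathfrak d+o(1)$. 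The paper's choice buys exact steepest-descent estimates independent of how fast the Riemann sums converge, and isolates all of the $O(L^{-1})$ approximation into one clean step; your choice keeps the bookkeeping centered on the limiting quantities from the start but forces you to verify that the $O(L^{-1})$ error, once amplified by the factor $L$ in the exponent and probed on the $L^{-1/3}$ scale, is genuinely negligible --- which works, but is exactly the type of uniform control you flag as the ``technical heart.'' Both routes lead to the same conclusion.

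Two smaller points are worth flagging. First, you jump straight to ``$z$-contour a small loop around $0$'' --- but in \eqref{eq:K_intro} for $t\le t'$ the $z$-contour is the \emph{outer} one ($|z|>|w|$); the passage to a small $z$-loop requires the contour swap of \Cref{prop:K_contours_rewriting}, which also produces the residue term that cancels the $\mathbf 1_{t=t'}\mathbf 1_{N=N'}\mathbf 1_{x=x'}$ delta. This cancellation is what makes $K(t,N,\cdot;t,N,\cdot)$ a pure double integral; your sketch skips it, and your remark that ``the constraint $|z|>|w|$ is preserved'' is not quite right after the swap (the relevant constraint has flipped). Second, you keep the $w$-contour as a closed loop around $[0,\sup\boldsymbol\xi]$ touching the $z$-loop at $\mathfrak z$; the paper instead deforms $w$ to a vertical line $\mathfrak z_L+\mathbf i\mathbb R$, which simplifies the tail estimate via the decay of $e^{-tw}$ and avoids a crossing of the two contours. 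Neither gap is fatal, but both are steps the paper makes explicit and that a complete write-up would need.
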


This result implies the law of large numbers 
(\Cref{thm:limit_shape_result}).

Using the determinantal structure of \Cref{thm:push_TASEP_multipoint_correlations_intro},
it is possible to also obtain 
(under slightly more restrictive smoothness conditions on $\boldsymbol\xi(\cdot)$)
multipoint asymptotic fluctuation results along space-like paths.
These fluctuations 
are governed by the top line of the Airy$_2$ line ensemble \cite{PhahoferSpohn2002}. 
We will not focus on this result as it is a standard (by now)
extension of the single-point fluctuations of \Cref{thm:single_point_fluctuations}.
The extension readily follows from the determinantal structure together with a
double contour integral kernel.
For example, see 
\cite{ferrari2003step} or
\cite{BorFerr08push},
\cite{Borodin2008TASEPII}
for such computations for random tilings and TASEP-like particle systems, respectively.

\begin{remark}
	\Cref{thm:single_point_fluctuations}
	means that the insertion of inhomogeneity does not
	affect the fluctuation behavior of the PushTASEP
	compared to the
	homogeneous case.
	On the other hand, the inhomogeneity we consider in this paper \eqref{eq:space-time-speed-scaling}
	is relatively ``mild'' --- it varies only macroscopically but not
	microscopically, and also is bounded (so that
	behavior like
	\Cref{fig:limsh2} is out of the present scope).
	It would be interesting to see if less regular inhomogeneity
	might lead to different fluctuation behavior, whether in the regime 
	of \Cref{thm:single_point_fluctuations}, or around the 
	``edge'' $(\tau_e(\eta)L,\lfloor  \eta L \rfloor )$.
	At this edge in the homogeneous case one sees 
	fluctuations on the scale
	$L^{1/2}$
	described by the largest
	eigenvalues of GUE random matrices,
	and this behavior should persist in the presence of ``mild'' inhomogeneity~\eqref{eq:space-time-speed-scaling}.
\end{remark}

\subsection{Outline}

In \Cref{sec:detSchurStructure} we describe the connection between the inhomogeneous
PushTASEP and Schur processes, and establish
\Cref{thm:push_TASEP_multipoint_correlations_intro}. 
In 
\Cref{sec:limit_shape_single_fluctuations}
we perform asymptotic analysis
and 
establish \Cref{thm:limit_shape_result,thm:single_point_fluctuations}
on the limit shape and asymptotic fluctuations.
In
\Cref{sec:app_check_hydrodynamics} we check that the limiting density $\uprho$
defined in \Cref{sub:limit_shape} formally satisfies the hydrodynamic equation
coming from the inhomogeneous PushTASEP.

\subsection{Acknowledgments}

I am grateful to Konstantin Matveev for an insightful remark on connections
with RSK column insertion, and to Alexei Borodin, Alexey Bufetov,
Patrik Ferrari, Alisa Knizel, and Axel Saenz for helpful discussions. 
I am also grateful to anonymous referees for valuable suggestions.
The work was partially supported by
the NSF grant DMS-1664617.

\section{Schur processes and inhomogeneous PushTASEP}
\label{sec:detSchurStructure}

Here we present a self-contained proof of
\Cref{thm:push_TASEP_multipoint_correlations_intro} which follows from
results on Schur processes
\cite{okounkov2003correlation} and the two-dimensional stochastic particle dynamics
introduced in \cite{BorFerr2008DF}.

\subsection{Young diagrams}
\label{appsub:Young_diagrams}

A partition is a nonincreasing integer sequence of the form 
$\lambda=(\lambda_1\ge\ldots\ge\lambda_{\ell(\lambda)}>0)$. The number of nonzero
parts $\ell(\lambda)$ (which must be finite) is called the length of a
partition. Partitions are represented by \emph{Young diagrams}, such that
$\lambda_1,\lambda_2,\ldots $ are lengths of the successive rows. The column
lengths of a Young diagram are denoted by $\lambda_1'\ge\lambda_2'\ge\ldots$. 
They form the \emph{transposed} Young diagram $\lambda'$. See
\Cref{fig:YD_example} for an illustration.

\begin{figure}[htpb]
	\centering
	\includegraphics[width=.2\textwidth]{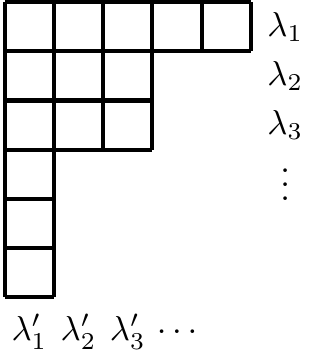}
	\caption{A Young diagram $\lambda=(5,3,3,1,1,1)$ for which the
		transposed diagram is $\lambda'=(6,3,3,1,1)$.}
	\label{fig:YD_example}
\end{figure}

\subsection{Schur functions}
\label{appsub:Schur_functions}

For each Young diagram $\lambda$, let $s_\lambda$ be the corresponding Schur
symmetric function \cite[Ch. I.3]{Macdonald1995}. Evaluated at $N$
variables $u_1,\ldots,u_N$ (where $N\ge \ell(\lambda)$ is arbitrary),
$s_\lambda$ becomes the symmetric polynomial
\begin{equation}
	\label{eq:schur_def}
	s_\lambda(u_1,\ldots,u_N) =
	\frac{\det[u_i^{\lambda_j+N-j}]_{i,j=1}^N}{\det[u_i^{N-j}]_{i,j=1}^N}.
\end{equation}
If $N<\ell(\lambda)$, then $s_\lambda(u_1,\ldots,u_N )=0$ by definition. When
all $u_i\ge0$, the value $s_\lambda(u_1,\ldots,u_N )$ is also nonnegative.

Along with evaluating Schur functions at finitely many variables, we also need
their \emph{Plancherel specializations} defined as
\begin{equation*}
	s_\lambda(\mathrm{Pl}_t):=
	\lim_{K\to+\infty}s_\lambda\Big( 
		\underbrace{
			\frac{t}{K},\ldots,\frac{t}{K} 
		}_{\textnormal{$K$ times}} 
	\Big),
	\qquad 
	t\in \mathbb{R}_{\ge0}.
\end{equation*}
This limit exists for every $\lambda$ (for example, see 
\cite[Section 2.1.4]{okounkov2001infinite}). 
It can be expressed through the number of standard Young
tableaux of shape~$\lambda$, which is the same as the dimension of the
corresponding irreducible representation of the symmetric group of order 
$\lambda_1+\lambda_2+\ldots$. The values
$s_\lambda(\mathrm{Pl}_t)$ are nonnegative for all $t\ge0$. When $t=0$, we have
$s_\lambda(\mathrm{Pl}_0)=\mathbf{1}_{\lambda=\varnothing}$.

The Schur functions satisfy Cauchy summation identities. We will need 
the following version:
\begin{equation*}
	\sum_{\lambda}s_\lambda(u_1,\ldots,u_N )s_\lambda(\mathrm{Pl}_t)=
	e^{t(u_1+\ldots+u_N )} ,
\end{equation*}
where the sum runs over all Young diagrams. However, summands
corresponding to $\ell(\lambda)>N$ vanish.
There are also skew Schur symmetric functions $s_{\lambda/\mu}$ which may be
defined 
as expansion coefficients as follows
(since Schur polynomials form a linear basis in the space of symmetric
polynomials in the corresponding variables, we expand $s_\lambda$ as a 
symmetric polynomial in $u_{N+1},\ldots,u_{N+M} $):
\begin{equation*}
	s_{\lambda}(u_1,\ldots,u_{N+M} )=
	\sum_{\mu}s_{\lambda/\mu}(u_1,\ldots,u_N )\,s_{\mu}(u_{N+1},\ldots,u_{N+M} ).
\end{equation*}
The function $s_{\lambda/\mu}$ vanishes unless the Young diagram $\lambda$ contains
$\mu$ (notation: $\lambda\supset\mu$). Skew Schur functions satisfy skew modifications of the Cauchy summation
identity. They also admit Plancherel specializations, and, moreover,
$s_{\lambda/\mu}(\mathrm{Pl}_t)$ is expressed through the number of 
standard tableaux of the skew shape $\lambda/\mu$. We refer to, e.g.,
\cite[Ch I.5]{Macdonald1995} for details.

\subsection{Schur processes}
\label{appsub:Schur_processes}

Here we recall the definition (at appropriate level of generality) of Schur
processes introduced in \cite{okounkov2003correlation}. Let $\xi_\bullet$ be a speed
function as in \Cref{sub:PushTASEP_intro}, and take a down-right path $\{(t_i,N_i)\}_{i=1}^{r}$ 
(\Cref{def:down_right_path}). A \emph{Schur process} associated with these data
is a probability distribution on sequences $(\lambda;\mu)$ of Young diagrams
(see \Cref{fig:Schur_process} for an illustration)
\begin{equation}
	\label{eq:Schur_process_diagrams}
	\varnothing=\mu^{(1)}\subset \lambda^{(1)}\supset \mu^{(2)}\subset
	\lambda^{(2)}\supset\mu^{(3)}\subset \ldots \subset
	\lambda^{(r-1)}\supset\mu^{(r)}=\varnothing
\end{equation}
with probability weights
\begin{equation}
	\label{eq:Schur_process_weights}
	\begin{split}
		\mathsf{SP}(\lambda;\mu)&=
		\frac{1}{Z_{\mathsf{SP}}}
		\prod_{i=1}^{r-1}s_{\lambda^{(i)}/\mu^{(i)}}(\mathrm{Pl}_{t_{i+1}-t_i})
		\prod_{j=1}^{r-1}s_{\lambda^{(j)}/\mu^{(j+1)}}(\xi_{(N_{j+1},N_{j}]})
		\\
		&=
		\frac{1}{Z_{\mathsf{SP}}}\,
		s_{\lambda^{(1)}/\mu^{(1)}}(\mathrm{Pl}_{t_2-t_1})
		s_{\lambda^{(1)}/\mu^{(2)}}(\xi_{(N_2,N_1]})
		s_{\lambda^{(2)}/\mu^{(2)}}(\mathrm{Pl}_{t_3-t_2}) \ldots \\&\hspace{150pt}\times
		s_{\lambda^{(r-1)}/\mu^{(r-1)}}(\mathrm{Pl}_{t_{r}-t_{r-1}})
		s_{\lambda^{(r-1)}/\mu^{(r)}}(\xi_{(N_{r},N_{r-1}]}).
	\end{split}
\end{equation}
Here $\xi_{(a,b]}$ for $a\le b$ means the string $(\xi_{a+1},\ldots ,\xi_b)$.
Note that some of the specializations above can be empty.
The normalizing
constant in \eqref{eq:Schur_process_weights} is
\begin{equation*}
	Z_{\mathsf{SP}}=\exp\biggl\{ \sum_{i=2}^{r}t_i \bigl(
	\xi_{N_i+1}+\ldots+\xi_{N_{i-1}}\bigr) \biggr\} ,
\end{equation*}
which is computed using the skew Cauchy identity.

\begin{figure}[htpb]
	\centering
	\includegraphics[width=.5\textwidth]{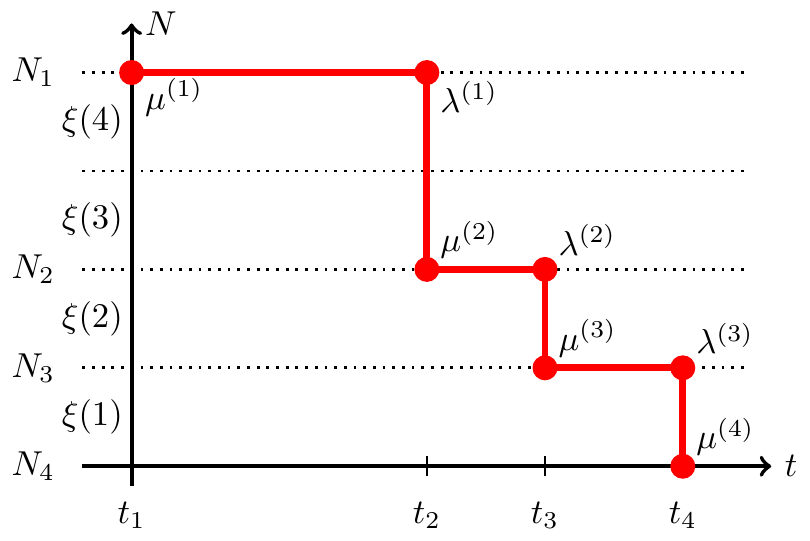}
	\caption{An illustration of the Schur process \eqref{eq:Schur_process_weights} 
	corresponding to a down-right path with $r=4$.
	For convenience we take $t_1=N_r=0$ so that the 
	corresponding Young diagrams are almost sure empty under
	the Schur process.}
	\label{fig:Schur_process}
\end{figure}

The marginal distribution of any $\lambda^{(i)}$ under the Schur
process \eqref{eq:Schur_process_weights} is a \emph{Schur measure}
\cite{okounkov2001infinite} whose probability weights are
\begin{equation}\label{eq:Schur_measure_xis}
	\mathsf{SM}(\lambda^{(i)})=
	e^{-t_{i+1}(\xi_1+\ldots +\xi_{N_i})}
	s_{\lambda^{(i)}}\bigl(\xi_1,\xi_2,\ldots,\xi_{N_{i}}\bigr)
	s_{\lambda^{(i)}}(\mathrm{Pl}_{t_{i+1}}).
\end{equation}

\subsection{Correlation kernel}
\label{appsub:Schur_correlations}

As shown in \cite[Theorem 1]{okounkov2003correlation}, the Schur process such as
\eqref{eq:Schur_process_weights} can be interpreted as a determinantal random
point process, and its correlation kernel is expressed as a double contour
integral. To recall this result, consider the particle configuration
\begin{equation}
	\label{eq:particle_configuration_for_Schur_process}
	\Bigl\{
	\lambda^{(i)}_j-j \colon
	i=1,\ldots,r-1,\ j=1,2,\ldots \Bigr\} \subset
	\underbrace{\mathbb{Z}\times\ldots\times\mathbb{Z}} _{\textnormal{$r-1$
			times}}
\end{equation}
corresponding to a sequence \eqref{eq:Schur_process_diagrams}
(where we sum over all the $\mu^{(j)}$'s).
The configurations $\lambda^{(i)}_j-j$, $j\ge1$, are infinite
and are densely packed at $-\infty$ (i.e., we append each 
$\lambda^{(i)}$ by infinitely many zeroes).
Then for any $m$ and any pairwise distinct locations $(l_i,x_i)$,
$i=1,\ldots,m$, where $1\le l_i\le r-1$ and $x_i\in \mathbb{Z}$, we have
\begin{equation*}
	\begin{split}
		&\mathbb{P}\Bigl( \textnormal{there are points of the configuration
		\eqref{eq:particle_configuration_for_Schur_process} at each of the locations
		$(l_i,x_i)$} \Bigr)
		\\&\hspace{280pt}
		=\det \left[ K_{\mathsf{SP}}(l_i,x_i;l_j,x_j) \right]_{i,j=1}^{m}.
	\end{split}
\end{equation*}
The kernel $K_{\mathsf{SP}}$ has the form
\begin{equation}
	\label{eq:kernel_Schur_general}
	K_{\mathsf{SP}}(l,x;l',y)= \frac{1}{(2\pi \mathbf{i})^2}\oint\oint
	\frac{dz\,dw}{z-w}\frac{w^{y}}{z^{x+1}}\frac{\Phi(l,z)}{\Phi(l',w)},
\end{equation}
where
\begin{equation*}
	\Phi(l,z)=e^{z t_{l+1}}\prod_{i=1}^{N_{l}}(1-z^{-1}\xi_i).
\end{equation*}
The integration contours in 
\eqref{eq:kernel_Schur_general}
are positively oriented simple closed curves around $0$,
the contour $w$ in addition encircles $\{\xi_x\}_{x\in\mathbb{Z}_{\ge1}}$,
and on these contours $|z| > |w|$ for $l\le l'$ and $|z| < |w|$ for $l > l'$.

\subsection{Coupling PushTASEP and Schur processes}
\label{appsub:Schur_dynamics}

Fix a speed function $\xi_\bullet$ as above. We will consider (half continuous
Schur) \emph{random fields of Young diagrams}
\begin{equation*}
	\{\lambda^{(t,N)}\colon t\in \mathbb{R}_{\ge0}, N\in
	\mathbb{Z}_{\ge1}\}
\end{equation*}
satisfying the following properties:
\begin{enumerate}
	\item (\emph{Schur field property})
	      For any down-right path $\{(t_i,N_i)\}_{i=1}^{r}$, the joint distribution of the Young diagrams
	      $\lambda^{(i)}=\lambda^{(t_{i+1},N_i)}$ is 
				described by 
				the Schur process
				corresponding to this down-right path.
				Note that this almost surely enforces the boundary conditions
				$\lambda^{(0,N)}=\lambda^{(t,0)}\equiv\varnothing$, and
	      also forces each
				diagram $\lambda^{(t,N)}$ to have at most $N$ parts.
	\item (\emph{PushTASEP coupling property})
	      The collection of random variables
	      \begin{equation}
		      \label{eq:BF_push_coupling_1}
		      \{N-\lambda_1'^{(t,N)}\colon t\in \mathbb{R}_{\ge0},
		      N\in \mathbb{Z}_{\ge1}\}
	      \end{equation}
	      (where $\lambda_1'^{(t,N)}$ is the length
				of the first column of $\lambda^{(t,N)}$)
				has the same
	      distribution as the values of the height function
	      \begin{equation}
		      \label{eq:BF_push_coupling_2}
		      \{h(t,N)\colon t\in \mathbb{R}_{\ge0}, N\in
		      \mathbb{Z}_{\ge1}\}
	      \end{equation}
	      in the inhomogeneous PushTASEP 
				having the speed function $\xi_\bullet$
				and started from the empty initial configuration.
\end{enumerate}

The first property states that a field couples together Schur processes with
different parameters in a particular way, and the second property requires a field to possess
additional structure relating it to the PushTASEP. The random field point of
view was recently useful in \cite{BorodinBufetovWheeler2016},
\cite{BufetovMatveev2017}, \cite{BufetovPetrovYB2017}, \cite{BufetovMucciconiPetrov2018} in 
discovering and studying
particle
systems powered by generalizations of Schur processes.

The above two properties do not determine a field uniquely.
In fact, there exist several constructions of fields satisfying 
these properties.
They lead to \emph{different} joint distributions of all the diagrams
$\{\lambda^{(t,N)}\}$. However, due to the Schur field property, 
along down-right paths the joint distributions of diagrams
are the same.

The oldest known such construction is based on the column RSK insertion.
Connections between RSK, random Young diagrams, and stochastic particle systems
can be traced \cite{Vershik1986}, see also \cite{OConnell2003},
\cite{OConnell2003Trans}. 
Another field coupling Schur processes and the PushTASEP was suggested
in \cite{BorFerr2008DF} based on an idea \cite{DiaconisFill1990} of stitching together
Markov processes connected by a Markov projection operator. A unified
treatment of these two approaches was performed in \cite{BorodinPetrov2013NN},
see also \cite{OConnellPei2012}. 
A variation of the field of
\cite{BorFerr2008DF} based on the Yang-Baxter equation was suggested recently
in \cite{BufetovPetrovYB2017} (for Schur processes, as well as for their
certain two-parameter generalizations),
and further extended in 
\cite{BufetovMucciconiPetrov2018}.
Since either of these approaches
suffices for our purposes, let us outline the simplest 
one from \cite{BorFerr2008DF}.

\medskip

Fix $K\ge 1$, and consider the restriction of the field to
the first $K$ horizontal levels. Interpret $t$ as continuous time, and the
integers $\{\lambda^{(t,N)}_i\colon 1\le N\le K, 1\le i\le N\}$ as a
two-dimensional time-dependent array.\footnote{If a Young diagram
$\lambda^{(t,N)}$ has less than $N$ parts, append it by zeroes.} We will
describe a Markov evolution of this array. Throughout the evolution, the
integers will almost surely satisfy the interlacing constraints
$\lambda^{(t,i)}_{j+1}\le 
\lambda^{(t,i-1)}_j\le
\lambda^{(t,i)}_{j}$
for all $i,j$ and at all times $t$.
These interlacing constraints are visualized in \Cref{fig:new_interlacing}.
\begin{figure}[htpb]
	\centering
	\includegraphics[width=.65\textwidth]{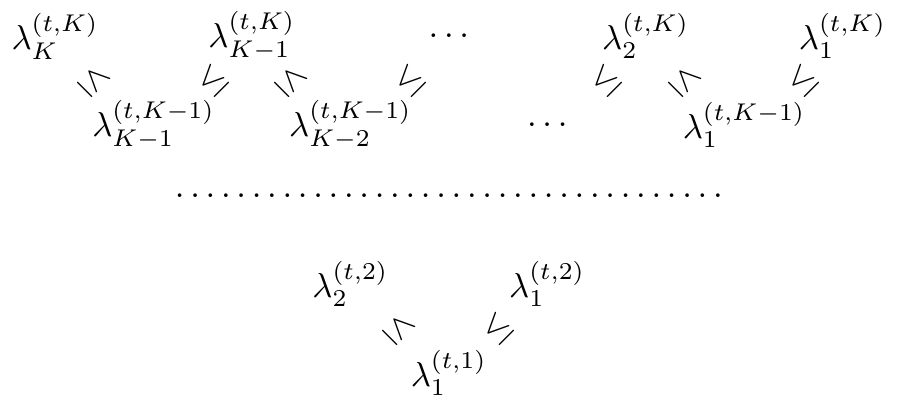}
	\caption{Interlacing array.}
	\label{fig:new_interlacing}
\end{figure}

The array evolves as follows. 
Each of the integers at each level $1\le N\le K$ has an independent
exponential clock with rate $\xi_N$. When the clock of $\lambda^{(t,N)}_j$
rings (almost surely, at most one clock can ring at a given time moment since
the number of clocks is finite), its value is generically incremented by one. In
addition, the following mechanisms are at play to preserve interlacing in the
course of the evolution:
\begin{itemize}
	\item
	      (\emph{blocking}) If
	      $\lambda^{(t,N)}_j=\lambda^{(t,N-1)}_{j-1}$ before the
	      increment of $\lambda^{(t,N)}_j$, then this increment is
	      suppressed;
	\item
	      (\emph{mandatory pushing})
	      If
	      $\lambda^{(t,N)}_j=\lambda^{(t,N+1)}_j=
				\ldots=\lambda^{(t,N+m)}_j $ for some
	      $m\ge 1$ before the increment of $\lambda^{(t,N)}_j$, then along
	      with adding one to $\lambda^{(t,N)}_j$, we also increment by one
	      each of $\lambda^{(t,N+1)}_j,\ldots,\lambda^{(t,N+m)}_j$.
\end{itemize}
Thus described Markov processes are compatible for various $K$, and so they
define a random field $\lambda^{(t,N)}$, $t\in \mathbb{R}_{\ge0}$, $N\in
\mathbb{Z}_{\ge1}$. From \cite{BorFerr2008DF} 
(see also, e.g., \cite[Section 2]{BorodinPetrov2013NN}
for a relatively brief outline of the general formalism)
it follows that the collection of
random Young diagrams $\{\lambda^{(t,N)}\}$
satisfies the Schur field property, i.e., its distributions along
down-right paths are given by Schur processes.

\begin{remark}
	Note that the interlacing inequalities in \Cref{fig:new_interlacing}
	are non-strict, while after the shifting as in \eqref{eq:particle_configuration_for_Schur_process}
	some of these inequalities between consecutive levels become strict.
\end{remark}

\begin{proof}[Proof of the PushTASEP coupling property.]
	Let us now prove that the just constructed collection $\{\lambda^{(t,N)}\}$
	of Young diagrams satisfies the PushTASEP coupling property.
	Observe that $N-\lambda_1'(t,N)$ is the number of zeroes in the $N$-th
	row in the array in \Cref{fig:new_interlacing}. Due to interlacing, 
	for each fixed $t$ we can
	interpret $\tilde{h}(t,N):=N-\lambda_1'(t,N)$ as the height function
	of a particle configuration
	$\tilde{\mathsf{x}}(t)=\{\tilde{\mathsf{x}}_i(t)\}_{i\ge1}$ in
	$\mathbb{Z}_{\ge1}$, with at most one particle per site.
	The initial condition is
	$\tilde{\mathsf{x}}_i(0)=i$, $i\ge1$.
	That is, we can determine
	$\tilde{\mathsf{x}}$ from
	$\tilde{h}$ using \eqref{eq:height_function_def}.

	The time evolution of the particle configuration
	$\tilde{\mathsf{x}}(t)$ is recovered from the field
	$\lambda^{(t,N)}$. 
	First, observe that any change in
	$\tilde{\mathsf{x}}$ can come only from the exponential clocks ringing at the
	rightmost zero elements of the interlacing array. There are two cases. If
	$\tilde{h}(t,N)=\tilde{h}(t,N-1)$, then the rightmost clock
	at zero on level $N$ corresponds to a blocked increment, which agrees with the fact
	that $\tilde{\mathsf{x}}$ has no particle at location $N$. If, on the other
	hand, $\tilde{h}(t,N)=\tilde{h}(t,N-1)+1$, then there is a
	particle in $\tilde{\mathsf{x}}$ at $N$ which can jump to the right by one. This happens at
	rate $\xi_N$. If this particle at $N$ jumps and, moreover,
	$\tilde{h}(t,N+1)=\tilde{h}(t,N)+1$, then the particle at
	$N+1$ which is also present in $\tilde{\mathsf{x}}$ is pushed by one to the right, and so
	on. See \Cref{fig:jumping_push} for illustration.

	\begin{figure}[htpb]
	\centering
	\includegraphics[width=.9\textwidth]{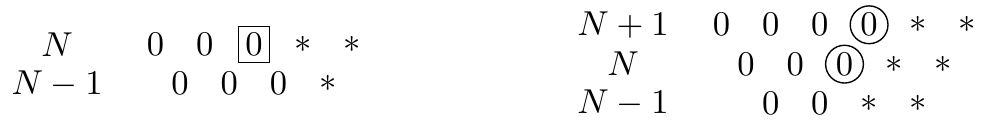}
	\caption{%
		Left: in the interlacing array the framed zero is blocked and cannot
		increase. This corresponds to no particle in $\tilde{\mathsf{x}}(t)$ at $N$.
		Right: the circled zero at level $N$ decides to increase at rate $\xi_N$,
		and forces the circled zero at level $N+1$ to increase, too. 
		In $\tilde{\mathsf{x}}(t)$ this
		corresponds to a jump of the particle at $N$ which then pushes a particle
		at~$N+1$.%
	}
	\label{fig:jumping_push}
	\end{figure}

	We see that the Markov process $\tilde{\mathsf{x}}(t)$ coincides with the
	PushTASEP in inhomogeneous space $\mathsf{x}(t)$ introduced in
	\Cref{sub:PushTASEP_intro}.
\end{proof}

\begin{remark}
	The field $\lambda^{(t,N)}$ from \cite{BorFerr2008DF} 
	described above 
	has \emph{another} Markov projection 
	to a particle system in $\mathbb{Z}$
	which coincides with the
	PushTASEP with \emph{particle-dependent inhomogeneity}. Namely,
	start the PushTASEP from the step initial configuration
	$\mathsf{x}_i(t)=i$, $i\ge1$, and let the \emph{particle} $\mathsf{x}_i$ have jump 
	rate $\xi_i$. The space is assumed homogeneous, so now variable jump rates are
	attached to particles. Then 
	the joint distribution of the random variables $\{\mathsf{x}_i(t)\}$ for all 
	$t\ge0$, $i\ge1$, coincides with the joint distribution
	of $\{\lambda^{(t,i)}_1+i\}$. In particular, 
	each $\mathsf{x}_N(t)$ has the same distribution as $\lambda_1+N$ under the
	Schur measure 
	$\propto s_\lambda(\xi_1,\ldots,\xi_N )s_\lambda(\mathrm{Pl}_t)$ 
	(this is the same Schur measure as in \eqref{eq:Schur_measure_xis}).
	Asymptotic behavior of PushTASEP with particle-dependent
	jump rates was studied 
	in \cite{BG2011non}
	by means of R\'akos--Sch\"utz type determinantal formulas 
	\cite{rakos2005current}, \cite{BorFerr08push}.

	A \emph{third} Markov projection of the field $\lambda^{(t,N)}$ 
	onto 
	$\{\lambda^{(t,N)}_N-N\}_{N\ge1}$
	recovers TASEP on $\mathbb{Z}$ with particle-dependent speeds.
	We refer to \cite{BorFerr2008DF} for details on these other two Markov projections.
\end{remark}

\subsection{From coupling to determinantal structure}
\label{appsub:from_coupling_to_determinants}

For any random field $\lambda^{(t,N)}$
satisfying
the Schur field property,
the 
determinantal structure result of \cite{okounkov2003correlation} recalled in
\Cref{appsub:Schur_correlations} can be restated as follows:
\begin{theorem}
	\label{thm:correlation_kernel_Schur_dynamics}
	For any $m\in \mathbb{Z}_{\ge1}$ and any collection of pairwise
	distinct locations $\{(t_i,N_i,x_i)\}_{i=1}^{m}\subset
		\mathbb{R}\times\mathbb{Z}\times\mathbb{Z}$ such that $N_1\ge \ldots\ge
		N_m\ge0 $ and $0\le t_1\le \ldots\le t_m$, we have
	\begin{multline*}
		\mathbb{P}
		\Bigl(
		\textnormal{for all $i=1,\ldots,m$, the configuration
			$\{\lambda^{(t_i,N_i)}_j-j\}_{j\ge1}$ contains a particle at $x_i$}
		\Bigr)
		\\=
		\det\left[
			K_{\mathsf{F}}(t_p,N_p,x_p;t_q,N_q,x_q)
			\right]_{p,q=1}^{m},
	\end{multline*}
	where
	\begin{equation}
		\label{eq:K_F_Schur_kernel}
		K_{\mathsf{F}}(t,N,x;s,M,y):= \frac{1}{(2\pi \mathbf{i})^2}
		\oint\oint \frac{dz\,dw}{z-w}\frac{w^{y+M}}{z^{x+N+1}} \exp\bigl\{t z-s w\bigr\}
		\frac{\prod_{a=1}^{N}\bigl(z-\xi_a\bigr)}
		{\prod_{b=1}^{M}\bigl(w-\xi_b\bigr)}.
	\end{equation}
	The integration contours are positively oriented simple closed curves around
	$0$, the $w$ contour additionally encircles
	$\{\xi_x\}_{x\in\mathbb{Z}_{\ge1}}$, and the contours satisfy $|z|>|w|$ for
	$t\le s$ and $|z|<|w|$ for $t>s$.
\end{theorem}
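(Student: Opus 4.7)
The plan is to reduce the statement to the determinantal formula for Schur processes from \cite{okounkov2003correlation} recalled in \Cref{appsub:Schur_correlations}, and then perform a routine algebraic rewriting to convert the kernel $K_{\mathsf{SP}}$ of \eqref{eq:kernel_Schur_general} into the form $K_{\mathsf{F}}$ of \eqref{eq:K_F_Schur_kernel}. The Schur field property guarantees that for any collection $(t_1,N_1),\ldots,(t_m,N_m)$ of points satisfying the down-right ordering $N_1\ge \ldots\ge N_m\ge 0$ and $0\le t_1\le \ldots\le t_m$, the joint law of the Young diagrams $\lambda^{(t_1,N_1)},\ldots,\lambda^{(t_m,N_m)}$ is exactly the Schur process \eqref{eq:Schur_process_weights} (with the indexing convention $\lambda^{(i)}=\lambda^{(t_{i+1},N_i)}$). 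Consequently, the configuration $\{\lambda^{(t_i,N_i)}_j-j\}_{j\ge 1}$ is a slice of the determinantal point process \eqref{eq:particle_configuration_for_Schur_process}, whose correlations are given by the double contour integral kernel $K_{\mathsf{SP}}$.

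Next I would unpack the function $\Phi(l,z)=e^{zt_{l+1}}\prod_{i=1}^{N_l}(1-z^{-1}\xi_i)$ appearing in \eqref{eq:kernel_Schur_general}, writing
\begin{equation*}
\prod_{i=1}^{N_l}\bigl(1-z^{-1}\xi_i\bigr)=z^{-N_l}\prod_{i=1}^{N_l}(z-\xi_i),
\qquad
\prod_{i=1}^{N_{l'}}\bigl(1-w^{-1}\xi_i\bigr)^{-1}=w^{N_{l'}}\prod_{i=1}^{N_{l'}}(w-\xi_i)^{-1}.
\end{equation*}
Absorbing $z^{-N_l}$ into $z^{-(x+1)}$ and $w^{N_{l'}}$ into $w^{y}$, and relabeling $(t_{l+1},N_l)\to(t,N)$, $(t_{l'+1},N_{l'})\to(s,M)$, the integrand becomes
\begin{equation*}
\frac{w^{y+M}}{z^{x+N+1}}\,e^{tz-sw}\,\frac{\prod_{a=1}^{N}(z-\xi_a)}{\prod_{b=1}^{M}(w-\xi_b)},
\end{equation*}
which matches $K_{\mathsf{F}}$ term by term.

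Finally I would check that the contour prescription in \eqref{eq:kernel_Schur_general} transports directly: both $z$ and $w$ circle $0$ positively, the $w$-contour must enclose the $\xi_x$'s (which in the rewriting are now simple poles of the integrand rather than zeros of $1-w^{-1}\xi_b$, but their location is unchanged), and the ordering $|z|>|w|$ versus $|z|<|w|$ depends only on whether $l\le l'$, i.e., $t\le s$. No analytic input beyond Okounkov's formula is needed.

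I do not anticipate a genuine obstacle: the only thing requiring care is the bookkeeping of the index shift between labels $l$ on the down-right path and the corresponding $(t,N)$ coordinates on the field, together with the sign convention for the $z^{N_l}$ factor. Once this alignment is fixed, the identification $K_{\mathsf{SP}}=K_{\mathsf{F}}$ is purely mechanical, and the theorem follows.
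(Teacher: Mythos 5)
Your proposal is correct and takes essentially the same route as the paper: the paper states \Cref{thm:correlation_kernel_Schur_dynamics} explicitly as a ``restatement'' of Okounkov's determinantal formula for Schur processes, and the work you describe --- invoking the Schur field property to identify the law of $\lambda^{(t_i,N_i)}$ along a down-right path with a Schur process, applying the kernel $K_{\mathsf{SP}}$ of \eqref{eq:kernel_Schur_general}, and absorbing the $z^{-N_l}$ and $w^{N_{l'}}$ factors into the monomial prefactors to obtain $K_{\mathsf{F}}$ --- is exactly the algebraic content that restatement leaves implicit.

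One small point worth being careful about, which you flag but do not fully resolve: the equivalence ``$l\le l'$ iff $t\le s$'' is not quite literal along a general down-right path. If two path points share the same time coordinate but different $N$'s (which \Cref{def:down_right_path} allows), then $l>l'$ can occur with $t=s$, and Okounkov's prescription $|z|<|w|$ differs from the stated $|z|>|w|$ for $t\le s$ by the residue at $z=w$. The sharper way to state the contour rule is that it is governed by the lexicographic order of $(t,-N)$, i.e.\ by the ordering of the path index $l$ rather than by $t$ alone. This is an imprecision inherited from the paper's own formulation; it does not affect the applications in \Cref{sec:limit_shape_single_fluctuations} (which use a single $(t,N)$ at a time, so the equal-time, unequal-$N$ case never arises), but if you intend to use the multipoint kernel at equal times you should adopt the index-based contour prescription rather than the purely time-based one.
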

In particular, this theorem applies to the field
from \cite{BorFerr2008DF} recalled in \Cref{appsub:Schur_dynamics}
whose first columns are related to the PushTASEP as in 
\eqref{eq:BF_push_coupling_1}--\eqref{eq:BF_push_coupling_2}.

\subsection{Kernel for column lengths}
\label{appsub:column_kernel}

Let us restate
\Cref{thm:correlation_kernel_Schur_dynamics} in terms of column lengths 
so that we can apply it to PushTASEP.

\begin{proposition}
	\label{prop:involution}
	Let $\lambda$ be a Young diagram. The complement in $\mathbb{Z}$ of
	the point configuration $\{\lambda_j-j\}_{j\ge1}$ is the point configuration
	$\{-\lambda_i'+i-1 \}_{i\ge1}$. The former configuration is densely packed at
	$-\infty$, and the latter one at $+\infty$.
\end{proposition}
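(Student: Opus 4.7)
The plan is to establish three claims in turn: dense packing of each configuration at one infinity, disjointness of the two sets, and that their union is all of $\mathbb{Z}$.

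For the dense packing, since $\lambda$ has only finitely many nonzero parts, $\lambda_j = 0$ for all $j > \ell(\lambda)$, hence $\lambda_j - j = -j$ for large $j$, so $\{\lambda_j - j\}_{j\ge1}$ contains every sufficiently negative integer. Symmetrically, $\lambda_i' = 0$ for $i > \lambda_1$, giving $-\lambda_i' + i - 1 = i - 1$ for large $i$, so $\{-\lambda_i' + i - 1\}_{i\ge1}$ contains every sufficiently large positive integer. Also, both sets are bijective enumerations: $\lambda_j - j$ is strictly decreasing in $j$ (since $\lambda_j \ge \lambda_{j+1}$) and $-\lambda_i' + i - 1$ is strictly increasing in $i$.

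For disjointness, suppose for contradiction that $\lambda_j - j = -\lambda_i' + i - 1$ for some $i,j\ge1$, i.e., $\lambda_j + \lambda_i' = i + j - 1$. I split on whether the cell $(i,j)$ lies in $\lambda$. If $(i,j)\in\lambda$, then $\lambda_j \ge i$ and $\lambda_i' \ge j$, yielding $\lambda_j + \lambda_i' \ge i + j$, a contradiction. If $(i,j)\notin\lambda$, then $\lambda_j \le i-1$ and $\lambda_i' \le j-1$, yielding $\lambda_j + \lambda_i' \le i+j-2$, again a contradiction.

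For the union, fix $k \in \mathbb{Z}$ and set $j_0 := \max\{j \ge 0 : \lambda_j \ge k + j\}$, adopting the convention $\lambda_0 = +\infty$; this maximum is finite because $\lambda_j \to 0$. If $\lambda_{j_0} = k + j_0$, then necessarily $j_0 \ge 1$ (since the $j=0$ case gives $+\infty$), and $k = \lambda_{j_0} - j_0$ lies in the first configuration. Otherwise $\lambda_{j_0} > k + j_0$ and $\lambda_{j_0+1} \le k + j_0$; setting $i := k + j_0 + 1$ (which satisfies $i \ge 1$, since $j_0 = 0$ forces $\lambda_1 \le k$ hence $k \ge 0$), the inequalities $\lambda_{j_0} \ge i$ and $\lambda_{j_0+1} < i$ translate to $\lambda_i' \ge j_0$ and $\lambda_i' < j_0 + 1$, so $\lambda_i' = j_0$ and $k = i - 1 - \lambda_i'$ lies in the second configuration. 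The main obstacle is simply to handle the edge case $j_0 = 0$ in the index bookkeeping, which the convention $\lambda_0 = +\infty$ cleanly absorbs; once that is set up the argument is a one-line case split.
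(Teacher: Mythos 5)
Your proof is correct, and it is a fully written-out version of the ``straightforward verification'' that the paper delegates to \Cref{fig:complement_columns}: that figure places both configurations along the staircase boundary of the Young diagram (horizontal versus vertical unit steps), and your dense-packing/disjointness/covering trichotomy is precisely what the picture encodes, with the ``$(i,j)\in\lambda$'' case split playing the role of asking on which side of the boundary a given lattice step lies. One small remark on the covering step: you justify $i\ge 1$ only in the sub-case $j_0=0$, but the uniform argument is immediate and needs no case split --- in Case~2 maximality of $j_0$ gives $\lambda_{j_0+1}\le k+j_0$, and since $\lambda_{j_0+1}\ge 0$ this forces $k+j_0\ge 0$, hence $i=k+j_0+1\ge 1$.
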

\begin{proof}
	A straightforward verification, see \Cref{fig:complement_columns}.
\end{proof}

\begin{figure}[htpb]
	\centering
	\includegraphics[width=.6\textwidth]{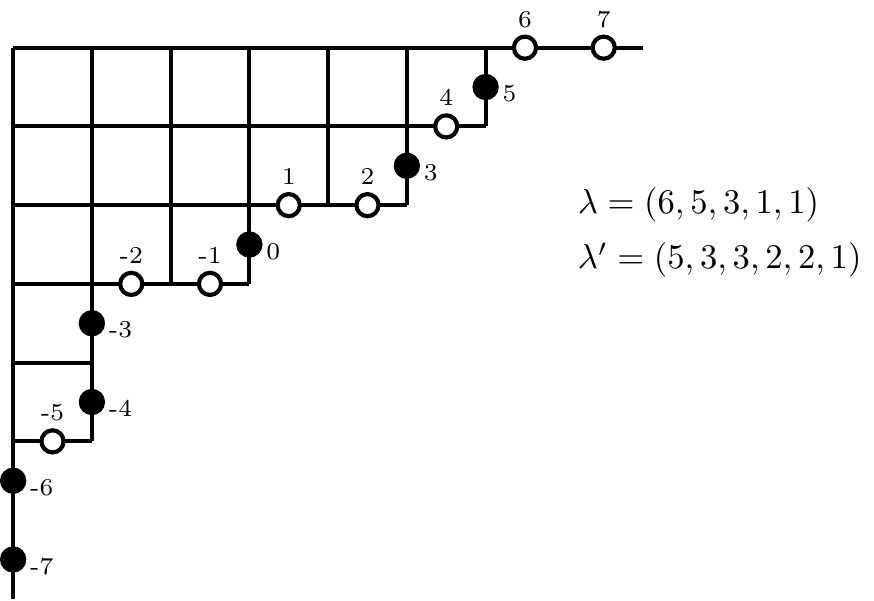}
	\caption{Configuration $\{\lambda_j-j\}$ and its complement
		configuration $\{-\lambda_i'+i-1 \}$, both
		placed at the boundary of the Young diagram $\lambda$.}
	\label{fig:complement_columns}
\end{figure}

The correlation kernel for the complement configuration is given by
$K:=\mathbf{1}-K_{\mathsf{F}}$, where~$\mathbf{1}$ is the identity operator whose
kernel is the delta function. This follows from an observation of S.~Kerov based on the
inclusion-exclusion principle see \cite[Appendix A.3]{Borodin2000b}.
This leads to:

\begin{corollary}
	\label{cor:column_kernel}
	For $\{(t_i,N_i,x_i)\}_{i=1}^{m}\subset
		\mathbb{R}\times\mathbb{Z}\times\mathbb{Z}$ as in
	\Cref{thm:correlation_kernel_Schur_dynamics}, we have
	\begin{equation*}
		\mathbb{P}
		\Bigl(
		\textnormal{$x_i\in\{-\lambda'^{(t_i,N_i)}_j+j-1\}_{j\ge1}$
		for all $i=1,\ldots,m$} \Bigr) = \det\left[
			K(t_p,N_p,x_p;t_q,N_q,x_q)\right]_{p,q=1}^{m},
	\end{equation*}
	where the kernel
	$K(t,N,x;s,M,y):=
	\mathbf{1}_{t=s}
	\mathbf{1}_{N=M}
	\mathbf{1}_{x=y}
	-
	K_{\mathsf{F}}(t,N,x;s,M,y)$
	is given by formula \eqref{eq:K_intro} in the Introduction.
\end{corollary}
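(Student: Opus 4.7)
The plan is to combine Proposition~\ref{prop:involution} with Theorem~\ref{thm:correlation_kernel_Schur_dynamics} via the inclusion-exclusion principle alluded to in the statement. First, by Proposition~\ref{prop:involution}, for each fixed level $(t_i,N_i)$ the point $x_i$ lies in the column configuration $\{-\lambda'^{(t_i,N_i)}_j + j - 1\}_{j\geq 1}$ if and only if $x_i$ is a \emph{hole} of the shifted-rows configuration $\{\lambda^{(t_i,N_i)}_j - j\}_{j\geq 1}$. This reduces the event in the corollary to a joint hole event for the determinantal point process whose occupation correlation functions are furnished by Theorem~\ref{thm:correlation_kernel_Schur_dynamics} with kernel $K_{\mathsf{F}}$.

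Next, since the triples $(t_i,N_i,x_i)$ are pairwise distinct, the single-particle events ``$x_i$ is occupied at level $(t_i,N_i)$'' are distinct, and the ordinary inclusion-exclusion identity gives
\begin{equation*}
  \mathbb{P}\bigl(\text{all $x_i$ are holes}\bigr)
  = \sum_{S\subseteq\{1,\ldots,m\}} (-1)^{|S|}\,
    \det\bigl[K_{\mathsf{F}}(t_i,N_i,x_i;t_j,N_j,x_j)\bigr]_{i,j\in S},
\end{equation*}
where the correlation formula of Theorem~\ref{thm:correlation_kernel_Schur_dynamics} has been substituted on the right for each subset $S$. I would then invoke the standard linear-algebra identity $\sum_S (-1)^{|S|} \det[M]_{S\times S} = \det[I - M]$ to collapse this alternating sum of principal minors into a single $m\times m$ determinant of $\mathbf{1} - K_{\mathsf{F}}$. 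On our finite collection of distinct test points, the identity-operator piece $\mathbf{1}_{t=s}\mathbf{1}_{N=M}\mathbf{1}_{x=y}$ inside $K$ restricts to the Kronecker delta on the index set, producing precisely the kernel of~\eqref{eq:K_intro}.

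I do not anticipate any serious obstacle here: the double-contour-integral description is already granted by Theorem~\ref{thm:correlation_kernel_Schur_dynamics}, and Kerov's complementation trick is a purely finite-dimensional identity insensitive to the dense packing of the row configuration at $-\infty$ or of the column configuration at $+\infty$, since we only test finitely many prescribed locations. The only minor care point is to confirm that inclusion-exclusion applies cleanly because the $m$ events correspond to distinct single-point occupations, so no double counting occurs and the combinatorial sum is unambiguous.
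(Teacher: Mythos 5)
Your proof is correct and takes essentially the same route as the paper: the paper simply cites Kerov's complementation observation (inclusion-exclusion plus the identity $\sum_{S}(-1)^{|S|}\det M_{S\times S}=\det(I-M)$, applied to the occupation kernel $K_{\mathsf{F}}$ restricted to the finitely many test points), and you have spelled out precisely that argument.
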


\begin{proof}[Proof of \Cref{thm:push_TASEP_multipoint_correlations_intro}]
	This theorem now readily follows from
	\Cref{cor:column_kernel} and
	the PushTASEP coupling property of \Cref{appsub:Schur_dynamics}.
\end{proof}

\subsection{Fredholm determinants}
\label{appsub:Fredholm_dets}

Let us now utilize \Cref{appsub:Schur_dynamics} and \Cref{cor:column_kernel}
to write down observables of the PushTASEP in inhomogeneous space
in terms of Fredholm determinants.

First, recall Fredholm determinants on an abstract discrete space
$\mathfrak{X}$. Let $K(x,y)$, $x,y\in \mathfrak{X}$ be a kernel on this space.
We say that the Fredholm determinant of $\mathbf{1}+zK$, $z\in \mathbb{C}$, is
an infinite series
\begin{equation}
	\label{eq:K_Fredholm_determinant}
	\det(1+zK)_{\mathfrak{X}}=1+\sum_{r=1}^{\infty}
	\frac{z^r}{r!}\sum_{i_1\in\mathfrak{X}}\ldots
	\sum_{i_r\in\mathfrak{X}}
	\det\left[ K(i_p,i_q) \right]_{p,q=1}^{r}.
\end{equation}
One may view \eqref{eq:K_Fredholm_determinant} as a formal series, but in our
setting this series will converge numerically. Details on Fredholm
determinants may be found in \cite{Simon-trace-ideals} or
\cite{Bornemann_Fredholm2010}.

Fix a down-right path $\mathfrak{p}=\{(t_i,N_i)\}_{i=1}^{r}$
and consider the space
\begin{equation*}
	\mathcal{X}=
	\mathcal{X}_1\sqcup \ldots \sqcup\mathcal{X}_r,
	\qquad \mathcal{X}_i =\mathbb{Z}.
\end{equation*}
For $y\in \mathcal{X}_i$ set $t(y)=t_i$, $N(y)=N_i$.
View
$\{ -\lambda_j'^{(t_i,N_i)}+j-1 \}_{j\ge1,\; i=1,\ldots,r }$
as a determinantal process $\mathfrak{L}_{\mathfrak{p}}$ on
$\mathcal{X}$ with kernel $K$ \eqref{eq:K_intro} 
in the sense of \Cref{cor:column_kernel}.

Fix an arbitrary $r$-tuple $\vec{y}=(y_1,\ldots,y_r )\in\mathbb{Z}^{r}$.
We can interpret
\begin{equation*}
	\mathbb{P}\Bigl(
		h(t_i,N_i)> N_i-y_i,
		\ i=1,\ldots,r
	\Bigr)=
	\mathbb{P}\Bigl(
		-\lambda_1'^{(t_i,N_i)}>-y_i,\ 
		i=1,\ldots,r
	\Bigr)
\end{equation*}
as the probability of the event that there are no points
in the random point configuration 
$\mathfrak{L}_{\mathfrak{p}}$
in the subset
$\mathcal{X}_{\vec{y}}:=
\bigsqcup_{i=1}^{r}
\left\{ \ldots,-y_i-2,-y_i-1,-y_i  \right\}$
of $\mathcal{X}$.
This probability can be written (e.g., see \cite{Soshnikov2000}) as the
Fredholm determinant
\begin{equation}\label{eq:Fredholm_short_with_indicator}
	\det(1-\chi_{\vec{y}}
	K\chi_{\vec{y}})_{\mathcal{X}},
\end{equation}
where $\chi_{\vec{y}}(x)=\mathbf{1}_{x\le -y_i}$ for $x\in \mathcal{X}_i$
is the indicator of $\mathcal{X}_{\vec{y}}\subset\mathcal{X}$ viewed
as a projection operator acting on functions. 
In particular, for $r=1$
this implies \Cref{cor:pushTASEP_Fredholm_single_point_formula_intro} from the Introduction.

\begin{remark}
	\label{rmk:Fredholms_finite}
	One can check that the sums in the Fredholm determinant 
	\eqref{eq:Fredholm_short_with_indicator}
	(as well as in \eqref{eq:Fredhom_theorem_intro} in the Introduction)
	are actually finite due to vanishing of $K$ far to the left.
\end{remark}

\section{Asymptotic analysis}
\label{sec:limit_shape_single_fluctuations}

In this section we study asymptotic fluctuations of the random height function
of the inhomogeneous PushTASEP at a single space-time point, and prove
\Cref{thm:limit_shape_result,thm:single_point_fluctuations}. We also establish
more general results on approximating the kernel $K$ \eqref{eq:K_intro} by the
Airy kernel under weaker assumptions on $\boldsymbol\xi(\cdot)$.

\subsection{Rewriting the kernel}
\label{sub:rewrite_kernel}

Let us rewrite $K$ given by
\eqref{eq:K_intro} to make the integration contours
suitable for asymptotic analysis via steepest descent method.

\begin{proposition}
	\label{prop:K_contours_rewriting}
	Let $x'< 0$ and $t'>0$. Then 
	\begin{equation}
		\label{eq:K_contours_rewriting}
		\begin{split}
			&K(t,N,x;t',N',x')
			=
			\mathbf{1}_{t=t'}\mathbf{1}_{N=N'}\mathbf{1}_{x=x'}
			-
			\frac{\mathbf{1}_{t\le t'}}{2\pi\mathbf{i}}
			\oint
			\frac{e^{(t-t')z}dz}{z^{x-x'+N-N'+1}}\prod_{b=N'+1}^N(z-\xi_b)
			\\&\hspace{120pt}-
			\frac{1}{(2\pi\mathbf{i})^2}
			\oint dz \int dw\,
			\frac{e^{tz-t'w}}{z-w}\frac{w^{x'+N'}}{z^{x+N+1}}
			\frac{\prod_{a=1}^{N}(z-\xi_a)}{\prod_{b=1}^{N'}(w-\xi_b)}.
		\end{split}
	\end{equation}
	Here the $z$ contour in both integrals is a positively oriented circle around $0$ (of arbitrary positive radius, say,
	$\delta$),
	and the $w$ contour in the double integral is the vertical line
	$-2\delta+\mathbf{i} \mathbb{R}$
	traversed downwards and located  
	to the left of the $z$ contour.
\end{proposition}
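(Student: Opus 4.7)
\medskip
\emph{Plan.} My approach is to transform the original double integral in \eqref{eq:K_intro} into the target form in \eqref{eq:K_contours_rewriting} by two successive contour deformations, tracking the residues encountered. The indicator $\mathbf{1}_{t\le t'}$ will arise naturally from the different nested configurations of the original $z$ and $w$ contours in the two cases $t\le t'$ and $t>t'$. Without loss of generality I take the original $w$ contour to be a circle of radius $R$ with $R>\max_{b\le N'}\xi_b$ and $R>\delta$, so that it encloses all the relevant $\xi_b$ as well as the target small $z$-circle $\lvert z\rvert=\delta$.

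In Step~1 I shrink the $z$ contour to $\lvert z\rvert=\delta$ with the $w$ contour held fixed. As a function of $z$, the integrand has poles only at $z=0$ (inside both the original and target $z$ contours, hence irrelevant) and at $z=w$. For $t\le t'$, the original $z$ contour is outside the $w$-circle while $\lvert z\rvert=\delta$ sits inside it, so for each $w$ on the $w$-circle the pole $z=w$ lies in the annular region swept out by the deformation, contributing $\operatorname{Res}_{z=w}$. A direct calculation yields
\[
\operatorname{Res}_{z=w}\!\left[\frac{1}{z-w}\frac{w^{x'+N'}}{z^{x+N+1}}e^{tz-t'w}\frac{\prod_{a=1}^N(z-\xi_a)}{\prod_{b=1}^{N'}(w-\xi_b)}\right]
=\frac{e^{(t-t')w}}{w^{x-x'+N-N'+1}}\prod_{a=N'+1}^N(w-\xi_a).
\]
This expression is meromorphic in $w$ with a single pole at $w=0$, so its integral over the $w$ contour (which can be freely deformed to any simple loop around $0$, e.g.\ to the circle of radius $\delta$ with $w$ relabeled $z$) produces exactly the single-integral term in \eqref{eq:K_contours_rewriting}. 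For $t>t'$ the original $z$ contour is already inside $w$, so shrinking $z$ crosses no pole and no single-integral term appears---this is the source of the indicator $\mathbf{1}_{t\le t'}$.

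In Step~2 I deform the $w$ contour, with $z$ now fixed on $\lvert z\rvert=\delta$, from the circle of radius $R$ to the vertical line $-2\delta+\mathbf{i}\mathbb{R}$ traversed downwards. Closing the vertical line by a large right-semicircle produces a positively oriented loop around $\{\Re w>-2\delta\}$, which contains precisely the same poles $\xi_b,0,z$ as the original $w$-circle (note $\Re z\ge -\delta>-2\delta$). Thus the two $w$-integrals agree as long as the semicircle contribution vanishes in the limit. This is the main technical point: parametrizing $w=-2\delta+R'e^{\mathbf{i}\theta}$ with $\theta\in[-\tfrac{\pi}{2},\tfrac{\pi}{2}]$, the $w$-dependent part of the integrand is bounded by a constant times $\lvert w\rvert^{x'-1}e^{-t'R'\cos\theta}$ (the exponent $x'-1$ arising from $w^{x'+N'}/\prod_b(w-\xi_b)\cdot 1/(z-w)$). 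Since $x'<0$ yields polynomial decay $\lvert w\rvert^{x'-1}=O((R')^{-2})$ and $t'>0$ provides exponential decay away from the endpoints $\theta=\pm\tfrac{\pi}{2}$, a standard Jordan's lemma estimate shows the semicircle integral is $o(1)$ as $R'\to\infty$.

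Combining Steps~1 and~2 reproduces \eqref{eq:K_contours_rewriting}. The principal obstacle is the decay estimate on the right semicircle at infinity, which is where both hypotheses $x'<0$ and $t'>0$ are essential; the rest of the argument is a clean bookkeeping of residues under nested-contour deformations.
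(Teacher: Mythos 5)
Your proof is correct and follows essentially the same approach as the paper: first deform the $z$ contour inward (for $t\le t'$) past the pole at $z=w$, picking up a residue which produces the single-integral term, and then open the resulting large $w$-circle into a vertical line using the decay from $x'<0$ and $t'>0$. The paper's version is more compressed—it describes the residue step as exchanging the nesting of the two contours and states the decay on the right semicircle in one line—but the residue computation, the origin of the indicator $\mathbf{1}_{t\le t'}$, and the Jordan-type estimate you give are exactly the details the paper is implicitly relying on.
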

\begin{proof}
	We start from formula \eqref{eq:K_intro} for the kernel.
	For $t\le t'$ (thus necessarily $N\ge N'$ because we consider correlations only
	along down-right paths, cf. \Cref{def:down_right_path})
	the $z$ contour encircles the $w$ contour.
	Note that the integrand does not have poles in $z$ at the $\xi_a$'s.
	Thus, exchanging for $t\le t'$
	the $z$ contour with the $w$ contour at a cost of an additional
	residue, 
	we see that the new contours in the 
	double integral in \eqref{eq:K_intro}
	can be taken as follows:
	\begin{itemize}
		\item 
			the $z$
			contour is a small positive circle around $0$;
		\item
			the $w$ contour is a large positive circle around $0$ and $\{\xi_a\}_{a\ge1}$.
	\end{itemize}
	The additional residue arising for $t\ge t'$
	is equal to the integral of the residue at $z=w$ of the integrand 
	over the single $w$ contour. Because $t\le t'$ and $N\ge N'$, 
	this residue does not have poles at the $\xi_a$'s, and so the integration 
	can be performed over a small contour around $0$.
	Renaming $w$ to $z$ we arrive at the single integral in \eqref{eq:K_contours_rewriting}.

	Finally, in the double integral the $w$ integration 
	contour can be replaced by a vertical line
	because:
	\begin{itemize}
		\item 
			the exponent $e^{-t'w}$ ensures rapid decay of 
			the absolute value of the integrand sufficiently far in the right half plane;
		\item 
			the polynomial factors
			$\frac{w^{x'+N'}}{z-w}\prod_{b=1}^{N'}(w-\xi_b)^{-1}$
			for $x'< 0$ 
			ensure at least quadratic decay of the 
			absolute value of the integrand 
			for sufficiently large $|\Im w|$.
	\end{itemize}
	This completes the proof.
\end{proof}
\begin{remark}
	The assumption $x'< 0$ made in \Cref{prop:K_contours_rewriting}
	agrees with the fact that we are looking at the leftmost 
	points in the determinantal point process
	$\mathfrak{L}_{\mathfrak{p}}$ (\Cref{thm:push_TASEP_multipoint_correlations_intro}),
	and these leftmost points
	almost surely belong to $\mathbb{Z}_{\le0}$.
	At the level of the PushTASEP this corresponds to
	$h(t,N)\le N$. 
	The event $h(t,N)= N$
	(i.e., for which it would be $x'=0$)
	can be excluded, too, since it corresponds to 
	no particles $\le N$ jumping till time $t$. Since $t$
	goes to infinity, this is almost surely impossible.
	We thus assume that $x'< 0$ throughout the text.
\end{remark}

\subsection{Critical points and estimation on contours}
\label{sub:crit_pts_estimates}

Rewrite the integrand in the 
double contour integral in \eqref{eq:K_contours_rewriting}
as 
\begin{equation}\label{eq:integrand_via_S_L}
	\frac{(-1)^{h+h'+N+N'+1}}{z(z-w)}\exp
	\bigl\{ S_L(z;t,N,h)-S_L(w;t',N',h') \bigr\},
\end{equation}
where $h:=x+N$, $h':=x'+N'$, and the function $S_L$ has the form
\begin{equation}
	\label{eq:S_L_action_definition}
	S_L(z;t,N,h):=t z-h\log (-z)+\sum_{a=1}^{N}\log\left( \xi_a-z \right).
\end{equation}
The signs inside logarithms are inserted for future convenience.
The branches of the logarithms are assumed standard, i.e., they have 
cuts along the negative real axis.

We apply the steepest descent
approach (as outlined in, e.g., \cite[Section 3]{Okounkov2002}, in 
a stochastic probabilistic setting)
to analyze the asymptotic behavior of the leftmost points of the
determinantal process $\mathfrak{L}_{\mathfrak{p}}$.
To this end, we consider double critical points of 
$S_L$ which satisfy the following system of equations:
\begin{align}
	\label{eq:S_L_double_critical_equation_t}
	t&=\sum_{a=1}^{N}\frac{\xi_a}{\left( z-\xi_a \right)^2},
	\\
	h&=\sum_{a=1}^{N}\frac{z^2}{\left( z-\xi_a \right)^2}.
	\label{eq:S_L_double_critical_equation_h}
\end{align}

\begin{definition}
	By analogy with \eqref{eq:tau_e_of_eta_defn}, denote 
	$t_e(N):=\sum\limits_{a=1}^N \xi_a^{-1}$.
\end{definition}

In the rest of this subsection we
assume that $N\ge1$ and $0<t<t_e(N)$ are fixed.		

\begin{lemma}
	\label{lemma:root_of_S_L_equation_t_exists}
	Equation \eqref{eq:S_L_double_critical_equation_t}
	has a unique solution in real negative $z$.
\end{lemma}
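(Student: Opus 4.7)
The plan is to view the right-hand side of \eqref{eq:S_L_double_critical_equation_t} as a function of $z$ on $(-\infty,0)$ and show it is a continuous bijection onto $(0, t_e(N))$. Concretely, I would set
\begin{equation*}
	f(z) := \sum_{a=1}^{N} \frac{\xi_a}{(z-\xi_a)^2}, \qquad z \in (-\infty, 0),
\end{equation*}
and establish three facts: $f$ is continuous on $(-\infty,0)$ (clear, since each $\xi_a>0$ keeps the denominators away from zero for $z<0$), $f$ has the correct limits at the endpoints, and $f$ is strictly increasing.

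For the endpoint limits, I would note that as $z \to -\infty$ each term $\xi_a/(z-\xi_a)^2 \to 0$, so $f(z) \to 0$; while as $z \to 0^-$ each term tends to $\xi_a/\xi_a^2 = 1/\xi_a$, giving $f(z) \to \sum_{a=1}^N \xi_a^{-1} = t_e(N)$ by the definition just introduced. For strict monotonicity, I would differentiate term by term,
\begin{equation*}
	f'(z) = \sum_{a=1}^{N} \frac{-2\xi_a}{(z-\xi_a)^3},
\end{equation*}
and observe that for $z<0<\xi_a$ the denominator $(z-\xi_a)^3$ is negative, so every summand is strictly positive and hence $f'(z)>0$ on $(-\infty,0)$.

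Combined, these three facts show that $f\colon(-\infty,0)\to(0,t_e(N))$ is a strictly increasing continuous bijection, so for any $t\in(0,t_e(N))$ there is exactly one $z\in(-\infty,0)$ with $f(z)=t$. This is essentially the same monotonicity argument used in the proof of \Cref{lemma:z_critical_solution_using_t_equation}, only now with the discrete sum in place of the integral over $y$; I do not expect any genuine obstacle, as the positivity of $\xi_\bullet$ makes every sign explicit.
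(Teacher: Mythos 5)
Your proof is correct and is exactly the monotonicity argument the paper invokes: the paper's own proof is the one-line statement that it ``Follows by monotonicity similarly to \Cref{lemma:z_critical_solution_using_t_equation},'' and your write-up simply makes that monotonicity explicit (endpoints $0$ and $t_e(N)$, strictly positive derivative on $(-\infty,0)$). No gap; you have just spelled out what the paper leaves implicit.
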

\begin{proof}
	Follows by monotonicity similarly to
	\Cref{lemma:z_critical_solution_using_t_equation}.
\end{proof}
Denote the solution afforded by \Cref{lemma:root_of_S_L_equation_t_exists}
by $\mathfrak{z}_L=\mathfrak{z}_L(t,N)$.
Also
denote by $\mathfrak{h}_L=\mathfrak{h}_L(t,N)$
the result of substitution of $\mathfrak{z}_L(t,N)$ into 
the right-hand side of \eqref{eq:S_L_double_critical_equation_h}.

\begin{lemma}
	\label{lemma:S_L_double_critical_unique}
	The function $z\mapsto S_L(z;t,N,\mathfrak{h}_L(t,N))$ 
	has a double critical point at 
	$\mathfrak{z}_L(t,N)$
	which is its only critical point on the negative real half-line.
	All other critical points (of any order) 
	of this function are real and positive.
\end{lemma}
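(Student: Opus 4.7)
The plan is to reduce the statement to a polynomial root count. Introducing the auxiliary function $g(z) := z\, S_L'(z;t,N,\mathfrak{h}_L)$, a short rewriting gives
\[
	g(z) = tz + (N-\mathfrak{h}_L) + \sum_{a=1}^{N}\frac{\xi_a}{z-\xi_a},
\]
and clearing denominators via the product over the \emph{distinct} values of $\{\xi_a\}$ produces a polynomial of degree $d+1$ with leading coefficient $t>0$, where $d$ is the number of distinct $\xi$-values. Since neither $z=0$ nor any $\xi_a$ is itself a critical point of $S_L$, the zeros of this polynomial correspond with multiplicity to the critical points of $S_L(\cdot;t,N,\mathfrak{h}_L)$. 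So $S_L$ has exactly $d+1$ critical points in $\mathbb{C}$, and the strategy is to exhibit $d+1$ real ones with the claimed locations, which will force all critical points to be real.

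First I would verify directly that $\mathfrak{z}_L$ is a double critical point. That $S_L''(\mathfrak{z}_L)=0$ is exactly the defining relation \eqref{eq:S_L_double_critical_equation_h}. For $S_L'(\mathfrak{z}_L)=0$, combine \eqref{eq:S_L_double_critical_equation_t} and \eqref{eq:S_L_double_critical_equation_h} inside $S_L'(z) = t - \mathfrak{h}_L/z + \sum_a 1/(z-\xi_a)$: a few lines of algebra give $t\mathfrak{z}_L - \mathfrak{h}_L = -\sum_a \mathfrak{z}_L/(\mathfrak{z}_L-\xi_a)$, hence $g(\mathfrak{z}_L)=0$ and (as $\mathfrak{z}_L \neq 0$) $S_L'(\mathfrak{z}_L)=0$.

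The core step is a sign analysis of $g$ on the real axis based on $g'(z) = t-R(z)$ with $R(z):=\sum_a \xi_a/(z-\xi_a)^2>0$. On $(-\infty,0)$ and on $(0,\xi_{\min})$, where $\xi_{\min}:=\min_a \xi_a$, the function $R$ is strictly increasing by the same monotonicity argument as in \Cref{lemma:root_of_S_L_equation_t_exists}, rising from $0$ at $-\infty$, through $t_e(N)$ at $z=0$, to $+\infty$ at $\xi_{\min}^-$. Since $t\in(0,t_e(N))$, $g'$ has a unique zero $\mathfrak{z}_L$ on $(-\infty,0)$ and is strictly negative on $(0,\xi_{\min})$. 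Combined with the boundary values $g(-\infty)=-\infty$, $g(0)=-\mathfrak{h}_L<0$, $g(\xi_{\min}^-)=-\infty$, and $g(\mathfrak{z}_L)=0$, this shows $\mathfrak{z}_L$ is a tangential local maximum and the only real zero of $g$ on $(-\infty,\xi_{\min})$.

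Finally, between any two consecutive distinct $\xi$-values $\xi^{(k)}<\xi^{(k+1)}$, the function $g$ runs from $+\infty$ to $-\infty$ and hence has at least one real zero, contributing $\ge d-1$ real zeros in $(\xi_{\min},\xi_{\max})$. Added to the double zero at $\mathfrak{z}_L$, this accounts for at least $d+1$ real critical points, matching the total degree; hence every count must be tight, so there are no zeros on $(\xi_{\max},+\infty)$, exactly one simple zero in each intermediate interval, and no non-real critical points. The only mildly delicate point will be the bookkeeping when several $\xi_a$'s coincide, which is handled uniformly by grouping equal values as above — the degree $d+1$ is exactly what the count requires to close.
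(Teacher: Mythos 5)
Your proof is correct and follows essentially the same strategy as the paper's: clear denominators to obtain a degree-$(d+1)$ polynomial, locate a root in each of the $d-1$ intermediate intervals between consecutive distinct $\xi$-values, add the double root at $\mathfrak{z}_L$, and conclude by tightness of the degree count. The explicit verification that $\mathfrak{z}_L$ is a double critical point and the detailed sign analysis of $g=zS_L'$ on $(-\infty,\xi_{\min})$ are niceties the paper compresses into a single sentence (and which are in fact already forced by the tight count), but the route is the same.
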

\begin{proof}
	The fact that $\mathfrak{z}_L(t,N)$ is a double critical point
	of $S_L(\cdot ;t,N,\mathfrak{h}_L(t,N))$
	follows from 
	the above definitions.
	It remains to check that 
	all other critical points of $S_L$ are real and positive. 
	Let $0<b_1<\ldots<b_k $ be all of the distinct values of $\xi_1,\ldots,\xi_N$.
	Then equation $S_L'(z)=0$, that is,
	\begin{equation}\label{eq:S_L_single_point_equation}
		\frac{h}{z}=t+\sum_{a=1}^{N}\frac{1}{z-\xi_a}
	\end{equation}
	is equivalent to a polynomial equation of degree $k+1$ with real coefficients.
	The right-hand side of \eqref{eq:S_L_single_point_equation}
	takes all values from $-\infty$ to $+\infty$ on each of the $k-1$
	intervals of the form $(b_i,b_{i+1})$.
	Therefore, \eqref{eq:S_L_single_point_equation} has at least $k-1$ positive real roots.
	Since $\mathfrak{z}_L$ is a double root
	when $h=\mathfrak{h}_L$, 
	we have described at least $k+1$ real roots
	to the equation $S_L'(z)=0$, i.e., all of its roots. This completes the proof.
\end{proof}

Keeping $t,N$ fixed, plug $h=\mathfrak{h}_L(t,N)$ into $S_L$,
and 
using \eqref{eq:S_L_double_critical_equation_t}--\eqref{eq:S_L_double_critical_equation_h}
rewrite the result
in terms of $\mathfrak{z}_L$:
\begin{equation*}
	S_L(z;t,N,\mathfrak{h}_L(t,N))=
	\sum_{a=1}^{N}
	\left[ 
		\frac{z\xi_a}{(\mathfrak{z}_L-\xi_a)^2}
		-
		\frac{\mathfrak{z}_L^2\log (-z)}{(\mathfrak{z}_L-\xi_a)^2}
		+
		\log(\xi_a-z)
	\right].
\end{equation*}
Denote the expression inside the sum by $R(z;\xi_a)$.

\begin{lemma}
	\label{lemma:Re_S_L_z_estimate}
	On the circle through $\mathfrak{z}_L$ centered at the origin, 
	$\Re S_L(z;t,N,\mathfrak{h}_L(t,N))$ 
	viewed as a function of $z$
	attains its maximum at $z=\mathfrak{z}_L$.
\end{lemma}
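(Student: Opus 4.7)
The plan is to exploit the decomposition $S_L(z;t,N,\mathfrak{h}_L) = \sum_{a=1}^N R(z;\xi_a)$ and prove the maximization statement term by term. Since $\Re S_L = \sum_a \Re R(\cdot;\xi_a)$, if each individual $\Re R(\cdot;\xi_a)$ is maximized at $z=\mathfrak{z}_L$ on the circle $|z|=|\mathfrak{z}_L|$, so is the sum.

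First, I would parametrize the circle as $z = \mathfrak{z}_L e^{i\theta}$ with $\theta\in[-\pi,\pi]$, so that $\theta=0$ gives $z=\mathfrak{z}_L$. Because $\Re \log w = \log |w|$, branch choices play no role, and a direct computation of the real parts of the three summands of $R$ gives
\begin{equation*}
\Re R(\mathfrak{z}_L e^{i\theta};\xi_a) = \frac{\xi_a \mathfrak{z}_L \cos\theta}{(\mathfrak{z}_L-\xi_a)^2} + \tfrac{1}{2}\log\bigl(\xi_a^2 - 2\xi_a\mathfrak{z}_L \cos\theta + \mathfrak{z}_L^2\bigr) + C_a,
\end{equation*}
where $C_a$ is independent of $\theta$ (coming from $\Re\log(-z) = \log|\mathfrak{z}_L|$). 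The dependence on $\theta$ is only through $u := \cos\theta \in [-1,1]$, so it suffices to show that
\begin{equation*}
g_a(u) := \frac{\xi_a \mathfrak{z}_L u}{(\mathfrak{z}_L-\xi_a)^2} + \tfrac{1}{2}\log\bigl(\xi_a^2 - 2\xi_a \mathfrak{z}_L u + \mathfrak{z}_L^2\bigr)
\end{equation*}
attains its maximum on $[-1,1]$ at $u=1$.

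Then I would differentiate and combine over a common denominator. Using the identity $\xi_a^2 - 2\xi_a\mathfrak{z}_L u + \mathfrak{z}_L^2 - (\mathfrak{z}_L-\xi_a)^2 = 2\xi_a \mathfrak{z}_L(1-u)$, one obtains
\begin{equation*}
g_a'(u) = \frac{2\xi_a^2 \mathfrak{z}_L^2 (1-u)}{(\mathfrak{z}_L-\xi_a)^2 \bigl(\xi_a^2 - 2\xi_a\mathfrak{z}_L u + \mathfrak{z}_L^2\bigr)}.
\end{equation*}
The denominator is a product of two squared real quantities (the second factor equals $|\xi_a - \mathfrak{z}_L e^{i\theta}|^2$), hence positive on $(-1,1]$, and the numerator is nonnegative on $[-1,1]$ with zero only at $u=1$. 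Consequently $g_a$ is strictly increasing on $[-1,1]$ and attains its maximum precisely at $u=1$, i.e., at $z=\mathfrak{z}_L$. Summing over $a = 1,\ldots,N$ yields the lemma.

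I do not expect a genuine obstacle; the argument reduces to a single-variable calculus computation made possible by the fact that each $R(\cdot;\xi_a)$ is the same function of $\Re z$ on a circle of fixed radius. The only mild subtlety is that, in the degenerate case $|\mathfrak{z}_L|=\xi_a$ for some $a$, the circle passes through the singularity $\xi_a$ of $R(\cdot;\xi_a)$, making $g_a(-1)=-\infty$; this only strengthens the monotonicity and does not affect the location of the maximum.
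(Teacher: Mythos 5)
Your proof is correct and essentially the same as the paper's: both decompose $S_L$ into the terms $R(\cdot;\xi_a)$ and show each $\Re R$ is monotone along the circle, the only cosmetic difference being that you substitute $u=\cos\theta$ while the paper differentiates in the angle $\varphi$ directly (the two derivatives differ by the factor $-\sin\varphi$, giving the same sign conclusion). Your remark on the degenerate case $|\mathfrak{z}_L|=\xi_a$ is a small but welcome addition the paper leaves implicit.
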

\begin{proof}
	For $z=\mathfrak{z}_L e^{\mathbf{i}\varphi}$, we have
	\begin{equation*}
		\frac{\partial}{\partial \varphi}
		\Re R(\mathfrak{z}_L e^{\mathbf{i}\varphi};\xi)
		=
		\frac{2 \xi ^2 \mathfrak{z}_L^2 (\cos \varphi-1)\sin \varphi }
		{(\mathfrak{z}_L- \xi)^2 \left(\xi^2
		+\mathfrak{z}_L^2-2 \mathfrak{z}_L \xi \cos \varphi\right)}\le0
	\end{equation*}
	for $\varphi\in[0,\pi]$ (by symmetry, it suffices to consider only the upper half plane),
	and this derivative is equal to zero only for $\varphi=0$.
	This implies the claim.
\end{proof}
\begin{lemma}
	\label{lemma:Re_S_L_w_estimate}
	On the vertical line 
	$\mathfrak{z}_L+\mathbf{i}\mathbb{R}$
	through 
	$\mathfrak{z}_L$,
	$\Re S_L(w;t,N,\mathfrak{h}_L(t,N))$
	viewed as a function of $w$
	attains its minimum at $w=\mathfrak{z}_L$.
\end{lemma}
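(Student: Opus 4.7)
The plan parallels that of \Cref{lemma:Re_S_L_z_estimate}, with the vertical line $\mathfrak{z}_L+\mathbf{i}\mathbb{R}$ replacing the circle $|z|=|\mathfrak{z}_L|$. I would parametrize $w=\mathfrak{z}_L+\mathbf{i}s$ for $s\in\mathbb{R}$ and write $\Re S_L(w;t,N,\mathfrak{h}_L)$ explicitly. Since $\mathfrak{z}_L<0<\xi_a$, both $-w$ and $\xi_a-w$ have strictly positive real parts along this line, so we stay on the principal branch of the logarithm used in \eqref{eq:S_L_action_definition} and obtain
\begin{equation*}
    \Re\log(-w)=\tfrac12\log(\mathfrak{z}_L^2+s^2),\qquad
    \Re\log(\xi_a-w)=\tfrac12\log\bigl((\xi_a-\mathfrak{z}_L)^2+s^2\bigr),
\end{equation*}
while $\Re(tw)=t\mathfrak{z}_L$ is constant in $s$. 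The resulting function of $s$ is even, so it suffices to show that $\partial_s \Re S_L$ has the same sign as $s$ whenever $s\neq 0$.

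Next, after substituting $\mathfrak{h}_L=\sum_a\mathfrak{z}_L^2/(\mathfrak{z}_L-\xi_a)^2$ (this is \eqref{eq:S_L_double_critical_equation_h} at the critical point) into the coefficient of $\log(\mathfrak{z}_L^2+s^2)$ and differentiating term by term in $s$, a short manipulation over a common denominator would yield an expression of the form
\begin{equation*}
    \partial_s \Re S_L = s^3\sum_{a=1}^{N}
    \frac{(\xi_a-\mathfrak{z}_L)^2-\mathfrak{z}_L^2}
    {(\mathfrak{z}_L-\xi_a)^2\bigl((\xi_a-\mathfrak{z}_L)^2+s^2\bigr)\bigl(\mathfrak{z}_L^2+s^2\bigr)}.
\end{equation*}
The numerator of each summand simplifies to $\xi_a(\xi_a-2\mathfrak{z}_L)$, which is strictly positive because $\xi_a>0>\mathfrak{z}_L$. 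Consequently every summand is strictly positive, $\partial_s\Re S_L$ has the sign of $s$, and $\Re S_L$ is strictly increasing in $|s|$ along the line, giving the claimed strict minimum at $w=\mathfrak{z}_L$.

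I do not foresee a substantive obstacle: the proof reduces to an algebraic identity after the substitution, and the needed positivity comes for free from the sign configuration $\xi_a>0>\mathfrak{z}_L$, exactly as in the circle case handled in \Cref{lemma:Re_S_L_z_estimate}. The only mild point to keep track of is the principal branch of $\log(-w)$, which is unambiguous on the entire line $\mathfrak{z}_L+\mathbf{i}\mathbb{R}$ because $\Re(-w)=-\mathfrak{z}_L>0$ stays uniformly away from the branch cut.
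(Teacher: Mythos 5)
Your proof is correct and follows essentially the same route as the paper: parametrize the vertical line, substitute the double-critical-point value of $\mathfrak{h}_L$, and show the $s$-derivative of $\Re S_L$ has the sign of $s$ term by term. Your simplified per-summand numerator $\xi_a(\xi_a-2\mathfrak{z}_L)>0$ is in fact the clean form of what the paper computes (the paper's displayed expression for $\partial_r\Re R$ appears to contain a typo, but reduces to yours).
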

\begin{proof}
	For $w=\mathfrak{z}_L+\mathbf{i}r$, $r>0$, we have
	\begin{equation*}
		\frac{\partial}{\partial r}
		\Re R(\mathfrak{z}_L+\mathbf{i} r;\xi)
		=
		\frac{
			r^3 \left(\xi^2+\mathfrak{z}_L^2-2 \mathfrak{z}_L\xi 
			-\mathfrak{z}_L\right)-r\mathfrak{z}_L (1-\mathfrak{z}_L) (\xi-\mathfrak{z}_L)^2
		}
		{\left(r^2+\mathfrak{z}_L^2\right) (\mathfrak{z}_L-\xi)^2
		\left(\xi^2+r^2+\mathfrak{z}_L^2-2\mathfrak{z}_L\xi\right)}>0
	\end{equation*}
	(recall that $\mathfrak{z}_L<0$). This implies the claim.
\end{proof}

We need one more statement on derivatives of the real part at the 
double critical point:
\begin{lemma}
	\label{lemma:fourth_derivative_nonzero}
	Along the $w$ and $z$ contours 
	in \Cref{lemma:Re_S_L_z_estimate,lemma:Re_S_L_w_estimate}
	the first three derivatives of 
	$\Re S_L$ vanish at $\mathfrak{z}_L$,
	while the fourth derivative is nonzero.
	\end{lemma}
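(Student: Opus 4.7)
The plan is to exploit the fact that $\mathfrak{z}_L$ is a genuine double (not triple or higher) critical point of the holomorphic function $z \mapsto S_L(z;t,N,\mathfrak{h}_L(t,N))$. Indeed, \Cref{lemma:S_L_double_critical_unique} shows that $S_L'(z)=0$ reduces to a polynomial equation of degree $k+1$ all of whose roots are accounted for by $\mathfrak{z}_L$ (with multiplicity exactly two) and the $k-1$ positive simple critical points. Hence $S_L'(\mathfrak{z}_L)=S_L''(\mathfrak{z}_L)=0$ while $S_L'''(\mathfrak{z}_L)\neq 0$, so the Taylor expansion begins
$$S_L(\mathfrak{z}_L+u)-S_L(\mathfrak{z}_L)=\tfrac{1}{6}S_L'''(\mathfrak{z}_L)\,u^3+\tfrac{1}{24}S_L^{(4)}(\mathfrak{z}_L)\,u^4+O(u^5).$$

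For the $w$-contour $w=\mathfrak{z}_L+ir$, I would plug $u=ir$ into this expansion; the cubic term contributes $-\frac{i}{6}S_L'''(\mathfrak{z}_L)r^3$, which is purely imaginary, so the real part has no $r$, $r^2$, or $r^3$ contribution and equals $\frac{1}{24}S_L^{(4)}(\mathfrak{z}_L)\,r^4+O(r^5)$. Thus the first three $r$-derivatives of $\Re S_L$ at $r=0$ vanish, and the fourth equals $S_L^{(4)}(\mathfrak{z}_L)$. Direct termwise differentiation of the summand $R(z;\xi)$ appearing in the formula for $S_L$ gives
$$R^{(4)}(\mathfrak{z}_L;\xi)=\frac{6\mathfrak{z}_L^2}{\mathfrak{z}_L^4(\mathfrak{z}_L-\xi)^2}-\frac{6}{(\xi-\mathfrak{z}_L)^4}=\frac{6\,\xi(\xi-2\mathfrak{z}_L)}{\mathfrak{z}_L^2(\mathfrak{z}_L-\xi)^4},$$
which is strictly positive since $\mathfrak{z}_L<0<\xi$. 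Summing over $a=1,\dots,N$ yields $S_L^{(4)}(\mathfrak{z}_L)>0$, so the fourth derivative of $\Re S_L$ along the vertical line is nonzero.

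For the $z$-contour $z=\mathfrak{z}_L e^{i\varphi}$, I would Taylor-expand the explicit formula for $\partial_\varphi\Re R$ already computed in the proof of \Cref{lemma:Re_S_L_z_estimate}. Using $\cos\varphi-1=-\varphi^2/2+O(\varphi^4)$ and $\sin\varphi=\varphi+O(\varphi^3)$, the numerator is of order $\varphi^3$ and the denominator tends to $(\mathfrak{z}_L-\xi)^4$, so
$$\frac{\partial}{\partial\varphi}\Re R(\mathfrak{z}_L e^{i\varphi};\xi)=-\frac{\xi^2\mathfrak{z}_L^2}{(\mathfrak{z}_L-\xi)^4}\,\varphi^3+O(\varphi^5)\quad\text{as }\varphi\to 0.$$
Consequently the first three $\varphi$-derivatives of $\Re R$ vanish at $\varphi=0$, and the fourth equals $-6\xi^2\mathfrak{z}_L^2/(\mathfrak{z}_L-\xi)^4$. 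Summing over $a$ gives
$$\frac{d^4}{d\varphi^4}\Re S_L(\mathfrak{z}_L e^{i\varphi};t,N,\mathfrak{h}_L)\bigg|_{\varphi=0}=-6\mathfrak{z}_L^2\sum_{a=1}^{N}\frac{\xi_a^2}{(\mathfrak{z}_L-\xi_a)^4}<0,$$
which is again nonzero.

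The argument is essentially a bookkeeping exercise built directly on the derivative formulas already recorded in \Cref{lemma:Re_S_L_z_estimate,lemma:Re_S_L_w_estimate}, so no genuinely new idea is needed. The only point that demands some care is verifying that the cubic term in the Taylor expansion of $S_L$ contributes purely to the imaginary part along each contour, so that the first potentially nonvanishing real-part derivative really is the fourth; on the vertical line this is immediate from $(ir)^3=-ir^3$, and on the circle it follows from the leading infinitesimal behavior $z-\mathfrak{z}_L=i\mathfrak{z}_L\,\varphi+O(\varphi^2)$ being purely imaginary and from the fact that $S_L'''(\mathfrak{z}_L)$ is real. The signs of the nonvanishing fourth derivatives coincide with what is already dictated by the maximum/minimum properties established in those two lemmas, providing an internal consistency check.
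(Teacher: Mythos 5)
Your proof is correct and lands on the same explicit fourth-derivative values the paper records: $-6\mathfrak{z}_L^2\xi^2/(\mathfrak{z}_L-\xi)^4<0$ per summand on the circle, and the positive $6\xi(\xi-2\mathfrak{z}_L)/\bigl(\mathfrak{z}_L^2(\mathfrak{z}_L-\xi)^4\bigr)$ on the vertical line. The paper simply asserts these by direct termwise differentiation of $\Re R$ (``one readily checks''), whereas you arrive at them by a mild refinement of the same route. For the $w$-contour you invoke the root count from \Cref{lemma:S_L_double_critical_unique} to deduce $S_L'''(\mathfrak{z}_L)\neq 0$, and then observe that since $\mathfrak{z}_L$ is real, $S_L'''(\mathfrak{z}_L)$ is real and the cubic term $(\mathbf{i}r)^3$ contributes only to the imaginary part; combined with $S_L''(\mathfrak{z}_L)=0$, this explains structurally (rather than by computing three derivatives) why $\Re S_L$ is flat to third order in $r$ and why its fourth $r$-derivative equals $S_L^{(4)}(\mathfrak{z}_L)$. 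For the $z$-contour you instead Taylor-expand the explicit $\partial_\varphi\Re R$ formula already displayed in \Cref{lemma:Re_S_L_z_estimate}, a sensible economy that the paper does not spell out but presumably performs implicitly. Both routes are computationally equivalent; yours gains some transparency, at the cost of a little more prose. One tiny inaccuracy: along the vertical line the error in your expansion of $\Re S_L$ is $O(r^6)$ rather than $O(r^5)$, since $(\mathbf{i}r)^5$ is again purely imaginary — this is immaterial to the lemma.
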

\begin{proof}
	One readily checks that
	$(\frac{\partial}{\partial \varphi})^k
	\Re R(\mathfrak{z}_L e^{\mathbf{i}\varphi};\xi)\,\big\vert_{\varphi=0}=0$
	for $k=1,2,3$, and it is equal to 
	$-\frac{6 \mathfrak{z}_L^2 \xi^2}{(\mathfrak{z}_L-\xi)^4}<0$ for $k=4$.
	The case of the $w$ contour is analogous with a strictly positive fourth
	derivative in $r$ of $\Re R(\mathfrak{z}_L+\mathbf{i} r;\xi)$.
\end{proof}

Let us now deform the integration contours in the double contour integral in
\eqref{eq:K_contours_rewriting} so that they are as in
\Cref{lemma:Re_S_L_z_estimate,lemma:Re_S_L_w_estimate}
(but locally do not intersect at $\mathfrak{z}_L$). 
We can perform this deformation without
picking any residues in particular because the integrand is regular in $z$ at
all the $\xi_a$'s.
\Cref{lemma:Re_S_L_z_estimate,lemma:Re_S_L_w_estimate}
then imply that the asymptotic behavior of the integral 
for large $L$ 
is determined by the contribution
coming from the neighborhood of the double critical point $\mathfrak{z}_L$.
In \Cref{sub:kernel_single_point_approximation} we make precise estimates.

\subsection{Airy kernel}

Before we proceed, let us 
recall the Airy kernel 
\cite{tracy1993level},
\cite{tracy_widom1994level_airy}
\begin{equation}
	\label{eq:usual_Airy}
	\begin{split}
		\mathsf{A}(x;y)
		:=
		\frac{1}{(2\pi\mathbf{i})^2}
		\iint
		\frac{e^{u^3/3-v^3/3-xu+yv}du\,dv}{u-v}
		=\frac{\mathsf{Ai}(x)\mathsf{Ai}'(y)-\mathsf{Ai}'(x)\mathsf{Ai}(y)}{x-y},
	\end{split}
\end{equation}
where 
$x,y\in \mathbb{R}$ (the second expression is extended to $x=y$ by continuity).
In the contour integral expression, the $v$ integration contour goes from
$e^{-\mathbf{i}\frac{2\pi}{3}}\infty$ through $0$ to
$e^{\mathbf{i}\frac{2\pi}{3}}\infty$,
and the $u$ contour goes from
$e^{-\mathbf{i}\frac{\pi}{3}}\infty$ through $0$
to
$e^{\mathbf{i}\frac{\pi}{3}}\infty$,
and the integration contours do not intersect.

The GUE Tracy--Widom distribution function
is the following Fredholm determinant of \eqref{eq:usual_Airy}:
\begin{equation}
	\label{eq:F_2_GUE_definition}
	F_{GUE}(r)=\det\left( \mathbf{1}-\mathsf{A} \right)_{(r,+\infty)},
	\qquad 
	r\in \mathbb{R}.
\end{equation}
Its expansion is
defined analogously to \eqref{eq:K_Fredholm_determinant}
but with sums replaced with integrals over $(r,+\infty)$.

\subsection{Approximation and convergence}
\label{sub:kernel_single_point_approximation}

Our first estimate is a standard approximation of the kernel
$K(t,N,x;t,N,x')$ by the Airy kernel $\mathsf{A}$ \eqref{eq:usual_Airy}
when both $x,x'$ are close to $\mathfrak{h}_L(t,N)-N$.
Denote
\begin{equation}
	\label{eq:d_L_definition}
	\mathfrak{d}_L=
	\mathfrak{d}_L(t,N):=
	\biggl(
			\frac{1}{L}
			\sum_{a=1}^{N}
			\frac{\mathfrak{z}_L^2(t,N)\xi_a}{\left( \xi_a-\mathfrak{z}_L(t,N) \right)^3}
		\biggr)^{1/3}>0.
\end{equation}

In this subsection
we assume that $t=t(L)$ and $N=N(L)$ depend on $L$ such that for all sufficiently large $L$:
\begin{itemize}
	\item 
	$0<t<t_e(N)-c L$ for some $c>0$;
	\item 
	for some $m,M>0$ we have
	$m<\frac{t(L)}L<M$ and $m<\frac{N(L)}L<M$.
\end{itemize}
\begin{lemma}
	\label{prop:K_Airy_single_point_approximation}
	Under our assumptions on $(t(L),N(L))$, 
	as $L\to+\infty$ we have
	\begin{equation}
		\label{eq:K_Airy_single_point_approximation}
		K(t,N,x;t,N,x')=
		\bigl(-\mathfrak{z}_L(t,N)\bigr)^{(\mathsf{h}'-\mathsf{h})L^{1/3}}
		\frac{
		L^{-1/3}}{\mathfrak{d}_L(t,N)}
		\mathsf{A}\biggl(
			-\frac{\mathsf{h}}{\mathfrak{d}_L(t,N)},-\frac{\mathsf{h}'}{\mathfrak{d}_L(t,N)}
		\biggl)
		\bigl(1+O(L^{-1/3})\bigr)
		,
	\end{equation}
	where $x=\mathfrak{h}_L(t,N)-N+\mathsf{h}L^{1/3}$,
	$x'=\mathfrak{h}_L(t,N)-N+\mathsf{h}'L^{1/3}$, 
	$\mathsf{h},\mathsf{h}'\in \mathbb{R}$.
\end{lemma}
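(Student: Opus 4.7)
The plan is a standard steepest-descent analysis of the double contour integral representation of $K(t,N,x;t,N,x')$ from \Cref{prop:K_contours_rewriting}, specialized to the single-point case $(t,N)=(t',N')$. First I would observe that in this regime the single-contour term in \eqref{eq:K_contours_rewriting} reduces via a residue computation to $-\mathbf{1}_{x=x'}$ and so cancels the indicator, leaving $K$ equal to just the double integral; by \eqref{eq:integrand_via_S_L} its integrand is $(-1)^{h+h'+1}[z(z-w)]^{-1}\exp\{S_L(z;t,N,h)-S_L(w;t,N,h')\}$ with $h=x+N$, $h'=x'+N'$.

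Next, I would deform the contours through the double critical point $\mathfrak{z}_L=\mathfrak{z}_L(t,N)$. Globally, the $z$-contour becomes the circle through $\mathfrak{z}_L$ centered at the origin and the $w$-contour becomes the vertical line $\mathfrak{z}_L+\mathbf{i}\mathbb{R}$, along which \Cref{lemma:Re_S_L_z_estimate,lemma:Re_S_L_w_estimate} control $\Re S_L$. Locally near $\mathfrak{z}_L$ I would bend the $z$-contour to leave at angles $\arg(z-\mathfrak{z}_L)=\pm\pi/3$ and the $w$-contour at angles $\arg(w-\mathfrak{z}_L)=\pm 2\pi/3$; these are the Airy steep-descent directions, which are correct given $S_L'''(\mathfrak{z}_L)=-2L\mathfrak{d}_L^3/\mathfrak{z}_L^3>0$ (a formula derived from \eqref{eq:S_L_double_critical_equation_t}--\eqref{eq:S_L_double_critical_equation_h} together with the definition \eqref{eq:d_L_definition} of $\mathfrak{d}_L$).

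Then I would rescale via $z=\mathfrak{z}_L(1-L^{-1/3}u/\mathfrak{d}_L)$ and $w=\mathfrak{z}_L(1-L^{-1/3}v/\mathfrak{d}_L)$ and Taylor expand the exponent. Since $\mathfrak{z}_L$ is a double critical point of $S_L(\,\cdot\,;t,N,\mathfrak{h}_L)$, the first two Taylor terms vanish and the scaling forces the cubic pieces to be exactly $u^3/3$ and $v^3/3$. Writing $h=\mathfrak{h}_L+\mathsf{h}L^{1/3}$ and $h'=\mathfrak{h}_L+\mathsf{h}'L^{1/3}$, the $h$-shift perturbations $-\mathsf{h}L^{1/3}\log(-z)$ and $-\mathsf{h}'L^{1/3}\log(-w)$ expand as $-\mathsf{h}L^{1/3}\log(-\mathfrak{z}_L)+\mathsf{h}u/\mathfrak{d}_L+O(L^{-1/3})$ and its $w$-side analogue. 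The Jacobian $dz\,dw/[z(z-w)]$ evaluates to $-L^{-1/3}du\,dv/[\mathfrak{d}_L(u-v)]$ at leading order. Comparing with \eqref{eq:usual_Airy} produces the Airy kernel at arguments $-\mathsf{h}/\mathfrak{d}_L$ and $-\mathsf{h}'/\mathfrak{d}_L$; the constant $\log(-\mathfrak{z}_L)$ pieces exponentiate to the prefactor $(-\mathfrak{z}_L)^{(\mathsf{h}'-\mathsf{h})L^{1/3}}$; and the various signs (from $(-1)^{h+h'+1}$, the negative Jacobian, and the opposite orientation of our downward-traversed $w$-line relative to the Airy $v$-contour) cancel out.

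The final step is error control, and is also where I expect the main obstacle. On the local region $|u|,|v|\le L^{\varepsilon}$ with a suitable $\varepsilon<1/12$, the quartic Taylor remainder contributes $O(L^{-1/3+4\varepsilon})$ in the exponent (using that $S_L^{(4)}(\mathfrak{z}_L)=O(L)$, which follows from its definition as a sum of $N\asymp L$ bounded terms), yielding the asserted $1+O(L^{-1/3})$ correction. On the complement, \Cref{lemma:Re_S_L_z_estimate,lemma:Re_S_L_w_estimate} combined with the quartic nondegeneracy of \Cref{lemma:fourth_derivative_nonzero} produce exponential decay. The hard part is making all of this \emph{uniform} in $L$: since $S_L$ has $N\asymp L$ summands, one must upgrade the qualitative strict-max/min property of $\Re S_L$ at $\mathfrak{z}_L$ to a quantitative quartic gap that does not degenerate as $L\to\infty$, and one must verify that the local bending from the global circle/line to the Airy angles preserves this gap. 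The hypotheses $t<t_e(N)-cL$ and uniform boundedness of the $\xi_a$'s (which together force $\mathfrak{z}_L$ to stay uniformly bounded away from $0$ and from each $\xi_a$) are precisely what make these uniform estimates available.
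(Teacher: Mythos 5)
Your proposal follows the same route as the paper: cancel the indicator against the single contour term when $(t,N)=(t',N')$, rewrite the double integral via $S_L$, deform to the circle and vertical line of \Cref{lemma:Re_S_L_z_estimate,lemma:Re_S_L_w_estimate}, localize at the double critical point $\mathfrak{z}_L$ using \Cref{lemma:fourth_derivative_nonzero} for the tail bound, and perform the rescaling $z=\mathfrak{z}_L+L^{-1/3}\tilde z/\mathfrak{c}_L$ (your $u$ is exactly the paper's $\tilde z$, since $\mathfrak{d}_L=-\mathfrak{z}_L\mathfrak{c}_L$) to extract the Airy kernel. One small imprecision: the quartic Taylor remainder $O(L^{-1/3+4\varepsilon})$ on $|u|,|v|\le L^{\varepsilon}$ is $o(1)$ but is \emph{larger} than $O(L^{-1/3})$ for any $\varepsilon>0$, so to land on the stated $1+O(L^{-1/3})$ relative error one should further restrict to $|u|,|v|\le C$ (bounded), where the remainder really is $O(L^{-1/3})$, and absorb the annulus $C<|u|\le L^{\varepsilon}$ into the exponentially small tail estimate of both the true integral and the Airy limit --- exactly the two-stage cutoff the paper uses implicitly.
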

\begin{proof}
	When $(t,N)=(t',N')$,
	the indicator and the single contour integral in
	\eqref{eq:K_contours_rewriting}
	cancel out, and so
	we have 
	\begin{equation}
		\label{eq:K_single_proof_1}
		\begin{split}
			K(t,N,x;t,N,x')
			&=
			-
			\frac{1}{(2\pi\mathbf{i})^2}
			\oint dz \int dw\,
			\frac{e^{t(z-w)}}{z(z-w)}\frac{(-w)^{x+N}}{(-z)^{x+N}}
			\prod_{a=1}^{N}\frac{\xi_a-z}{\xi_a-w}
			\\&=
			-
			\frac{1}{(2\pi\mathbf{i})^2}
			\oint dz \int dw\,
			\frac{e^{S_L(z;t,N,h)-S_L(w;t,N,h')}}{z(z-w)},
		\end{split}
	\end{equation}
	where $h=x+N$, $h'=x'+N$.
	Let the $z$ and $w$ integration contours pass near $\mathfrak{z}_L$
	(without intersecting each other)
	and be as in \Cref{lemma:Re_S_L_z_estimate,lemma:Re_S_L_w_estimate}.
	We have
	\begin{multline*}
		S_L(z;t,N,h)-
		S_L(w;t,N,h')
		\\=
		(\mathfrak{h}_L-h)\log(-z)+(h'-\mathfrak{h}_L)\log(-w)
		+
		S_L(z;t,N,\mathfrak{h}_L)-
		S_L(w;t,N,\mathfrak{h}_L).
	\end{multline*}
	For large $L$
	the main contribution to the double 
	integral comes from a small neighborhood of the critical point
	$\mathfrak{z}_L=\mathfrak{z}_L(t,N)$.
	Indeed, fix a neighborhood of $\mathfrak{z}_L$
	of size $L^{-1/6}$. 
	By \Cref{lemma:fourth_derivative_nonzero},
	if $w$ or $z$ or both are
	outside the neighborhood of size $L^{-1/6}$ of $\mathfrak{z}_L$,
	we can estimate
	$\Re (S_L(z;t,N,\mathfrak{h}_L)-S_L(w;t,N,\mathfrak{h}_L))<-c L^{1/3}$ for some $c>0$.
	This means that the contribution coming from outside 
	the neighborhood of $\mathfrak{z}_L$
	is asymptotically negligible
	compared to $(-\mathfrak{z}_L)^{(\mathsf{h}'-\mathsf{h})L^{1/3}}$ 
	in \eqref{eq:K_Airy_single_point_approximation}.

	Inside the neighborhood of $\mathfrak{z}_L$ make a change of variables
	\begin{equation}
		\label{eq:z_w_change_var_single_point_asymptotics}
		z=\mathfrak{z}_L(t,N)+L^{-1/3}\frac{\tilde z}{\mathfrak{c}_L(t,N)},
		\qquad 
		w=\mathfrak{z}_L(t,N)+L^{-1/3}\frac{\tilde w}{\mathfrak{c}_L(t,N)},
	\end{equation}
	where
	\begin{equation}
		\label{eq:c_L_normalization_definition}
		\mathfrak{c}_L(t,N):=
		\Bigl(
			\frac{1}{2L}
			S_L'''(\mathfrak{z}_L(t,N);t,N,\mathfrak{h}_L(t,N))
		\Bigr)^{1/3}
		=
		\biggl(
			\frac{1}{L}
			\sum_{a=1}^{N}
			\frac{\xi_a}{(-\mathfrak{z}_L)\left( \xi_a-\mathfrak{z}_L \right)^3}
		\biggr)^{1/3}
		>0
	\end{equation}
	(so that $\mathfrak{d}_L=-\mathfrak{z}_L\mathfrak{c}_L$).
	Here $\tilde z,\tilde w$ are the scaled integration variables
	which are integrated over the contours in \Cref{fig:Airy_contours}.
	More precisely, $|\tilde z|,|\tilde w|$ go up to order $L^{1/6}$, 
	and the contribution to the Airy kernel $\mathsf{A}$
	coming from the parts of the contours in \eqref{eq:usual_Airy}
	outside this large
	neighborhood of zero
	is bounded from above by $e^{-c L^{1/2}}$ for some $c>0$, and so is asymptotically negligible.

	\begin{figure}[htpb]
		\centering
		\includegraphics{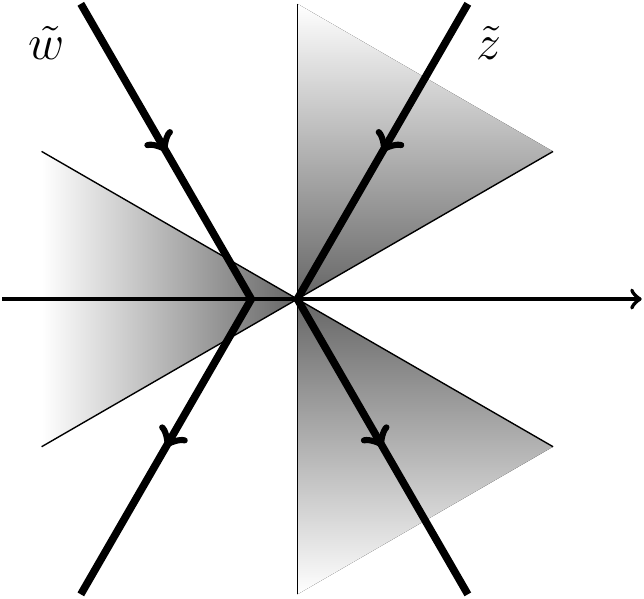}
		\caption{The integration contours for $\tilde z$ and $\tilde w$ in \eqref{eq:z_w_change_var_single_point_asymptotics}
			leading to the Airy kernel approximation.
			Shaded are the regions where $\Re(\tilde z^3)<0$.}
		\label{fig:Airy_contours}
	\end{figure}

	Using \eqref{eq:z_w_change_var_single_point_asymptotics} 
	and Taylor expanding as $L\to+\infty$ we have
	\begin{align*}
		S_L(\mathfrak{z}_L+L^{-1/3}\mathfrak{c}_L^{-1}\tilde z;t,N,\mathfrak{h}_L+\mathsf{h}L^{1/3})
		&=
		S_L(\mathfrak{z}_L;t,N,\mathfrak{h}_L)+\frac{\tilde z^3}{3}-\frac{\mathsf{h}\tilde z}{\mathfrak{z}_L\mathfrak{c}_L}
		\\
		&\hspace{70pt}-L^{1/3} \mathsf{h}\log(-\mathfrak{z}_L)+O(L^{-1/3}),
	\end{align*}
	and similarly for the other term in the exponent in \eqref{eq:K_single_proof_1}.
	Therefore, we have
	\begin{multline*}
		K(t,N,x;t',N',x')=
		\bigl(1+O(L^{-1/3})\bigr)
		(-\mathfrak{z}_L)^{(\mathsf{h}'-\mathsf{h})L^{1/3}}
		\frac{
		L^{-1/3}}{(-\mathfrak{z}_L)\mathfrak{c}_L}
		\\\times
		\frac{1}{(2\pi \mathbf{i})^2}\iint 
		\frac{e^{\tilde z^3/3-\tilde w^3/3
			-
			(\mathfrak{z}_L\mathfrak{c}_L)^{-1}\mathsf{h}\tilde z+(\mathfrak{z}_L\mathfrak{c}_L)^{-1}\mathsf{h}'\tilde w}
		d\tilde z\,d\tilde w}{\tilde z-\tilde w},
	\end{multline*}
	with $\tilde z$, $\tilde w$ contours as in \Cref{fig:Airy_contours}.
	This completes the proof.
\end{proof}
\begin{lemma}
	\label{prop:K_is_small_outside_edge}
	Under our assumptions on $(t(L),N(L))$, 
	let $h=x+N$ and $h'=x'+N$ be such that 
	$h'-\mathfrak{h}_L(t,N)\le -sL^{1/3}$ for some $s>0$.
	Then for some $C,c_1,c_2>0$ and $L$ large enough we have
	\begin{equation*}
		\bigl|K(t,N,x;t,N,x')\bigr|\le 
		C(-\mathfrak{z}_L(t,N))^{h'-h}\cdot
		\frac{ 
		e^{-c_1 L^{1/3}}
		+
		e^{c_2(h'-\mathfrak{h}_L(t,N))L^{-1/3}}
		}{\mathfrak{h}_L(t,N)-h'+1}.
	\end{equation*}
\end{lemma}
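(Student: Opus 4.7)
The plan is to adapt the steepest-descent analysis of \Cref{prop:K_Airy_single_point_approximation} to the off-edge regime $h'\le \mathfrak{h}_L(t,N)-sL^{1/3}$. Starting from the double-integral representation in the proof of \Cref{prop:K_Airy_single_point_approximation} and using the identities $S_L(z;t,N,h)=S_L(z;t,N,\mathfrak{h}_L)+(\mathfrak{h}_L-h)\log(-z)$ and its analogue for $w$, I would extract the conjugation prefactor $(-\mathfrak{z}_L)^{h'-h}$ from the integrand, reducing the task to bounding (in absolute value)
\[
\tilde K := -\frac{1}{(2\pi\mathbf{i})^{2}}\oint_z\int_w\frac{e^{S_L(z;\mathfrak{h}_L)-S_L(w;\mathfrak{h}_L)}}{z(z-w)}\Bigl(\frac{z}{\mathfrak{z}_L}\Bigr)^{\mathfrak{h}_L-h}\Bigl(\frac{w}{\mathfrak{z}_L}\Bigr)^{h'-\mathfrak{h}_L}\,dz\,dw
\]
by $C\bigl(e^{-c_1L^{1/3}}+e^{c_2(h'-\mathfrak{h}_L)L^{-1/3}}\bigr)/(\mathfrak{h}_L-h'+1)$.

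For the $z$ contour I keep the circle $|z|=|\mathfrak{z}_L|$ through $\mathfrak{z}_L$, on which $|(z/\mathfrak{z}_L)^{\mathfrak{h}_L-h}|=1$ and $\Re S_L(z;\mathfrak{h}_L)$ attains its maximum at $\mathfrak{z}_L$ by \Cref{lemma:Re_S_L_z_estimate}. The key new ingredient is to shift the $w$ contour from $\mathfrak{z}_L+\mathbf{i}\mathbb{R}$ to the vertical line $\mathfrak{z}_L(1+\alpha L^{-1/3})+\mathbf{i}\mathbb{R}$ for a fixed $\alpha>0$; the deformation is admissible because the integrand has no singularities in the left half plane. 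Parametrizing $w=\mathfrak{z}_L(1+\alpha L^{-1/3})+\mathbf{i}\mathfrak{z}_L\tilde r$, the factor $|(w/\mathfrak{z}_L)^{h'-\mathfrak{h}_L}|$ splits as the $\tilde r$-independent piece $(1+\alpha L^{-1/3})^{h'-\mathfrak{h}_L}=\exp\bigl(\alpha(h'-\mathfrak{h}_L)L^{-1/3}(1+o(1))\bigr)$ — producing the second exponential of the bound — times $(1+\tilde r^{2}/(1+\alpha L^{-1/3})^{2})^{(h'-\mathfrak{h}_L)/2}$. Integrating this second piece against $|z-w|^{-1}\lesssim(1+|\tilde r|)^{-1}$ on the outer portion of the contour yields the denominator $(\mathfrak{h}_L-h'+1)^{-1}$ through the tail estimate $\int_1^\infty(1+\tilde r^{2})^{-a}\tilde r^{-1}\,d\tilde r=O(a^{-1})$ as $a\to\infty$.

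The $L^{-1/3}$ shift alters $|e^{-S_L(w;\mathfrak{h}_L)}|$ at the shifted base point only by a bounded multiplicative constant, since $S_L'(\mathfrak{z}_L;\mathfrak{h}_L)=S_L''(\mathfrak{z}_L;\mathfrak{h}_L)=0$ and $S_L'''(\mathfrak{z}_L;\mathfrak{h}_L)=O(L)$, so the Taylor remainder at displacement $L^{-1/3}$ is $O(1)$; this constant is absorbed into $C$. The first exponential term $e^{-c_1L^{1/3}}$ in the bound arises from the portion of the contours outside a small Airy-scale window — for instance $|\tilde r|\ge L^{1/9}$ on the $w$ line and $|\varphi|\ge L^{-2/9}$ on the $z$ circle — where \Cref{lemma:Re_S_L_z_estimate,lemma:Re_S_L_w_estimate,lemma:fourth_derivative_nonzero} supply at least quartic decay in the scaled variables, producing stretched-exponential factors of the form $e^{-cL^{1/3}}$. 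The main obstacle is coordinating the two regimes so that both exponentials appear additively alongside the clean $(\mathfrak{h}_L-h'+1)^{-1}$ denominator; the delicate check is that shifting the $w$ contour by $\alpha L^{-1/3}$ preserves the minimization property of \Cref{lemma:Re_S_L_w_estimate} (so $\Re S_L(\cdot;\mathfrak{h}_L)$ on the new vertical line is still minimized at the real base point for all large $L$), which follows from continuity of the second-order condition at $\mathfrak{z}_L$ together with the uniform lower bound $S_L'''(\mathfrak{z}_L;\mathfrak{h}_L)/L\ge c>0$ coming from the positivity and boundedness of $\boldsymbol\xi(\cdot)$.
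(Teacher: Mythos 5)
Your proposal follows essentially the same steepest-descent route as the paper: split the exponent as $(h'-h)\log|\mathfrak{z}_L| + (h'-\mathfrak{h}_L)\log|w/\mathfrak{z}_L| + \Re\bigl[S_L(z;\mathfrak{h}_L) - S_L(w;\mathfrak{h}_L)\bigr]$, keep the $z$ contour on the circle through $\mathfrak{z}_L$, move the $w$ contour leftward by an $L^{-1/3}$ amount, and use the decay of $\Re S_L$ away from $\mathfrak{z}_L$ for the outer portion. The paper makes the leftward shift only locally, on the Airy-scaled piece of the contour (so that $\Re\tilde w \le -1$ in the scaled variable), while you shift the entire vertical line; both are morally identical and your verification that the $O(L)\cdot(L^{-1/3})^3 = O(1)$ Taylor remainder is harmless is correct.

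There is, however, a genuine gap in how you produce the factor $(\mathfrak{h}_L - h' + 1)^{-1}$. You attribute it to the tail integral $\int_1^\infty(1+\tilde r^2)^{-a}\tilde r^{-1}\,d\tilde r = O(a^{-1})$ on the outer part of the $w$ contour. But on the outer part the estimate $\Re\bigl[S_L(z;\mathfrak{h}_L) - S_L(w;\mathfrak{h}_L)\bigr] \le -cL^{1/3}$ already dominates; taking $c_1 < c$ absorbs the polynomial factor $\mathfrak{h}_L - h' + 1 = O(L)$ for free. The denominator must instead come from the \emph{inner} (Airy-window) contribution, which your write-up never bounds. After you peel off the base-point factor $(1+\alpha L^{-1/3})^{h'-\mathfrak{h}_L} \approx e^{\alpha(h'-\mathfrak{h}_L)L^{-1/3}}$, the residual inner integral is of order $L^{-1/3}$ (the usual Airy-scale estimate), and $L^{-1/3}$ is \emph{not} $\lesssim(\mathfrak{h}_L - h' + 1)^{-1}$ when $\mathfrak{h}_L - h'$ is macroscopic. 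You would need to take $c_2$ strictly less than $\alpha$ and spend the slack $e^{(c_2 - \alpha)(\mathfrak{h}_L - h')L^{-1/3}}$ to absorb $(\mathfrak{h}_L - h' + 1)L^{-1/3}$, e.g.\ via $(v + L^{-1/3})e^{-(\alpha - c_2)v} \le C$ for $v = (\mathfrak{h}_L - h')L^{-1/3} \ge s$. That reconciliation is missing. The paper avoids it entirely by integrating the inner $w$ part against $e^{A\Re\tilde w}$ with $A = (\mathfrak{h}_L - h')L^{-1/3}/\mathfrak{d}_L > 0$ and $\Re\tilde w \le -1$, so that $\int_1^\infty e^{-Au}\,du = e^{-A}/A$ delivers the exponential and the denominator $1/A \sim L^{1/3}/(\mathfrak{h}_L - h')$ simultaneously, and the Jacobian $L^{-1/3}$ converts the latter to $(\mathfrak{h}_L - h')^{-1}$ directly.
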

\begin{proof}
	First, observe that the assumptions imply that the double critical point
	$|\mathfrak{z}_L(t,N)|$ is 
	uniformly bounded away from zero and infinity.

	Write the kernel 
	as \eqref{eq:K_single_proof_1}
	with integration
	contours described in 
	\Cref{lemma:Re_S_L_z_estimate,lemma:Re_S_L_w_estimate}
	and locally around $\mathfrak{z}_L$ in the proof of \Cref{prop:K_Airy_single_point_approximation}.
	In the exponent we have
	\begin{multline}
		\label{eq:K_outside_edge_proof1}
		\Re\bigl[S_L(z;t,N,h)-S_L(w;t,N,h')\bigr]
		=
		(h'-h)\log|\mathfrak{z}_L|
		+
		(h'-\mathfrak{h}_L)\log|w/\mathfrak{z}_L|
		\\
		+
		\Re\bigl[S_L(z;t,N,\mathfrak{h}_L)
		-
		S_L(w;t,N,\mathfrak{h}_L)\bigr].
	\end{multline}
	If either $z$ or $w$ or both are outside of 
	a $L^{-1/6}$-neighborhood of $\mathfrak{z}_L$, 
	we estimate
	\begin{equation}
		\eqref{eq:K_outside_edge_proof1}
		\le
		(h'-h)\log|\mathfrak{z}_L|
		+
		(h'-\mathfrak{h}_L)\log|w/\mathfrak{z}_L|
		-c L^{1/3}
		\label{eq:K_outside_edge_proof2}
	\end{equation}
	as in the proof of \Cref{prop:K_Airy_single_point_approximation}.
	The part in the exponent containing $w$ is integrable over the vertical $w$ contour,
	which leads to 
	the first term
	in the estimate for $|K|$.

	Now, if both $z,w$ are close to $\mathfrak{z}_L$, make the change of variables
	\eqref{eq:z_w_change_var_single_point_asymptotics} and write
	\begin{equation}
		\eqref{eq:K_outside_edge_proof1}
		\le
		(h'-h)\log|\mathfrak{z}_L|
		+
		(h'-\mathfrak{h}_L)\Bigl[ \frac{L^{-1/3} \Re\tilde w}{\mathfrak{z}_L\mathfrak{c}_L}+O(L^{-2/3}) \Bigr]
		+
		\frac{1}{3}\Re
		\left[ \tilde z^3-\tilde w^3 \right].
		\label{eq:K_outside_edge_proof3}
	\end{equation}
	The part containing $\tilde z,\tilde w$ is integrable over the 
	scaled contours
	in \Cref{fig:Airy_contours}.
	Since the coefficient by $\Re \tilde w$ is positive and $\Re \tilde w\le-1$ on our contour, 
	we estimate this integral using the exponential integral
	$\int_1^{\infty}e^{Au}du=e^{-A}/A$, where $u$ corresponds to $\Re \tilde w$, and
	$A$ is the coefficient by $\Re \tilde w$.
	This produces the 
	second term in the estimate for $|K|$.
	This completes the proof.
\end{proof}

\begin{proposition}
	\label{prop:Fredholm_approximation_Airy}
	Under our assumptions on $(t(L),N(L))$,
	for fixed $\mathsf{y}\in \mathbb{R}$
	and large enough $L$ we have
	\begin{equation}
		\label{eq:prob_Airy_approximation_single_point}
		\mathbb{P}(h(t,N)>\mathfrak{h}_L(t,N)+\mathsf{y}L^{1/3})=
		\bigl(1+O(L^{-1/3})\bigr)
		F_{GUE}\bigl(-\mathsf{y}/\mathfrak{d}_L(t,N)\bigr),
	\end{equation}
	where $\mathfrak{d}_L(t,N)$ is given by 
	\eqref{eq:d_L_definition}.
\end{proposition}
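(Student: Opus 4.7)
The plan is to start from the Fredholm-determinantal formula of \Cref{cor:pushTASEP_Fredholm_single_point_formula_intro}, which gives
\begin{equation*}
	\mathbb{P}\bigl(h(t,N)>\mathfrak{h}_L(t,N)+\mathsf{y}L^{1/3}\bigr)
	=
	\det\bigl(\mathbf{1}-K(t,N,\cdot;t,N,\cdot)\bigr)_{\{x\le \lfloor\mathfrak{h}_L(t,N)+\mathsf{y}L^{1/3}\rfloor-N\}},
\end{equation*}
and to show termwise that the right-hand side converges to the Fredholm determinant $F_{GUE}(-\mathsf{y}/\mathfrak{d}_L)=\det(\mathbf{1}-\mathsf{A})_{(-\mathsf{y}/\mathfrak{d}_L,+\infty)}$. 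In each term of the series expansion \eqref{eq:Fredhom_theorem_intro}, I would perform the change of variables $x_i=\mathfrak{h}_L(t,N)-N+\mathsf{h}_iL^{1/3}$ so that the summation lattice becomes $L^{-1/3}\mathbb{Z}\cap(-\infty,\mathsf{y}]$ with spacing $L^{-1/3}$.

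Next, I would apply \Cref{prop:K_Airy_single_point_approximation} to each entry of the $n\times n$ determinant. The conjugation factor $(-\mathfrak{z}_L)^{(\mathsf{h}'-\mathsf{h})L^{1/3}}$ is gauge-like: in the product over $(i,j)$ appearing in the determinant, the factors telescope and cancel, so that
\begin{equation*}
	\det_{i,j=1}^n\bigl[K(t,N,x_i;t,N,x_j)\bigr]
	=
	\det_{i,j=1}^n\biggl[\frac{L^{-1/3}}{\mathfrak{d}_L}\mathsf{A}\Bigl(-\tfrac{\mathsf{h}_i}{\mathfrak{d}_L},-\tfrac{\mathsf{h}_j}{\mathfrak{d}_L}\Bigr)\biggr]\bigl(1+O(L^{-1/3})\bigr).
\end{equation*}
Pulling out the factors of $L^{-1/3}/\mathfrak{d}_L$, the $n$-fold sum becomes a Riemann sum with mesh $L^{-1/3}$ for the integral
\begin{equation*}
	\int_{-\infty}^{\mathsf{y}}\!\!\!\cdots\!\!\int_{-\infty}^{\mathsf{y}}
	\det_{i,j=1}^n\Bigl[\mathfrak{d}_L^{-1}\mathsf{A}(-\mathsf{h}_i/\mathfrak{d}_L,-\mathsf{h}_j/\mathfrak{d}_L)\Bigr]\,d\mathsf{h}_1\cdots d\mathsf{h}_n,
\end{equation*}
which after the substitution $u_i=-\mathsf{h}_i/\mathfrak{d}_L$ is exactly the $n$-th term in the Fredholm expansion of $F_{GUE}(-\mathsf{y}/\mathfrak{d}_L)$.

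The main obstacle is the interchange of summation (or integration) and limit. On the one hand, within each term the inner sums in $\mathsf{h}_i$ extend to $-\infty$ and must be controlled uniformly in $L$; on the other hand, the outer sum over $n$ must be shown to be dominated by a convergent series. Both issues are handled by \Cref{prop:K_is_small_outside_edge}: for $\mathsf{h}_i\le \mathsf{y}$ it yields a uniform bound of the form $|K(t,N,x_i;t,N,x_j)|\le C(-\mathfrak{z}_L)^{\mathsf{h}_j-\mathsf{h}_i}\,g_L(\mathsf{h}_i,\mathsf{h}_j)$ with $g_L$ decaying in $|\mathsf{h}_i|+|\mathsf{h}_j|$. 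Since $(-\mathfrak{z}_L)>1$ (as $\mathfrak{z}_L<-1$ follows from a direct inspection of the critical point equation in the relevant regime; otherwise a standard sign correction applies), the gauge factors again cancel out of the Hadamard bound, and the decay in $g_L$ combined with Hadamard's inequality $|\det[M_{ij}]|\le n^{n/2}\max|M_{ij}|^n$ produces a summable majorant in $n$ and in the $\mathsf{h}_i$'s. Dominated convergence then allows the termwise limit to pass through both sums, yielding \eqref{eq:prob_Airy_approximation_single_point}; the $O(L^{-1/3})$ rate is inherited from the pointwise kernel approximation in \Cref{prop:K_Airy_single_point_approximation}.
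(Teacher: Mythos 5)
Your approach is essentially the same as the paper's: start from the Fredholm expansion of \Cref{cor:pushTASEP_Fredholm_single_point_formula_intro}, apply \Cref{prop:K_Airy_single_point_approximation} on a compact window of scaled coordinates to get a Riemann sum converging to the Airy-kernel Fredholm expansion, and control the far tail via \Cref{prop:K_is_small_outside_edge} together with Hadamard's inequality. The paper makes the cut explicit, fixing $s>0$ and first treating the terms with all $x_i>\mathfrak{h}_L-N+(\mathsf{y}-s)L^{1/3}$, then estimating the remainder by expanding a column with small $x_j$ and applying H\"older/Hadamard, finally sending $s\to\infty$ using Airy-kernel decay; you phrase the same bookkeeping as dominated convergence. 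Both are fine, though the explicit $s$ makes the claimed $O(L^{-1/3})$ rate easier to track than a bare dominated-convergence argument (convergence alone does not give a rate, so you would still need a quantitative tail bound in $s$, which is implicit in your majorant).

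One small but genuine slip: you invoke ``$(-\mathfrak{z}_L)>1$'' as if it were needed. It is neither needed nor always true --- in the homogeneous case $\mathfrak{z}=1-\sqrt{\eta/\tau}$ lies in $(-1,0)$ whenever $\tau<\eta<4\tau$. The gauge factor $(-\mathfrak{z}_L)^{x'-x}$ cancels \emph{exactly} in every $n\times n$ determinant of the Fredholm expansion (this is the standard conjugation $K\mapsto \frac{f(x)}{f(x')}K$, noted in the paper's footnote), so you should conjugate the kernel first and only then apply the bound from \Cref{prop:K_is_small_outside_edge} and Hadamard; no sign condition on $\mathfrak{z}_L$ is required. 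Also note that the bound in \Cref{prop:K_is_small_outside_edge} holds only for $h'-\mathfrak{h}_L\le -sL^{1/3}$ with a fixed $s>0$, so you cannot avoid splitting the range of $\mathsf{h}_j$ into a bounded window (where \Cref{prop:K_Airy_single_point_approximation} applies) and a tail (where \Cref{prop:K_is_small_outside_edge} applies), just as the paper does.
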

\begin{proof}
	Set $y:=\lfloor\mathfrak{h}_L(t,N)+\mathsf{y}L^{1/3}\rfloor-1$.
	\Cref{cor:pushTASEP_Fredholm_single_point_formula_intro} states that
	the probability in the left-hand side of 
	\eqref{eq:prob_Airy_approximation_single_point}
	is given by a Fredholm determinant with expansion
	\begin{equation}
		\label{eq:Fredholm_det_expansion_in_the_proof}
		1+\sum_{n=1}^{\infty}\frac{(-1)^{n}}{n!}
		\sum_{x_1=-\infty}^{y-N} \ldots \sum_{x_n=-\infty}^{y-N} 
		\;
		\mathop{\mathrm{det}}_{i,j=1}^{n}
		\left[
			K(t,N,x_i;t,N,x_j) 
		\right].
	\end{equation}
	Fix $s>0$ (to be taken large later)
	and separate the terms in the above Fredholm expansion
	where all $x_i>y-N-sL^{1/3}$, plus the remainder. 
	In the former terms we use \Cref{prop:K_Airy_single_point_approximation}, 
	and the latter terms are smaller due to 
	\Cref{prop:K_is_small_outside_edge}.

	When all $x_i>y-N-sL^{1/3}\sim \mathfrak{h}_L-N+\mathsf{y}L^{1/3}-s L^{1/3}$, 
	let us reparametrize the summation variables as
	$x_i=\mathfrak{h}_L-N+\mathsf{u}_i L^{1/3}$, with $\mathsf{u}_i\in \mathbb{R}$
	going from $\mathsf{y}-s$ to $\mathsf{y}$ (in increments of $L^{-1/3}$).
	From \Cref{prop:K_Airy_single_point_approximation} we 
	have\footnote{In general, the kernel $\frac{f(x)}{f(y)}\mathrm{K}(x,y)$
	(with $f$ nowhere vanishing) gives the same determinants as $\mathrm{K}(x,y)$.
	Therefore, 
	the factor $(-\mathfrak{z}_L)^{(\mathsf{h}'-\mathsf{h})L^{1/3}}$
	in \Cref{prop:K_Airy_single_point_approximation},
	as well as the same factor in \Cref{prop:K_is_small_outside_edge},
	do not affect the Fredholm expansion
	and can be ignored.}
	\begin{equation*}
		\mathop{\mathrm{det}}_{i,j=1}^{n}
		\left[
			K(t,N,x_i;t,N,x_j) 
		\right]
		=
		\bigl( 1+O(L^{-1/3}) \bigr)
		\left( \frac{L^{-1/3}}{\mathfrak{d}_L} \right)^{n}
		\mathop{\mathrm{det}}_{i,j=1}^{n}
		\Bigl[
			\mathsf{A}
			\Bigl( 
				-\frac{\mathsf{u}_i}{\mathfrak{d}_L}, 
				-\frac{\mathsf{u}_j}{\mathfrak{d}_L}
			\Bigr)
		\Bigr],
	\end{equation*}
	and each $n$-fold sum
	over $x_{i}>\mathfrak{h}_L-N+(\mathsf{y}-s)L^{1/3}$
	can be approximated (within $O(L^{-1/3})$ error) 
	by the $n$-fold integral of the Airy kernel $\mathsf{A}$
	from $-\mathsf{y}/\mathfrak{d}_L$ to 
	$(s-\mathsf{y})/\mathfrak{d}_L$.
	Taking $s$ sufficiently large
	and using the decay of the Airy kernel
	(e.g., see \cite{tracy_widom1994level_airy})
	leads to the GUE Tracy--Widom distribution function at $-\mathsf{y}/\mathfrak{d}_L$.

	Consider now the remaining terms. 
	Using \Cref{prop:K_is_small_outside_edge} we have
	\begin{multline*}
		(-\mathfrak{z}_L)^{x+N-x'-N'}\sum_{x'<\mathfrak{h}_L-N+(\mathsf{y}-s)L^{1/3}}
		|K(t,N,x;t',N',x')|
		\\
		\le
		C_1e^{-c_1 L^{1/3}}
		\log \bigl(L(s-\mathsf{y})\bigr)+
		(s-\mathsf{y})^{-1}C_2 e^{-c_2(s-\mathsf{y})}
		\bigl(1+O(L^{-1/3})\bigr)
	\end{multline*}
	for some $C_i,c_i>0$.
	The first term decays rapidly for large $L$, 
	and the second term can be made small for fixed $\mathsf{y}$
	by choosing a sufficiently large $s$.

	Take the $n$-th term in 
	\eqref{eq:Fredholm_det_expansion_in_the_proof}
	where 
	some of the $x_i$'s are summed
	from $-\infty$ to $y-N-sL^{1/3}$,
	and
	expand the $n\times n$ determinant along a column $x_j$
	corresponding to 
	$x_j<\mathfrak{h}_L-N+(\mathsf{y}-s)L^{1/3}$.
	The resulting $n-1$ determinants are estimated via 
	H\"older and
	Hadamard's inequalities. Thus,
	the remaining terms in the Fredholm
	expansion 
	are negligible
	and can be included in the error in the right-hand side of 
	\eqref{eq:prob_Airy_approximation_single_point}.
	This completes the proof.
\end{proof}

The final step of the proof of \Cref{thm:single_point_fluctuations}
(which would also imply \Cref{thm:limit_shape_result})
is to show that the approximation of the probability in
\Cref{prop:Fredholm_approximation_Airy}
implies the convergence of the probabilities
$\mathbb{P}(h(t,N)>L\mathfrak{h}(\tau,\eta)+\mathsf{y}L^{1/3})$
to the GUE Tracy--Widom distribution function. 
This convergence would clearly follow if 
\begin{equation}
	\label{eq:h_desired_approximation}
	\mathfrak{h}_L(\tau L,\lfloor \eta L \rfloor )=L\mathfrak{h}(\tau,\eta)+o(L^{1/3}),
	\qquad 
	\mathfrak{d}_L(\tau L,\lfloor \eta L \rfloor )=
	\mathfrak{d}(\tau,\eta)+o(1).
\end{equation}
Observe that all sums in the definitions of $\mathfrak{z}_L$, $\mathfrak{h}_L$, $\mathfrak{d}_L$
in \Cref{sub:crit_pts_estimates}
are Riemann sums of the integrals from $0$ to $\eta$
from \Cref{sub:limit_shape}.
The mesh of these integrals is of order $L^{-1}$, 
and due to the piecewise $C^1$ assumption on $\boldsymbol\xi(\cdot)$,
integrals are approximated by Riemann sums within $O(L^{-1})$.
This implies \eqref{eq:h_desired_approximation},
which is the last step in the proof of \Cref{thm:single_point_fluctuations}.

\appendix

\section{Checking the hydrodynamic equation}
\label{sec:app_check_hydrodynamics}

Here we check that the limiting density 
$\uprho(\tau,\eta)$
defined in \Cref{sub:limit_shape} 
indeed satisfies the hydrodynamic equation
\eqref{eq:hydrodynamic_PDE}. We assume that 
$\eta\ge \tau$ (since $\uprho\equiv0$ clearly satisfies the equation), 
and use formulas
\eqref{eq:critical_solution_t_equation} and
\eqref{eq:h_limit_shape_definition}.
First, note that
the
initial condition
$\uprho(0,\eta)=\mathbf{1}_{\eta\ge0}$
corresponds to the solution $\mathfrak{z}(0,\eta)=-\infty$
of \eqref{eq:critical_solution_t_equation}.

Observe that
\begin{equation*}
	\uprho(\tau,\eta)=
	\frac{\partial}{\partial\eta}\,\mathfrak{h}(\tau,\eta)=
	\frac{\mathfrak{z}^2(\tau,\eta)}
	{
		\left( \mathfrak{z}(\tau,\eta)- \boldsymbol\xi(\eta) \right)^2
	}
	-
	\mathfrak{z}_\eta(\tau,\eta)
	\int_0^\eta
	\frac{2 \mathfrak{z}(\tau,\eta)\boldsymbol\xi(y)}
	{
		\left( \mathfrak{z}(\tau,\eta)- \boldsymbol\xi(y) \right)^3
	}\,dy.
\end{equation*}
The derivative $\mathfrak{z}_\eta$ can be found by differentiating
\eqref{eq:critical_solution_t_equation} in $\eta$:
\begin{equation*}
	0=\frac{\boldsymbol\xi(\eta)}{\left( \mathfrak{z}(\tau,\eta)- \boldsymbol\xi(\eta) \right)^2}
	-
	\mathfrak{z}_\eta(\tau,\eta)
	\int_0^\eta
	\frac{2 \boldsymbol\xi(y)}
	{
		\left( \mathfrak{z}(\tau,\eta)- \boldsymbol\xi(y) \right)^3
	}
	\, dy,
\end{equation*}
which immediately leads to 
$\uprho={\mathfrak{z}}/({\mathfrak{z}- \boldsymbol\xi(\eta)})$,
which is formula 
\eqref{eq:rho_via_z_critical_point} in the Introduction. 
This implies 
\begin{equation*}
	\frac{\boldsymbol\xi(\eta)\uprho(\tau,\eta)}{1-\uprho(\tau,\eta)}
	=
	-
	\mathfrak{z}(\tau,\eta),
	\qquad 
	\frac{\partial}{\partial\eta}
	\left( 
	\frac{\boldsymbol\xi(\eta)\uprho(\tau,\eta)}{1-\uprho(\tau,\eta)}
\right)=-\mathfrak{z}_\eta(\tau,\eta).
\end{equation*}
The remaining term $\uprho_\tau$ in \eqref{eq:hydrodynamic_PDE}
can be expressed through $\mathfrak{z}_\tau$ by
differentiating \eqref{eq:critical_solution_t_equation}
in $\tau$. We have
\begin{equation*}
	1=-
	\mathfrak{z}_\tau(\tau,\eta)
	\int_0^\eta
	\frac{2 \boldsymbol\xi(y)}
	{
		\left( \mathfrak{z}(\tau,\eta)- \boldsymbol\xi(y) \right)^3
	}\, dy,
\end{equation*}
and 
\begin{equation*}
	\uprho_\tau(\tau,\eta)
	=
	\frac{\partial}{\partial\tau}
	\left( \frac{\mathfrak{z}(\tau,\eta)}{\mathfrak{z}(\tau,\eta)- \boldsymbol\xi(\eta)} \right)
	=
	-\mathfrak{z}_\tau(\tau,\eta)
	\,
	\frac{\boldsymbol\xi(\eta)}{\left(\mathfrak{z}(\tau,\eta)- \boldsymbol\xi(\eta)\right)^2}.
\end{equation*}
Combining the above formulas yields the 
hydrodynamic equation \eqref{eq:hydrodynamic_PDE} for 
the limiting density.

\begin{remark}[Homogeneous case]
	\label{rmk:homogeneous_case_density_computation}
	For $\boldsymbol\xi(\eta)\equiv 1$
	equation \eqref{eq:hydrodynamic_PDE} looks as 
	$\tau=\eta/(z-1)^2$, and its unique negative root
	is $\mathfrak{z}(\tau,\eta)=1-\sqrt{\eta/\tau}$, $\eta\ge\tau$ 
	(note that in the homogeneous case $\tau_e(\eta)=\eta$).
	This leads to $\uprho(\tau,\eta)
	={\mathfrak{z}(\tau,\eta)}/({\mathfrak{z}(\tau,\eta)-1)}
	=1-\sqrt{\tau/\eta}$,
	and so the height function is
	$\mathfrak{h}(\tau,\eta)=
	\left( \sqrt\eta-\sqrt\tau \right)^2$,
	as mentioned in \Cref{rmk:homogeneous_case_formulas} in the Introduction.
\end{remark}

\bibliographystyle{amsalpha}
\bibliography{bib}

\providecommand{\bysame}{\leavevmode\hbox to3em{\hrulefill}\thinspace}
\providecommand{\MR}{\relax\ifhmode\unskip\space\fi MR }
\providecommand{\MRhref}[2]{%
  \href{http://www.ams.org/mathscinet-getitem?mr=#1}{#2}
}
\providecommand{\href}[2]{#2}
\begin{thebibliography}{HKPV06}

\bibitem[AG05]{andjel2005long}
E.~Andjel and H.~Guiol, \emph{Long-range exclusion processes, generator and
  invariant measures}, Ann. Probab. \textbf{33} (2005), no.~6, 2314--2354,
  arXiv:math/0411655 [math.PR].

\bibitem[AK84]{Andjel1984}
E.~Andjel and C.~Kipnis, \emph{Derivation of the hydrodynamical equation for
  the zero-range interaction process}, Ann. Probab. \textbf{12} (1984), no.~2,
  325--334.

\bibitem[Ass20]{theodoros2019_determ}
T.~Assiotis, \emph{Determinantal structures in space inhomogeneous dynamics on
  interlacing arrays}, Ann. Inst. H. Poincar\'e \textbf{21} (2020), 909--940,
  arXiv:1910.09500 [math.PR].

\bibitem[Bai06]{BaikBBPTASEP}
J.~Baik, \emph{Painlev\'e formulas of the limiting distributions for nonnull
  complex sample covariance matrices}, {Duke Math J.} \textbf{133} (2006),
  no.~2, 205--235, arXiv:math/0504606 [math.PR].

\bibitem[Bar01]{Baryshnikov_GUE2001}
Yu. Baryshnikov, \emph{{GUEs and queues}}, Probab. Theory Relat. Fields
  \textbf{119} (2001), 256--274.

\bibitem[BBW18]{BorodinBufetovWheeler2016}
A.~Borodin, A.~Bufetov, and M.~Wheeler, \emph{Between the stochastic six vertex
  model and hall-littlewood processes}, Duke Math. J. \textbf{167} (2018),
  no.~13, 2457--2529, arXiv:1611.09486 [math.PR].

\bibitem[BC14]{BorodinCorwin2011Macdonald}
A.~Borodin and I.~Corwin, \emph{Macdonald processes}, Probab. Theory Relat.
  Fields \textbf{158} (2014), 225--400, arXiv:1111.4408 [math.PR].

\bibitem[BCG16]{BCG6V}
A.~Borodin, I.~Corwin, and V.~Gorin, \emph{Stochastic six-vertex model}, Duke
  J. Math. \textbf{165} (2016), no.~3, 563--624, arXiv:1407.6729 [math.PR].

\bibitem[BDJ99]{baik1999distribution}
J.~Baik, P.~Deift, and K.~Johansson, \emph{{On the distribution of the length
  of the longest increasing subsequence of random permutations}}, Jour. AMS
  \textbf{12} (1999), no.~4, 1119--1178, arXiv:math/9810105 [math.CO].

\bibitem[BF08]{BorFerr08push}
A.~Borodin and P.~Ferrari, \emph{{Large time asymptotics of growth models on
  space-like paths I: PushASEP}}, Electron. J. Probab. \textbf{13} (2008),
  1380--1418, arXiv:0707.2813 [math-ph].

\bibitem[BF14]{BorFerr2008DF}
\bysame, \emph{Anisotropic growth of random surfaces in 2+1 dimensions},
  Commun. Math. Phys. \textbf{325} (2014), 603--684, arXiv:0804.3035 [math-ph].

\bibitem[BFS08]{Borodin2008TASEPII}
A.~Borodin, P.~Ferrari, and T.~Sasamoto, \emph{{Large Time Asymptotics of
  Growth Models on Space-like Paths II: PNG and Parallel TASEP}}, Commun. Math.
  Phys. \textbf{283} (2008), no.~2, 417--449, arXiv:0707.4207 [math-ph].

\bibitem[BFS09]{BorodinEtAl2009TwoSpeed}
\bysame, \emph{Two speed tasep}, J. Stat. Phys \textbf{137} (2009), no.~5,
  936--977, arXiv:0904.4655 [math-ph].

\bibitem[BG13]{BG2011non}
A.~Borodin and V.~Gorin, \emph{{Markov processes of infinitely many
  nonintersecting random walks}}, Probab. Theory Relat. Fields \textbf{155}
  (2013), no.~3-4, 935--997, arXiv:1106.1299 [math.PR].

\bibitem[BM18]{BufetovMatveev2017}
A.~Bufetov and K.~Matveev, \emph{Hall-littlewood rsk field}, Selecta Math.
  \textbf{24} (2018), no.~5, 4839--4884, arXiv:1705.07169 [math.PR].

\bibitem[BMP19]{BufetovMucciconiPetrov2018}
A.~Bufetov, M.~Mucciconi, and L.~Petrov, \emph{Yang-baxter random fields and
  stochastic vertex models}, arXiv preprint (2019), arXiv:1905.06815 [math.PR].
  To appear in Adv. Math.

\bibitem[BOO00]{Borodin2000b}
A.~Borodin, A.~Okounkov, and G.~Olshanski, \emph{{Asymptotics of Plancherel
  measures for symmetric groups}}, Jour. AMS \textbf{13} (2000), no.~3,
  481--515, arXiv:math/9905032 [math.CO].

\bibitem[Bor10]{Bornemann_Fredholm2010}
Folkmar Bornemann, \emph{{On the numerical evaluation of Fredholm
  determinants}}, Math. Comp. \textbf{79} (2010), no.~270, 871--915,
  arXiv:0804.2543 [math.NA].

\bibitem[Bor11]{Borodin2009}
A.~Borodin, \emph{Determinantal point processes}, Oxford Handbook of Random
  Matrix Theory (G.~Akemann, J.~Baik, and P.~Di~Francesco, eds.), Oxford
  University Press, 2011, arXiv:0911.1153 [math.PR].

\bibitem[Bor18]{borodin2016stochastic_MM}
\bysame, \emph{Stochastic higher spin six vertex model and macdonald measures},
  Jour. Math. Phys. \textbf{59} (2018), no.~2, 023301, arXiv:1608.01553
  [math-ph].

\bibitem[BP16]{BorodinPetrov2013NN}
A.~Borodin and L.~Petrov, \emph{{Nearest neighbor Markov dynamics on Macdonald
  processes}}, Adv. Math. \textbf{300} (2016), 71--155, arXiv:1305.5501
  [math.PR].

\bibitem[BP18]{BorodinPetrov2016Exp}
\bysame, \emph{Inhomogeneous exponential jump model}, Probab. Theory Relat.
  Fields \textbf{172} (2018), 323--385, arXiv:1703.03857 [math.PR].

\bibitem[BP19]{BufetovPetrovYB2017}
A.~Bufetov and L.~Petrov, \emph{{Yang-Baxter field for spin Hall-Littlewood
  symmetric functions}}, Forum Math. Sigma \textbf{7} (2019), e39,
  arXiv:1712.04584 [math.PR].

\bibitem[BSS14]{Basuetal2014_slowbond}
R.~Basu, V.~Sidoravicius, and A.~Sly, \emph{Last passage percolation with a
  defect line and the solution of the slow bond problem}, arXiv preprint
  (2014), arXiv:1408.3464 [math.PR].

\bibitem[BSS17]{basu2017invariant}
R.~Basu, S.~Sarkar, and A.~Sly, \emph{Invariant measures for tasep with a slow
  bond}, arXiv preprint (2017), arXiv:1704.07799.

\bibitem[Cal15]{calder2015directed}
J.~Calder, \emph{Directed last passage percolation with discontinuous weights},
  Jour. Stat. Phys. \textbf{158} (2015), no.~4, 903--949.

\bibitem[Cor12]{CorwinKPZ}
I.~Corwin, \emph{{The Kardar-Parisi-Zhang equation and universality class}},
  Random Matrices Theory Appl. \textbf{1} (2012), 1130001, arXiv:1106.1596
  [math.PR].

\bibitem[DF90]{DiaconisFill1990}
P.~Diaconis and J.A. Fill, \emph{Strong stationary times via a new form of
  duality}, Ann. Probab. \textbf{18} (1990), 1483--1522.

\bibitem[DLSS91]{derrida1991dynamics}
B.~Derrida, J.~Lebowitz, E.~Speer, and H.~Spohn, \emph{{Dynamics of an anchored
  Toom interface}}, J. Phys. A \textbf{24} (1991), no.~20, 4805.

\bibitem[Dui13]{Duits2011GFF}
M.~Duits, \emph{{The Gaussian free field in an interlacing particle system with
  two jump rates}}, Comm. Pure Appl. Math. \textbf{66} (2013), no.~4, 600--643,
  arXiv:1105.4656 [math-ph].

\bibitem[DW08]{diekerWarren2008determinantal}
A.B. Dieker and J.~Warren, \emph{{Determinantal transition kernels for some
  interacting particles on the line}}, Annales de l'Institut Henri Poincar{\'e}
  \textbf{44} (2008), no.~6, 1162--1172, arXiv:0707.1843 [math.PR].

\bibitem[Fer08]{Ferrari_Airy_Survey}
P.~Ferrari, \emph{{The universal Airy$_1$ and Airy$_2$ processes in the Totally
  Asymmetric Simple Exclusion Process}}, {Integrable Systems and Random
  Matrices: In Honor of Percy Deift} (J.~Baik, T.~Kriecherbauer, L.-C. Li,
  K.~T.-R. McLaughlin, and C.~Tomei, eds.), Contemporary Math., AMS, 2008,
  arXiv:math-ph/0701021, pp.~321--332.

\bibitem[FS03]{ferrari2003step}
P.~Ferrari and H.~Spohn, \emph{{Step fluctuations for a faceted crystal}}, J.
  Stat. Phys \textbf{113} (2003), no.~1, 1--46, arXiv:cond-mat/0212456
  [cond-mat.stat-mech].

\bibitem[Gho17]{Ghosal2017KPZ}
P.~Ghosal, \emph{{Hall-Littlewood-PushTASEP and its KPZ limit}}, arXiv preprint
  (2017), arXiv:1701.07308 [math.PR].

\bibitem[GKS10]{Seppalainen_Discont_TASEP_2010}
N.~Georgiou, R.~Kumar, and T.~Sepp{{\"a}}l{{\"a}}inen, \emph{T{ASEP} with
  discontinuous jump rates}, ALEA Lat. Am. J. Probab. Math. Stat. \textbf{7}
  (2010), 293--318, arXiv:1003.3218 [math.PR].

\bibitem[GS92]{GwaSpohn1992}
L.-H. Gwa and H.~Spohn, \emph{Six-vertex model, roughened surfaces, and an
  asymmetric spin {H}amiltonian}, Phys. Rev. Lett. \textbf{68} (1992), no.~6,
  725--728.

\bibitem[Gui97]{guiol1997resultat}
H.~Guiol, \emph{Un r\'esultat pour le processus d'exclusion \`a longue port\'ee
  [a result for the long-range exclusion process]}, Annales de l'Institut Henri
  Poincare (B) Probability and Statistics, vol.~33, 1997, pp.~387--405.

\bibitem[HKPV06]{peres2006determinantal}
J.B. Hough, M.~Krishnapur, Y.~Peres, and B.~Vir{\'a}g, \emph{{Determinantal
  processes and independence}}, Probability Surveys \textbf{3} (2006),
  206--229, arXiv:math/0503110 [math.PR].

\bibitem[Joh00]{johansson2000shape}
K.~Johansson, \emph{{Shape fluctuations and random matrices}}, Commun. Math.
  Phys. \textbf{209} (2000), no.~2, 437--476, arXiv:math/9903134 [math.CO].

\bibitem[KPS19]{SaenzKnizelPetrov2018}
A.~Knizel, L.~Petrov, and A.~Saenz, \emph{Generalizations of tasep in discrete
  and continuous inhomogeneous space}, Commun. Math. Phys. \textbf{372} (2019),
  797--864, arXiv:1808.09855 [math.PR].

\bibitem[Lan96]{Landim1996hydrodynamics}
C.~Landim, \emph{Hydrodynamical limit for space inhomogeneous one-dimensional
  totally asymmetric zero-range processes}, Ann. Probab. \textbf{24} (1996),
  no.~2, 599--638.

\bibitem[Lig05]{Liggett1985}
T.~Liggett, \emph{{Interacting Particle Systems}}, Springer-Verlag, Berlin,
  2005.

\bibitem[Mac95]{Macdonald1995}
I.G. Macdonald, \emph{Symmetric functions and {H}all polynomials}, 2nd ed.,
  Oxford University Press, 1995.

\bibitem[MG69]{MacdonaldGibbsASEP1969}
C.~MacDonald and J.~Gibbs, \emph{Concerning the kinetics of polypeptide
  synthesis on polyribosomes}, Biopolymers \textbf{7} (1969), no.~5, 707--725.

\bibitem[MGP68]{macdonald1968bioASEP}
C.~MacDonald, J.~Gibbs, and A.~Pipkin, \emph{Kinetics of biopolymerization on
  nucleic acid templates}, Biopolymers \textbf{6} (1968), no.~1, 1--25.

\bibitem[O'C03a]{OConnell2003Trans}
N.~O'Connell, \emph{{A path-transformation for random walks and the
  Robinson-Schensted correspondence}}, Trans. AMS \textbf{355} (2003), no.~9,
  3669--3697.

\bibitem[O'C03b]{OConnell2003}
\bysame, \emph{{Conditioned random walks and the RSK correspondence}}, J. Phys.
  A \textbf{36} (2003), no.~12, 3049--3066.

\bibitem[Oko01]{okounkov2001infinite}
A.~Okounkov, \emph{{Infinite wedge and random partitions}}, Selecta Math.
  \textbf{7} (2001), no.~1, 57--81, arXiv:math/9907127 [math.RT].

\bibitem[Oko02]{Okounkov2002}
\bysame, \emph{Symmetric functions and random partitions}, Symmetric functions
  2001: Surveys of Developments and Perspectives (S.~Fomin, ed.), Kluwer
  Academic Publishers, 2002, arXiv:math/0309074 [math.CO].

\bibitem[OP13]{OConnellPei2012}
N.~O'Connell and Y.~Pei, \emph{{A q-weighted version of the Robinson-Schensted
  algorithm}}, Electron. J. Probab. \textbf{18} (2013), no.~95, 1--25,
  arXiv:1212.6716 [math.CO].

\bibitem[OR03]{okounkov2003correlation}
A.~Okounkov and N.~Reshetikhin, \emph{{Correlation function of Schur process
  with application to local geometry of a random 3-dimensional Young diagram}},
  Jour. AMS \textbf{16} (2003), no.~3, 581--603, arXiv:math/0107056 [math.CO].

\bibitem[PS02]{PhahoferSpohn2002}
M.~Pr{\"a}hofer and H.~Spohn, \emph{{Scale invariance of the PNG droplet and
  the Airy process}}, J. Stat. Phys. \textbf{108} (2002), 1071--1106,
  arXiv:math.PR/0105240.

\bibitem[QS15]{QuastelSpohnKPZ2015}
J.~Quastel and H.~Spohn, \emph{{The one-dimensional KPZ equation and its
  universality class}}, J. Stat. Phys \textbf{160} (2015), no.~4, 965--984,
  arXiv:1503.06185 [math-ph].

\bibitem[Rez91]{Rezakhanlou1991hydrodynamics}
F.~Rezakhanlou, \emph{{Hydrodynamic limit for attractive particle systems on
  $Z^d$}}, Commun. Math. Phys. \textbf{140} (1991), no.~3, 417--448.

\bibitem[Ros81]{Rost1981}
H.~Rost, \emph{Nonequilibrium behaviour of a many particle process: density
  profile and local equilibria}, Z. Wahrsch. Verw. Gebiete \textbf{58} (1981),
  no.~1, 41--53.

\bibitem[RS05]{rakos2005current}
A.~R{\'a}kos and G.~Sch{\"u}tz, \emph{Current distribution and random matrix
  ensembles for an integrable asymmetric fragmentation process}, J. Stat. Phys
  \textbf{118} (2005), no.~3-4, 511--530, arXiv:cond-mat/0405464
  [cond-mat.stat-mech].

\bibitem[RT08]{rolla2008last}
L.~Rolla and A.~Teixeira, \emph{Last passage percolation in macroscopically
  inhomogeneous media}, Electron. Commun. Probab. \textbf{13} (2008), 131--139.

\bibitem[Sep99]{seppalainen1999existence}
T.~Sepp\"{a}l\"{a}inen, \emph{Existence of hydrodynamics for the totally
  asymmetric simple k-exclusion process}, {Ann. Probab.} \textbf{27} (1999),
  no.~1, 361--415.

\bibitem[Sim05]{Simon-trace-ideals}
B.~Simon, \emph{Trace ideals and their applications, second edition},
  Mathematical Surveys and Monographs, vol. 120, AMS, 2005.

\bibitem[Sos00]{Soshnikov2000}
A.~Soshnikov, \emph{Determinantal random point fields}, Russian Mathematical
  Surveys \textbf{55} (2000), no.~5, 923--975, arXiv:math/0002099 [math.PR].

\bibitem[Spi70]{Spitzer1970}
F.~Spitzer, \emph{{Interaction of Markov processes}}, Adv. Math. \textbf{5}
  (1970), no.~2, 246--290.

\bibitem[TW93]{tracy1993level}
C.~Tracy and H.~Widom, \emph{{Level-spacing distributions and the Airy
  kernel}}, Physics Letters B \textbf{305} (1993), no.~1, 115--118,
  arXiv:hep-th/9210074.

\bibitem[TW94]{tracy_widom1994level_airy}
\bysame, \emph{{Level-spacing distributions and the Airy kernel}}, Commun.
  Math. Phys. \textbf{159} (1994), no.~1, 151--174, arXiv:hep-th/9211141.

\bibitem[VK86]{Vershik1986}
A.~Vershik and S.~Kerov, \emph{The characters of the infinite symmetric group
  and probability properties of the {R}obinson-{S}hensted-{K}nuth algorithm},
  SIAM J. Alg. Disc. Math. \textbf{7} (1986), no.~1, 116--124.

\end{thebibliography}

\bigskip

\textsc{Department of Mathematics, University of Virginia, Charlottesville, VA 22904, and Institute for Information Transmission Problems,
Moscow, Russia 127051}

\textit{E-mail address:} \texttt{lenia.petrov@gmail.com}

\end{document}